\documentclass[11pt,a4paper]{article}
\usepackage{etex}
\usepackage{inputenc}
\usepackage[english]{babel}
\usepackage{lmodern}					
\usepackage{textcomp}				
\usepackage{verbatim}			
\usepackage[toc,page]{appendix} 		
\usepackage{cite}
\usepackage{amsmath,					
			amsthm,
			amsfonts,
			amssymb, 
			mathtools}
\usepackage{stmaryrd}
\usepackage{physics}
\usepackage{enumitem}
\usepackage{hyperref}
\hypersetup{
colorlinks=true,
linkcolor=purple,
filecolor=magenta,
urlcolor=cyan,
citecolor=blue
}

 \usepackage{tikz-cd} 
\usetikzlibrary{tikzmark}
\usetikzlibrary{arrows}
 \usepackage{verbatim}
\definecolor{DarkGreen}{HTML}{1cad22}
\usepackage[hmargin=3cm,vmargin=2.5cm]{geometry}
\usepackage{graphicx} 
\usepackage{amsthm, amssymb, amsmath}
\usepackage{color}
\usepackage{xcolor}
\usepackage{tikz}
\usetikzlibrary{positioning}
\usepackage{tikz-cd}
\usepackage{amsmath}
\usepackage{amssymb}
\usepackage{float}
\usepackage{caption}
\usepackage{mathtools}
\usepackage{hyperref}
\usepackage{mathrsfs}

\usepackage{graphicx}
\usepackage{epsfig}
\newcommand{\HH}{\ensuremath{\mathbb H}}

\numberwithin{equation}{section}

{\theoremstyle{definition}\newtheorem{definition}{Definition}[section]

\newtheorem{defititle}[definition]{\Title}

\newtheorem{remark}[definition]{Remark}
\newtheorem{example}[definition]{Example}

}
\newtheorem{prop}[definition]{Proposition}
\newtheorem{proposition-definition}[definition]{Proposition-Definition}
\newtheorem{lemma}[definition]{Lemma}
\newtheorem{thm}[definition]{Theorem}
\newtheorem{cor}[definition]{Corollary}

\newtheorem*{theorem}{Theorem}

\newtheoremstyle{named}{}{}{\itshape}{}{\bfseries}{.}{.5em}{\thmnote{#3's }#1}
\theoremstyle{named}

\usepackage{amsthm}

\newenvironment{mainthm-env}[1]
  {\begin{theorem}[Main Theorem #1]}
  {\end{theorem}}

\title{Equivariant rigidity of complex and quaternionic moment--angle manifolds}

\author{Ioannis Gkeneralis\footnote{Department of Mathematics, Aristotle University of Thessaloniki, \href{mailto:igkeneralis@math.auth.gr}{igkeneralis@math.auth.gr}}}

\date{ }

\newcommand{\keywords}[1]{\textbf{Keywords: } #1}
\newcommand{\msc}[1]{\textbf{Mathematics Subject Classification: } #1}

\begin{document}

\vspace{-0.5cm}

\maketitle

\begin{abstract}
We investigate equivariant rigidity properties of complex and quaternionic moment--angle manifolds. By reducing the classification problem to the equivariant rigidity of their quasitoric or quoric quotients and by using the associated principal bundle structures, we establish rigidity results within the category of locally linear actions. We prove that complex moment--angle manifolds are equivariantly rigid up to homeomorphism: any closed locally linear manifold equivariantly homotopy equivalent to a complex moment--angle manifold is equivariantly homeomorphic to it. In the quaternionic setting, under Hopkinson's globality condition on the characteristic data, we obtain the analogous equivariant homeomorphism rigidity for quaternionic moment--angle manifolds. These results show that, in the equivariant category considered here, the equivariant homotopy type of a moment--angle manifold determines its equivariant homeomorphism type.
\end{abstract}

\noindent \keywords{moment--angle manifold, quasitoric manifold, quoric manifold, equivariant rigidity, polyhedral product, principal torus bundle} \\[1ex]
\msc[2020]{Primary  57S25; Secondary 57R19, 57R18, 55U10 }

\vspace{0.5ex}
\noindent \textbf{Conflict of Interest and Data Availability Statement:} The author states that there is no conflict of interest to declare. Moreover, data sharing is not applicable to this article as no datasets were generated or analyzed during the research carried out in this paper.

\tableofcontents

\section{Introduction}

Rigidity phenomena play a central role in geometric topology. The general question is to what extent the homotopy type of a manifold determines its homeomorphism or diffeomorphism type. Classical examples include the Borel Conjecture for aspherical manifolds and rigidity results related to the Farrell--Jones Conjecture \cite{luckreich}. When manifolds carry additional geometric or combinatorial structures, it is natural to ask analogous rigidity questions in equivariant or stratified settings.

\medskip

Toric topology provides a particularly rich framework for studying such questions. In their foundational work, Davis and Januszkiewicz \cite{dj} introduced quasitoric manifolds as topological counterparts of toric varieties. A quasitoric manifold is a smooth $2n$--manifold equipped with a locally standard action (coordinatewise multiplication) of the torus $T^n$ whose orbit space is a simple polytope. The topology of these manifolds is closely tied to the combinatorics of the polytope together with an associated characteristic function. Rigidity questions for quasitoric manifolds have been studied extensively (see \cite{dav2}  or \cite{Wiemeler} and references therin); in particular, Metaftsis and Prassidis \cite{mp} proved that quasitoric manifolds satisfy equivariant topological rigidity within the class of locally linear torus manifolds.

\medskip

Closely related to quasitoric manifolds are the moment--angle manifolds introduced by Davis and Januszkiewicz \cite{dj} and further developed by Buchstaber and Panov \cite{bp}. Given a simple polytope $P$ with $m$ facets, the associated moment--angle manifold $\mathcal Z_P$ carries a natural action of the torus $T^m$. These manifolds admit several equivalent descriptions: they can be realized as intersections of real quadrics, and more conceptually as polyhedral products $(D^2,S^1)^K$ determined by the combinatorics of the nerve complex $K$ of $P$ (see \cite{bbcg,bp}). A fundamental feature of moment--angle manifolds is the existence of a free action of a subtorus
\[
K \cong (S^1)^{m-n},
\]
whose quotient is a quasitoric manifold over $P$. Thus moment--angle manifolds arise as total spaces of principal torus bundles over quasitoric manifolds and play a central role in the interaction between torus actions \cite{bp}, polyhedral geometry \cite{bbcg}, and homotopy theory \cite{gbricTheriault}.

\medskip

More recently, quaternionic analogues of these constructions have been studied. Replacing the complex torus $(S^1)^n$ with the quaternionic torus $(S^3)^n$, Hopkinson \cite{ho} introduced quaternionic moment--angle manifolds together with their quotients, called \emph{quoric manifolds}, which serve as quaternionic analogues of quasitoric manifolds. These manifolds admit locally regular actions (conjugation) of $(S^3)^n$ whose orbit spaces are simple polytopes. Their geometry reflects the representation theory of $S^3$ and parallels many features of the classical complex toric setting. Rigidity questions in this quaternionic context have also been investigated: Prassidis and the author \cite{gp} proved that quoric manifolds satisfy equivariant topological rigidity.

\medskip

The purpose of this paper is to study rigidity properties of moment--angle manifolds from this perspective. More precisely, we ask whether a manifold that is equivariantly homotopy equivalent to a moment--angle manifold must be equivariantly homeomorphic to it. We have two situations in mind; the classical complex moment--angle manifolds and their quaternionic analogues.

\medskip

Our main results establish equivariant homeomorphism-type rigidity for moment--angle manifolds in both the complex and quaternionic settings. We formulate the result in a unified way. Let \(G=T^m\) in the complex case and \(G=Q^m=(S^3)^m\) in the quaternionic case. Let \(\mathcal Z\) denote \(\mathcal Z_P\) in the complex case and \(\mathcal Z_P^{\mathbb H}\) in the quaternionic case.

The result depends on the choice of characteristic data giving a freely acting kernel subgroup \(K\subset G\) such that the quotient space \(\mathcal Z/K\) is a quasitoric manifold in the complex case and a quoric manifold in the quaternionic case. In the quaternionic case we assume, following Hopkinson \cite{ho}, that the characteristic data are \emph{global}. In particular, we prove the following result.

\begin{theorem}[Theorem~\ref{thm:main_rigidity}]
Let \(\mathcal Z\) be a complex or quaternionic moment--angle manifold associated to a simple polytope \(P^n\), equipped with its canonical \(G\)-action. Suppose that \(P\) admits characteristic data as above, with global characteristic data in the quaternionic case. If \(N\) is a closed locally linear \(G\)-manifold and \(f:N\longrightarrow \mathcal Z \) is a \(G\)-equivariant homotopy equivalence, then \(N\) is \(G\)-equivariantly homeomorphic to \(\mathcal Z\).
\end{theorem}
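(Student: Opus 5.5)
The plan is to reduce the statement to the known rigidity of the quotients: Metaftsis--Prassidis \cite{mp} in the complex case and \cite{gp} in the quaternionic case. Then lift the resulting homeomorphism back up through the principal $K$-bundles.

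\textbf{Step 1: restrict to $K$ and pass to quotients.} Fix the characteristic data and the freely acting subgroup $K\subset G$, so that $M=\mathcal Z/K$ is a quasitoric (resp.\ quoric) manifold with residual action of $G/K\cong T^n$ (resp.\ $Q^n$). Restrict the $G$-action on $N$ to $K$. Since $f$ is $G$-equivariant, for every $x\in N$ we have $G_x\subseteq G_{f(x)}$, and $K\cap G_{f(x)}=1$. Hence $K$ acts freely on $N$. Because $N$ is closed and locally linear, $N\to N/K$ is a principal $K$-bundle and $N/K$ is a closed topological manifold. The $G/K$-action on $N/K$ is locally linear, since the slices of the $G$-action descend.

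\textbf{Step 2: the induced map is an equivariant homotopy equivalence.} Let $\bar f:N/K\to M$ be the induced $G/K$-map. I must check that it is a $G/K$-homotopy equivalence. A $G$-homotopy inverse $g$ of $f$, and the $G$-homotopies $gf\simeq \mathrm{id}$ and $fg\simeq\mathrm{id}$, are in particular $K$-equivariant, so they all descend to the $K$-orbit spaces. This gives a $G/K$-homotopy inverse of $\bar f$. Alternatively, compare fixed sets: $(N/K)^{H/K}$ is the image of the union over $H'$ with $H'K=H$. I will use the first argument, which is formal.

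I also need $N/K$ to lie in the class where the cited rigidity applies. In the complex case that class is the locally linear torus manifolds of \cite{mp}. In the quaternionic case it is the class of locally linear $Q^n$-manifolds of \cite{gp}, and here the globality hypothesis is needed. Applying \cite{mp} (resp.\ \cite{gp}) then yields a $G/K$-homeomorphism $h:N/K\to M$ that is $G/K$-homotopic to $\bar f$.

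\textbf{Step 3: lift to a $G$-homeomorphism $N\to\mathcal Z$.} This is the main obstacle. The bundles $N\to N/K$ and $h^*\mathcal Z\to N/K$ are principal $K$-bundles, and they are classified by maps to $BK$. Since $h\simeq\bar f$ and $\bar f$ is covered by the $K$-map $f$, we get $N\cong \bar f^*\mathcal Z\cong h^*\mathcal Z$ as principal $K$-bundles. However, I need this isomorphism to be $G$-equivariant, not merely $K$-equivariant. For this I would use that the bundles are $G$-equivariant principal $K$-bundles; when $K$ is central, as it is for the torus, these are classified by $G/K$-equivariant maps to a classifying space $B_{G/K}K$. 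The equivariant homotopy $h\simeq_{G/K}\bar f$ then gives a $G$-isomorphism $N\cong h^*\mathcal Z$. Composing with the canonical map $h^*\mathcal Z\to\mathcal Z$ covering $h$ produces a $G$-homeomorphism.

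In the quaternionic case $K\subset Q^m$ need not be central. There I would instead use Hopkinson's description of $\mathcal Z^{\HH}_P$ as an explicit construction over $P\times Q^m$ with identifications along faces, which is the analogue of the Davis--Januszkiewicz model. I would show that $N$ admits the same model. The $G$-homeomorphism $h$ identifies $N/K$ with the quoric model and thereby identifies orbit data and isotropy over each face. Global characteristic data then guarantee that the lifted pieces glue consistently. A cheaper fallback, if available, is to build the $G$-map directly: $\mathcal Z\cong (P\times G)/\!\sim$, and a section of $N\to N/G\cong P$ exists because $P$ is contractible.
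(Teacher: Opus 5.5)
Your Steps 1 and 2, and Step 3 in the complex case, follow the paper's route. That route is: freeness of $K$, descent of $f$ to a $G/K$-homotopy equivalence $\bar f$, rigidity of the quotient, the identification $N\cong_G\bar f^{*}\mathcal Z$ via $(q_N,f)$, equivariant homotopy invariance of pullbacks, and $h^{*}\mathcal Z\cong_G\mathcal Z$. Your freeness argument via $G_x\subseteq G_{f(x)}$ and your descent of a $G$-homotopy inverse are, if anything, more direct than the paper's.

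The gap is the quaternionic half of Step 3. You claim that the $G$-equivariant comparison $\bar f^{*}\mathcal Z\cong_G h^{*}\mathcal Z$ needs $K$ to be central, and you replace it with a sketch that is not a proof. ``I would show that $N$ admits the same model'' is the whole remaining difficulty, and it is left undone. The fallback also fails as stated. First, $N\to N/G$ is not a fibre bundle (isotropy jumps over $\partial P$), so contractibility of $P$ alone does not produce a section. Second, even given a section $s$, the $G/K$-homeomorphism $h$ only tells you $G_{s(p)}K=Q^{I_p}K$. It does not give $G_{s(p)}=Q^{I_p}$, which is what a $Q^m$-map $(P\times Q^m)/\!\sim\;\to N$ would require.

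The missing observation is that centrality plays no role; normality suffices, and $K=\ker\Lambda$ is automatically a closed normal subgroup of $Q^m$. Then $q:\mathcal Z^{\mathbb H}_P\to M$ is a $Q^m$-space that is a principal $K$-bundle over a $Q^m/K$-space. That is, it is a principal $(K;Q^m)$-bundle in the sense of Lashof--May \cite{lashofmay}. Their theory covers arbitrary extensions $1\to K\to G\to G/K\to 1$ of compact Lie groups. It gives homotopy invariance of pullbacks along $G/K$-homotopic maps over paracompact (here compact) bases. This is exactly the paper's Lemma~\ref{lem:equiv_pullback}, applied uniformly in both cases. With it, your complex-case argument goes through verbatim for $G=Q^m$, and the detour through Hopkinson's model is unnecessary.

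Relatedly, globality is not what you need in order to invoke \cite{gp}. It is what produces $K$ and the bundle $\mathcal Z^{\mathbb H}_P\to M$ in the first place (Proposition~\ref{prop:ma_quoric_quotient}).
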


Thus the equivariant homotopy type of \(\mathcal Z\), within the class of closed locally linear \(G\)-manifolds, determines its equivariant homeomorphism type. We emphasize that this is a homeomorphism-type rigidity statement. We do not claim that the given map \(f\) is \(G\)-equivariantly homotopic to a homeomorphism.

As a consequence, the complex and quaternionic cases give the following results.

\begin{theorem}[Complex case~\ref{cor:rigidity_complex}]
Let \(P^n\) be a simple polytope equipped with characteristic data giving a free kernel action on \(\mathcal Z_P\) whose quotient is a quasitoric manifold. If \(N\) is a closed locally linear \(T^m\)-manifold and \(f:N\longrightarrow \mathcal Z_P\) is a \(T^m\)-equivariant homotopy equivalence, then \(N\) is \(T^m\)-equivariantly homeomorphic to \(\mathcal Z_P\).
\end{theorem}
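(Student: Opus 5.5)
This statement is the specialization of the unified Theorem~\ref{thm:main_rigidity} to $G=T^m$ and $\mathcal Z=\mathcal Z_P$. If that theorem is available, the proof is a one-line citation. Below I sketch the argument I would give for the complex case directly. The strategy is to pass to the quotient by the freely acting kernel subtorus $K\cong T^{m-n}$, apply the equivariant rigidity of quasitoric manifolds due to Metaftsis--Prassidis \cite{mp}, and then recover the moment--angle manifold as the total space of a principal $K$-bundle.

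\textbf{Step 1: $K$ acts freely on $N$.} Since $f:N\to\mathcal Z_P$ is $T^m$-equivariant, for every $x\in N$ the isotropy group satisfies $(T^m)_x\subseteq (T^m)_{f(x)}$. The characteristic data are chosen so that $K\cap (T^m)_z=1$ for all $z\in\mathcal Z_P$. Hence $K\cap (T^m)_x=1$, so $K$ acts freely on $N$.

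\textbf{Step 2: the quotient is a torus manifold.} Because $N$ is closed and locally linear and the $K$-action is free, $N/K$ is a closed topological $2n$-manifold. It carries a locally linear action of $T^m/K\cong T^n$, and $N\to N/K$ is a principal $K$-bundle. The induced map $\bar f:N/K\to M:=\mathcal Z_P/K$ is $T^n$-equivariant.

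\textbf{Step 3: $\bar f$ is a $T^n$-homotopy equivalence.} Let $g$ be a $T^m$-homotopy inverse of $f$, with $T^m$-homotopies witnessing $fg\simeq 1$ and $gf\simeq 1$. All of these data descend to $K$-orbit spaces. Therefore $\bar f$ is a $T^n$-equivariant homotopy equivalence onto the quasitoric manifold $M$. In particular, the fixed-point and orbit-type structure of $N/K$ matches that of $M$, so $N/K$ lies in the class of locally linear torus manifolds to which \cite{mp} applies. Checking that every hypothesis of \cite{mp} holds is the step I expect to be the main obstacle. Concretely, one must verify that the orbit space $(N/K)/T^n$ is a homology polytope combinatorially equivalent to $P$, with the same characteristic function. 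I would argue this by comparing the fixed sets $(N/K)^H$ and $M^H$ for each isotropy subgroup $H$. The restrictions of $\bar f$ to these fixed sets are homotopy equivalences, so the face structures of the two orbit spaces agree.

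\textbf{Step 4: apply rigidity and identify the bundles.} By \cite{mp} there is a $T^n$-equivariant homeomorphism $h:N/K\to M$. To lift $h$ to $N\to\mathcal Z_P$ it suffices to show that $N$ and $h^*\mathcal Z_P$ are isomorphic as principal $K$-bundles over $N/K$ compatibly with the $T^m$-actions. The bundle $\mathcal Z_P\to M$ is classified by its Chern classes $c_1\in H^2(M;\mathbb Z^{m-n})$. Since $N\cong \bar f^*\mathcal Z_P$ as principal $K$-bundles, $N$ has classes $\bar f^*c_1$. The main subtlety is that $h$ need not be homotopic to $\bar f$. I would handle this using the description of the classes intrinsic to the equivariant structure. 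By Davis--Januszkiewicz, $H^2(M)$ is generated by the duals of the characteristic submanifolds, and $c_1(\mathcal Z_P)$ is determined by the characteristic matrix, that is, by which facets are assigned to which coordinate circles. Both $h$ and $\bar f$ preserve the characteristic submanifolds together with their labels: $h$ does so as an equivariant homeomorphism over the same polytope, and $\bar f$ does so by Step 3. Hence $h^*c_1=\bar f^*c_1$. The bundles are therefore isomorphic via some $\tilde h:N\to\mathcal Z_P$ covering $h$. Since $T^m$ is abelian, the $K$-equivariant lift can be chosen $T^m$-equivariant, giving the desired $T^m$-homeomorphism $N\cong\mathcal Z_P$.
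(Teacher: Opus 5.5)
Steps 1--3 are sound. Step 1, via \((T^m)_x\subseteq (T^m)_{f(x)}\), is more direct than the paper's fixed-point argument, and descending a \(T^m\)-homotopy inverse does give a \(T^n\)-homotopy equivalence \(\bar f\). The gap is in Step 4.

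You set aside the fact that the rigidity theorem produces an \(h\) that is \(T^n\)-homotopic to \(\bar f\); this is exactly what Theorem~\ref{thm:top_rigidity_quasitoric}, as used in the paper, provides. In its place you claim that any \(T^n\)-homeomorphism \(h\) preserves the characteristic submanifolds with their labels, so that \(h^*c_1=\bar f^*c_1\). This is false. Take \(P=\Delta^1\) and \(M=\mathbb{CP}^1\), with \(T^1\) acting by \([z_1:z_2]\mapsto[tz_1:z_2]\). The map \(h[z_1:z_2]=[\bar z_2:\bar z_1]\) is \(T^1\)-equivariant. It swaps the two facets, which both have isotropy all of \(T^1\) because \(\lambda(F_2)=-\lambda(F_1)\). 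It also satisfies \(h^*c_1=-c_1\). Now take \(N=\mathcal Z_P=S^3\) and \(f=\mathrm{id}\): the principal bundles \(N\) and \(h^*\mathcal Z_P\) over \(N/K\) are not isomorphic. So Step 4 cannot be run with an arbitrary equivariant homeomorphism \(h\).

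Second, even granting equal ordinary Chern classes, the sentence ``since \(T^m\) is abelian, the \(K\)-equivariant lift can be chosen \(T^m\)-equivariant'' is wrong. Let \(\alpha(t_1,t_2)=(t_1^2t_2^{-1},t_1)\). This is an automorphism of \(T^2\) that is the identity on the diagonal \(K\) and induces the identity on \(T^2/K\). Then \(S^3\) with the action \(g\ast z=\alpha(g)z\) has the same principal \(K\)-bundle and the same \(T^1\)-base as \(\mathcal Z_{\Delta^1}\). Yet it has the isotropy group \(\{(t,t^2)\}\), so it is not \(T^2\)-homeomorphic to \(\mathcal Z_{\Delta^1}\). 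The \(T^m\)-structure is recorded by an equivariant class in \(H^2_{T^n}(N/K;\mathbb Z^{m-n})\), not by \(c_1\in H^2(N/K;\mathbb Z^{m-n})\).

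The paper's route avoids both problems. Since \(h\simeq_{T^n}\bar f\), equivariant homotopy invariance of pullbacks of numerable principal bundles (Lemma~\ref{lem:equiv_pullback}, via \cite{lashofmay}) gives \(\bar f^*\mathcal Z_P\cong_{T^m}h^*\mathcal Z_P\). Combine this with \(N\cong_{T^m}\bar f^*\mathcal Z_P\), via \(z\mapsto(q_N(z),f(z))\), and \(h^*\mathcal Z_P\cong_{T^m}\mathcal Z_P\), via \(\mathrm{pr}_2\). This finishes the proof with no Chern class computation. If you want to avoid the homotopy \(h\simeq\bar f\), you would need two further things:
\begin{itemize}
\item a comparison of equivariant Chern classes, for example by localizing at the \(T^n\)-fixed points, where equivariance of \(f\) pins down the isotropy groups;
\item control over how \(h\) permutes facets relative to \(\bar f\).
\end{itemize}
The proposal does neither.
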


\begin{theorem}[Quaternionic case~\ref{cor:rigidity_quaternionic}]
Let \(P^n\) be a simple polytope admitting a global characteristic pair. If \(N\) is a closed locally linear \(Q^m\)-manifold and \(f:N\longrightarrow \mathcal Z_P^{\mathbb H}\) is a \(Q^m\)-equivariant homotopy equivalence, then \(N\) is \(Q^m\)-equivariantly homeomorphic to \(\mathcal Z_P^{\mathbb H}\).
\end{theorem}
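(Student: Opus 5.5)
The plan is to reduce to the rigidity of the quotient and then lift through the principal bundle, in the way the abstract describes. Choose the characteristic data, so that a subgroup $K\subset Q^m$ with $K\cong (S^3)^{m-n}$ acts freely on $\mathcal Z_P^{\mathbb H}$. Set $M=\mathcal Z_P^{\mathbb H}/K$. By Hopkinson's globality hypothesis, $M$ is a quoric manifold with a locally regular action of $Q^m/K\cong Q^n$. This is the place where globality is needed: it ensures that $K$ is actually a subgroup and that the quotient torus action is well defined, since $S^3$ is nonabelian and a general characteristic function does not give a homomorphism.

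Given $f:N\to\mathcal Z_P^{\mathbb H}$, a $Q^m$-equivariant homotopy equivalence, the first step is to show that $K$ acts freely on $N$. For $x\in N$, the isotropy group satisfies $(Q^m)_x\subseteq (Q^m)_{f(x)}$, and $(Q^m)_{f(x)}\cap K=1$. Hence $K$ acts freely on $N$, and since the action is locally linear, $N\to N/K$ is a principal $K$-bundle over a closed manifold. The induced map $\bar f:N/K\to M$ is $Q^n$-equivariant. It is also an equivariant homotopy equivalence: take an equivariant homotopy inverse $g$ and the equivariant homotopies, all of which commute with $K$, and pass them to quotients. Next I check that $N/K$ is a closed locally linear $Q^n$-manifold. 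Local linearity descends along free actions by the slice theorem. Then the quoric rigidity theorem of \cite{gp} (in the complex case, that of Metaftsis--Prassidis \cite{mp} for quasitoric manifolds) gives a $Q^n$-equivariant homeomorphism $h:N/K\to M$.

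The remaining step, which I expect to be the main obstacle, is lifting $h$ to a $Q^m$-equivariant homeomorphism of total spaces. Both $N\to N/K$ and $\mathcal Z_P^{\mathbb H}\to M$ are principal $K$-bundles carrying compatible actions of the full group $Q^m$ (in the nonabelian case $K$ must be normal, which globality should give). They are classified equivariantly: $Q^m$-equivariant principal $K$-bundles over a $Q^n$-space correspond to maps into an equivariant classifying space. Since $\bar f$ is covered by the bundle map $f$, the bundle $N$ is the pullback $\bar f^*\mathcal Z_P^{\mathbb H}$ up to equivariant isomorphism. So $N\cong \bar f^*\mathcal Z_P^{\mathbb H}$, and it suffices to show $h^*\mathcal Z_P^{\mathbb H}\cong\bar f^*\mathcal Z_P^{\mathbb H}$ equivariantly. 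The difficulty is that $h$ need not be equivariantly homotopic to $\bar f$. To handle this I would use $h\circ\bar f^{-1}$, a self-equivalence of $M$. By the classification of quoric (resp. quasitoric) manifolds via characteristic data, the bundle $\mathcal Z\to M$ is determined by $P$ and the characteristic pair, equivalently by the facet characteristic submanifolds. An equivariant homeomorphism $h$ covers the identity of $P$ up to a combinatorial automorphism preserving the characteristic data. Such an $h$ pulls back the canonical bundle to one isomorphic to it, after reindexing coordinates of $Q^m$ by the induced facet permutation. This may require composing with an automorphism of $Q^m$; if it does, I would argue that $h$ can be chosen over the identity of $P$.

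A cleaner alternative is to bypass $h$ and build the homeomorphism directly. $N$ is a locally linear $Q^m$-manifold whose orbit space $N/Q^m\cong M/Q^n\cong P$ has isotropy groups identical to those of $\mathcal Z$ over corresponding faces. I would then apply the Davis--Januszkiewicz/Hopkinson reconstruction $\mathcal Z\cong (Q^m\times P)/\!\sim$, after checking that $N$ admits a section over $P$. Such a section exists because $P$ is contractible and the bundle over the interior is trivial. This yields the equivariant homeomorphism. I would use this to finish the argument if the bundle-comparison argument becomes delicate.
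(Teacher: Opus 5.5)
Your reduction steps are sound. The freeness argument via $(Q^m)_x\subseteq (Q^m)_{f(x)}$ and $(Q^m)_{f(x)}\cap K=1$ is simpler than the paper's fixed-point argument, and your identification $N\cong_{Q^m}\bar f^*\mathcal Z_P^{\mathbb H}$ matches the paper's pullback proposition.

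The gap is in the last step, and you created it yourself. You take from the quoric rigidity theorem only \emph{some} $Q^n$-homeomorphism $h:N/K\to M$, and then say $h$ ``need not be equivariantly homotopic to $\bar f$''. But the theorem of \cite{gp}, as cited (Theorem~\ref{thm:top_rigidity_quoric}), is the strong form: $\bar f$ itself is $Q^n$-equivariantly homotopic to a $Q^n$-homeomorphism $h$.

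Without that homotopy, nothing in your proposal connects $\bar f^*\mathcal Z_P^{\mathbb H}$ to $h^*\mathcal Z_P^{\mathbb H}$, and this is the only nontrivial comparison. The other one, $h^*\mathcal Z_P^{\mathbb H}\cong_{Q^m}\mathcal Z_P^{\mathbb H}$, holds for any equivariant homeomorphism via $\mathrm{pr}_2$ and needs no combinatorics.

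Workaround (a) does not supply the missing comparison. Here $\bar f^{-1}$ can only mean a homotopy inverse, so $h\circ\bar f^{-1}$ is an equivariant self-homotopy equivalence of $M$, not a homeomorphism. The discussion of facet permutations and automorphisms of $Q^m$ therefore does not apply to it. What you would actually need is that pulling back along this self-equivalence preserves the $Q^m$-equivariant bundle $\mathcal Z_P^{\mathbb H}\to M$. That is exactly what an equivariant homotopy provides, and it is what you set aside.

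Workaround (b) also has real holes.
\begin{enumerate}
\item The map $f$ only gives $G_x\subseteq G_{f(x)}$. The homeomorphism $h$ only shows that $G_x$ is a complement to $K$ in the preimage of a $Q^n$-isotropy group of $M$. Complements are not unique even in the torus case: every $\{(t^{a+1},t^a)\}\subset T^2$ is a complement to the diagonal circle. So the claim that $N$ has the coordinate isotropy groups $Q^{I}$ over the ``corresponding faces'' is unproven, and so is the claim that its slice representations are standard.
\item A section over the interior of $P$ does not automatically extend to all of $P$. Producing a global section compatible with isotropy is the content of the Davis--Januszkiewicz/Hopkinson reconstruction. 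That reconstruction requires locally regular local models for $N$, which you have not established.
\end{enumerate}

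The repair is short and is the paper's argument:
\begin{enumerate}
\item Take $h$ to be the homeomorphism that Theorem~\ref{thm:top_rigidity_quoric} provides, which is $Q^n$-equivariantly homotopic to $\bar f$.
\item Pull back $\mathcal Z_P^{\mathbb H}\to M$ along that equivariant homotopy $\bar N\times I\to M$.
\item Apply equivariant homotopy invariance of numerable principal bundles \cite{lashofmay} to get $\bar f^*\mathcal Z_P^{\mathbb H}\cong_{Q^m} h^*\mathcal Z_P^{\mathbb H}$.
\item Conclude $N\cong_{Q^m}\bar f^*\mathcal Z_P^{\mathbb H}\cong_{Q^m}h^*\mathcal Z_P^{\mathbb H}\cong_{Q^m}\mathcal Z_P^{\mathbb H}$.
\end{enumerate}
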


The proof is based on a reduction to the rigidity of the quotient manifolds. The kernel subgroup \(K\) acts freely on the model moment--angle manifold \(\mathcal Z\), and we show that it also acts freely on any closed locally linear \(G\)-manifold \(N\) that is \(G\)-equivariantly homotopy equivalent to \(\mathcal Z\). The map \(f:N\to\mathcal Z\) therefore descends to a \(G/K\)-equivariant homotopy equivalence
\[
    \bar f:N/K\longrightarrow \mathcal Z/K.
\]
The quotient \(N/K\) is shown to be a closed locally linear manifold of the correct dimension: \(2n\) in the complex case and \(4n\) in the quaternionic case. Hence the equivariant rigidity theorems for quasitoric manifolds \cite{mp} and quoric manifolds \cite{gp} apply to the quotient map \(\bar f\).

The final step is bundle-theoretic. Both \(N\) and \(\mathcal Z\) are principal \(K\)-bundles over their quotients. The map \(f\) identifies \(N\) with the pullback bundle \(\bar f^{\,*}\mathcal Z\). The rigidity theorem for the quotient provides a \(G/K\)-equivariant homeomorphism
\[
    h:N/K\longrightarrow \mathcal Z/K
\]
which is equivariantly homotopic to \(\bar f\). Equivariant homotopy invariance of pullbacks of numerable principal bundles then gives
\[
    \bar f^{\,*}\mathcal Z\cong_G h^*\mathcal Z.
\]
Since \(h\) is a homeomorphism, \(h^*\mathcal Z\) is
\(G\)-equivariantly isomorphic to \(\mathcal Z\).  Thus
\[
    N
    \cong_G
    \bar f^{\,*}\mathcal Z
    \cong_G
    h^*\mathcal Z
    \cong_G
    \mathcal Z,
\]
which proves the main theorem.

\medskip

\paragraph{Organization of the paper.}
The paper is organized as follows.  In Section~\ref{sec:moment_angle} we recall the construction of complex and quaternionic moment--angle manifolds and their quasitoric and quoric quotients.  Section~\ref{sec:reduction} explains the reduction from rigidity of moment--angle manifolds to the known equivariant rigidity results for quasitoric and quoric manifolds. In Section~\ref{sec:classification} we introduce the pullback construction for the associated principal kernel bundles and prove the main equivariant homeomorphism-type rigidity theorem.

\medskip

\paragraph{Acknowledgments.}
The author would like to thank S.~Prassidis for suggesting the problem of topological rigidity of moment--angle manifolds and for many helpful discussions during the preparation of this paper. The author is also grateful to J.~Daura Serrano and J.~Garriga Puig for numerous discussions on quaternionic moment--angle manifolds. The material on quaternionic moment--angle manifolds presented here forms part of a joint work that will appear elsewhere. Many thanks also must go to C.~Aravanis, P.~Batakidis, R.~Tsiavou, and B.~Schutte for their interest in this project.

The author also thanks A.~Bahri for pointing out the relation with generalized moment--angle complexes associated to pairs \((D^{2j},S^{2j-1})\) (see \cite{bbcg15}, Theorem~7.5), and S.~Amelotte for emphasizing the need to assume compatible characteristic data in the reduction to quasitoric and quoric rigidity.

\section{Moment--angle manifolds}
\label{sec:moment_angle}

Moment--angle manifolds arise naturally in algebraic and toric topology as spaces associated with simplicial complexes and simple polytopes. They were introduced by Davis and Januszkiewicz \cite{dj} and later studied extensively in \cite{bp} as a disc-circle decomposition of the Davis-Januszkiewicz universal space. In this section we recall only the material needed later in the paper. Following the natural description in terms of polyhedral products (see for instance \cite{bbcg}), we use the disk--sphere model for complex (quaternionic resp.) moment--angle manifolds.

Let \(\mathcal K\) be a simplicial complex on the vertex set \([m]=\{1,\ldots,m\}\), and let
\[
    (\mathbf X,\mathbf A)=\bigl((X_i,A_i)\bigr)_{i\in[m]}
\]
be a sequence of pairs of spaces with \(A_i\subseteq X_i\). The associated \emph{polyhedral product} is the subspace of the product \(\prod_{i=1}^m X_i\) defined by
\[
    \mathcal Z_{\mathcal K}(\mathbf X,\mathbf A)
    =
    \bigcup_{\sigma\in \mathcal K}
    (\mathbf X,\mathbf A)^\sigma
    \subseteq
    \prod_{i=1}^m X_i,
\]
where for each simplex \(\sigma\in \mathcal  K\)
\[
    (\mathbf X,\mathbf A)^\sigma
    =
    \prod_{i=1}^m Y_i,
    \qquad
    Y_i=
    \begin{cases}
    X_i, & i\in \sigma,\\
    A_i, & i\notin \sigma.
    \end{cases}
\]

When all pairs are equal to a fixed pair \((X,A)\), we write simply
\[
    (X,A)^{\mathcal K}
\]
for the corresponding polyhedral product. Thus the classical moment--angle complex associated to \(\mathcal K\) is
\[
    \mathcal Z_{\mathcal K}^{\mathbb C}
    =
    (D^2,S^1)^{\mathcal K},
\]
and its quaternionic analogue is
\[
    \mathcal Z_{\mathcal K}^{\mathbb H}
    =
    (D^4,S^3)^{\mathcal K}.
\]

For the purposes of this paper, we focus on the case where \(\mathcal K\) is the nerve of the facets of a simple polytope \(P\). In this case the above polyhedral products are moment--angle manifolds. The central point for our later arguments is that these manifolds can be viewed as total spaces of principal bundles over quasitoric and quoric manifolds, respectively. The moment--angle manifold itself, however, depends only on the combinatorics of \(P\); the choice of characteristic data enters only when one passes to a quasitoric or quoric quotient.

\subsection{The complex case}
\subsubsection{Complex moment--angle manifolds}
\label{sec:moment_angle_C}

Let \(\mathcal K\) be an abstract simplicial complex on the vertex set \([m]=\{1,\ldots,m\}\), that is, a collection of subsets of \([m]\) which is closed under taking subsets and contains the empty set. Following \cite{bp00}, for \(I\subset [m]\), define
\[
    (D^2,S^1)^I
    =
    \prod_{i=1}^{m}Y_i,
    \qquad
    Y_i=
    \begin{cases}
    D^2, & i\in I,\\
    S^1, & i\notin I.
    \end{cases}
\]
The \emph{(complex) moment--angle complex} associated to \(K\) is the polyhedral product
\[
    \mathcal Z_{\mathcal K}=(D^2,S^1)^{\mathcal K}
    =
    \bigcup_{I\in \mathcal K}(D^2,S^1)^I
    \subset (D^2)^m.
\]
Equivalently,
\[
    \mathcal Z_{\mathcal K}
    =
    \left\{
    (z_1,\ldots,z_m)\in (D^2)^m
    :
    \{i: |z_i|<1\}\in \mathcal K
    \right\}.
\]
The torus \(T^m=(S^1)^m\) acts coordinatewise on \((D^2)^m\), and \(\mathcal Z_{\mathcal K}\) is invariant under this action.

We now specialize to the case relevant for this paper. Let \(P^n\) be a simple convex polytope with facets
\[
    \mathcal F(P)=\{F_1,\ldots,F_m\}.
\]
Let \(K_P\) be the nerve of the covering of \(\partial P\) by the facets of \(P\). Thus
\[
    \{i_1,\ldots,i_k\}\in K_P
    \quad\Longleftrightarrow\quad
    F_{i_1}\cap\cdots\cap F_{i_k}\neq\varnothing .
\]
Equivalently, \(K_P\) is the boundary complex of the simplicial polytope dual to \(P\). The associated \emph{complex moment--angle manifold} is
\[
    \mathcal Z_P:=\mathcal Z_{K_P}=(D^2,S^1)^{K_P}.
\]
According to \cite[Theorem 4.1.4]{tt}, it is a closed smooth manifold of dimension \(m+n\). The canonical coordinatewise \(T^m\)-action on \(\mathcal Z_P\) has orbit space naturally homeomorphic to \(P\):
\[
    \mathcal Z_P/T^m\cong P.
\]

More explicitly, for \(x\in P\), let \(F(x)\) denote the unique face of \(P\) whose relative interior contains \(x\). If
\[
    F(x)=F_{i_1}\cap\cdots\cap F_{i_k},
\]
we define the coordinate subtorus
\[
    T_{F(x)}
    =
    \{(t_1,\ldots,t_m)\in T^m:
    t_j=1 \text{ for } j\notin \{i_1,\ldots,i_k\}\}.
\]
Then \(\mathcal Z_P\) admits the quotient description
\[
    \mathcal Z_P\cong (T^m\times P)/{\sim},
\]
where
\[
    (t,x)\sim(t',x')
    \quad\Longleftrightarrow\quad
    x=x'
    \text{ and }
    t^{-1}t'\in T_{F(x)}.
\]
Under this description, the \(T^m\)-action is induced by left multiplication on the first factor, and the orbit space is \(P\).

\subsubsection{Quasitoric manifolds}
\label{sec:quasitoric_manifolds}

Let \(T^n=(S^1)^n\). In the consequent definitions we follow \cite{dj}, taking into account some adjustments from \cite{tt}. The standard action of \(T^n\) on \(\mathbb C^n\) is the coordinatewise action
\[
    (t_1,\ldots,t_n)\cdot (z_1,\ldots,z_n)
    =
    (t_1z_1,\ldots,t_nz_n).
\]
The orbit space of this action is naturally identified with the positive cone \(\mathbb R_{\ge 0}^{n}\).

We first recall the notion of weak equivariance (see \cite{weimeler12}). Let \(T\) be a torus. Two \(T\)-spaces \(X\) and \(Y\) are called \emph{weakly \(T\)-equivariantly homeomorphic} if there exist an automorphism\(\theta:T\longrightarrow T\) and a homeomorphism \(f:X\longrightarrow Y\) such that \(f(t\cdot x)=\theta(t)\cdot f(x)\) for all \(t\in T\) and \(x\in X\).

A \(T^n\)-action on a closed topological \(2n\)-manifold \(M\) is called \emph{locally standard} if every point of \(M\) has an invariant neighbourhood which is weakly \(T^n\)-equivariantly homeomorphic to an invariant open subset of the standard \(T^n\)-representation on \(\mathbb C^n\). Thus the local models are the standard coordinatewise action of \(T^n\) on \(\mathbb C^n\), up to an automorphism of the acting torus.

The orbit space of a locally standard \(T^n\)-manifold is naturally a manifold with corners. 

\begin{definition}
A \emph{quasitoric manifold} is a $2n$–dimensional manifold $M$ equipped with a locally standard $T^n$–action whose orbit space is a simple polytope $P$.
\end{definition}

\medskip

Here, the orbit map
\[
\pi : M \longrightarrow P
\]
maps each $k$--dimensional orbit to a point in the interior of a codimension--$k$ face of $P$, for $k=0,\dots,n$.

\medskip

\begin{example}
All complex projective spaces $\mathbb CP^n$ and their equivariant connected sums and products are quasitoric manifolds.
\end{example}

The isotropy data of a quasitoric manifold are encoded by a characteristic function. Let \(\mathcal F(P)=\{F_1,\ldots,F_m\}\) be the set of facets of \(P\). A \emph{characteristic function} is a map \(\lambda:\mathcal F(P)\longrightarrow \mathbb Z^n\) such that each \(\lambda(F_i)\) is primitive and the following nonsingularity condition holds: whenever
\[
    F_{i_1}\cap\cdots\cap F_{i_k}\neq\varnothing,
\]
the vectors
\[
    \lambda(F_{i_1}),\ldots,\lambda(F_{i_k})
\]
span a rank \(k\) direct summand of \(\mathbb Z^n\).

The pair \((P,\lambda)\) determines a quasitoric manifold, denoted \(M(P,\lambda)\) (see \cite[Proposition~1.8]{dj}). Conversely, every quasitoric manifold over \(P\) arises from such characteristic data, up to the usual equivalences. Equivalently, one may describe \(M(P,\lambda)\) by the canonical model
\[
    M(P,\lambda)
    =
    (T^n\times P)/{\sim_\lambda},
\]
where
\[
    (s,x)\sim_\lambda(s',x')
    \quad\Longleftrightarrow\quad
    x=x'
    \text{ and }
    s^{-1}s'\in T^{\lambda}_{F(x)}.
\]
Here \(T^{\lambda}_{F(x)}\subset T^n\) is the subtorus generated by the circle subgroups corresponding to the facets containing \(F(x)\).

\subsubsection{Moment--angle manifolds as principal torus bundles}
\label{sec:ma_principal_bundle}

The relation between moment--angle manifolds and quasitoric manifolds fits naturally into the general description of locally standard torus actions.  In Yoshida's formulation \cite{Yoshida}, a locally standard \(T^n\)-manifold with orbit space \(Q\) can be described as a quotient of a principal \(T^n\)-bundle over \(Q\) by an equivalence relation determined by the characteristic function of the action. In the special case where \(Q=P\) is a simple polytope, this description recovers the usual canonical model of a quasitoric manifold. From the moment--angle point of view, one starts instead from the larger \(T^m\)-space \(\mathcal Z_P\), whose orbit space is \(P\). This is not Yoshida's principal \(T^n\)-bundle; rather, it carries the full coordinatewise \(T^m\)-symmetry before a characteristic matrix has been chosen. Once a characteristic matrix is fixed, the kernel \(K_\Lambda\subset T^m\) of the corresponding characteristic homomorphism acts freely on \(\mathcal Z_P\), and the quotient
\[
    \mathcal Z_P/K_\Lambda
\]
is the associated quasitoric manifold. We describe this in detail.

Let \(P^n\) be a simple polytope with facets \(\mathcal F(P)=\{F_1,\ldots,F_m\}\), and let \(\lambda:\mathcal F(P)\longrightarrow \mathbb Z^n \) be a characteristic function.  We write
\[
    \Lambda=(\lambda_{ij})\in \mathbb Z^{n\times m}
\]
for the corresponding characteristic matrix, whose \(j\)-th column is the characteristic vector \(\lambda(F_j)\).  Equivalently, \(\Lambda\) is the homomorphism
\[
    \Lambda:\mathbb Z^m\longrightarrow \mathbb Z^n,
    \qquad
    e_j\longmapsto \lambda(F_j),
    \quad j=1,\ldots,m.
\]
Passing to tori gives a homomorphism
\[
    \Lambda:T^m\longrightarrow T^n
\]
defined by
\[
    \Lambda(t_1,\ldots,t_m)
    =
    \left(
    \prod_{j=1}^{m}t_j^{\lambda_{1j}},
    \ldots,
    \prod_{j=1}^{m}t_j^{\lambda_{nj}}
    \right),
    \qquad t_j\in S^1.
\]
Since \(\Lambda\) has rank \(n\), its kernel
\[
    K_\Lambda=\ker(\Lambda)
\]
is a subtorus of \(T^m\) of dimension \(m-n\).  Thus we obtain a short exact sequence of compact tori
\[
    1
    \longrightarrow
    K_\Lambda
    \longrightarrow
    T^m
    \xrightarrow{\ \Lambda\ }
    T^n
    \longrightarrow
    1,
\]
where \(K_\Lambda\cong T^{m-n}\) (for details see \cite[\S 4.8]{tt}, also compare \cite{lan}).

The nonsingularity condition on the characteristic matrix is equivalent to the condition that \(K_\Lambda\) intersects trivially the coordinate isotropy subgroups of the canonical \(T^m\)-action on \(\mathcal Z_P\). Hence \(K_\Lambda\) acts freely on \(\mathcal Z_P\).  Therefore the quotient
\[
    M(P,\Lambda)=\mathcal Z_P/K_\Lambda
\]
is a closed \(2n\)-manifold.  Moreover, the residual torus
\[
    T^m/K_\Lambda\cong T^n
\]
acts locally standardly on \(M(P,\Lambda)\), and the orbit space is naturally identified with \(P\).  Thus \(M(P,\Lambda)\) is the quasitoric manifold determined by the characteristic pair
\((P,\lambda)\). Thus $\mathcal Z_P$ can be viewed as the total space of a principal $T^{m-n}$--bundle over a quasitoric manifold (see \cite[Proposition 3.1.5]{bp00}).

Consequently, the quotient map
\[
    \pi_\Lambda:\mathcal Z_P\longrightarrow M(P,\Lambda)
\]
is a principal \(K_\Lambda\)-bundle
\[
    K_\Lambda
    \longrightarrow
    \mathcal Z_P
    \xrightarrow{\ \pi_\Lambda\ }
    M(P,\Lambda).
\]
This principal bundle is the fibration used throughout the rigidity arguments below.

\begin{example}[The classical Hopf fibration]
Let \(P=\Delta^n\) be the \(n\)-simplex.  Then \(P\) has \(m=n+1\) facets, say \(\mathcal F(\Delta^n)=\{F_1,\ldots,F_{n+1}\}\). The associated complex moment--angle manifold is \(\mathcal Z_{\Delta^n}\cong S^{2n+1}\). Indeed, the dual simplicial complex \(K_{\Delta^n}\) is the boundary of the \(n\)-simplex, and hence \(\mathcal Z_{\Delta^n}=(D^2,S^1)^{\partial\Delta^n}\cong S^{2n+1}\). The complex projective space \(\mathbb CP^n\) is the standard quasitoric manifold over \(\Delta^n\). Its orbit space under the standard \(T^n\)-action is \(\Delta^n\), and its characteristic matrix may be chosen as
\[
    \Lambda=
    \begin{pmatrix}
    1 & 0 & \cdots & 0 & -1\\
    0 & 1 & \cdots & 0 & -1\\
    \vdots & \vdots & \ddots & \vdots & \vdots\\
    0 & 0 & \cdots & 1 & -1
    \end{pmatrix}
    \in \mathbb Z^{n\times(n+1)}.
\]
Thus the corresponding homomorphism \(\Lambda:T^{n+1}\longrightarrow T^n\) is given by \(\Lambda(t_1,\ldots,t_{n+1}) =    (t_1t_{n+1}^{-1},\ldots,t_nt_{n+1}^{-1})\). Its kernel is the diagonal circle \(K_\Lambda =\{(t,\ldots,t)\in T^{n+1}:t\in S^1\}\cong S^1\). Therefore the quasitoric quotient associated with this characteristic matrix is
\[
    M(\Delta^n,\Lambda)
    =
    \mathcal Z_{\Delta^n}/K_\Lambda
    \cong
    \mathbb CP^n.
\]

Consequently, the principal torus bundle
\[
    K_\Lambda
    \longrightarrow
    \mathcal Z_{\Delta^n}
    \longrightarrow
    M(\Delta^n,\Lambda)
\]
becomes
\[
    S^1
    \longrightarrow
    S^{2n+1}
    \longrightarrow
    \mathbb CP^n,
\]
which is precisely the classical \emph{Hopf fibration}. In particular, for \(n=1\) we recover
\[
    S^1
    \longrightarrow
    S^3
    \longrightarrow
    \mathbb CP^1\cong S^2.
\]
\end{example}

\subsection{The quaternionic case}
\label{subsec:quaternionic_case}

We now discuss the quaternionic analogue of the preceding construction. Throughout, let $\mathbb H$ denote the space of quaternions, and let $Q:=S^3\subset \mathbb H$ be the compact non-abelian Lie group of unit quaternions. Thus $Q\cong \mathrm{SU}(2)\cong \mathrm{Sp}(1)$. For $n\ge 1$, we write $Q^n=(S^3)^n$ and refer to it as the \emph{quaternionic torus}.

\subsubsection{Quaternionic moment--angle manifolds}
\label{sec:moment_angle_H}

Let \(\mathcal K\) be a simplicial complex on the vertex set \([m]=\{1,\ldots,m\}\). Let \(D^4\subset \mathbb H\) be the closed unit ball in the quaternionic line, and let \(S^3=\partial D^4\) be the group of unit quaternions. Taking products, we obtain  
\[
(D^4)^m = \{(h_1,\dots,h_m)\in \mathbb H^m \mid |h_i|\le 1 \text{ for } i=1,\dots,m\}
\]
\noindent which will be called the \emph{quaternionic polydisc}, and the quaternionic tori corresponds to
\[
Q^m = (S^3)^m = (\partial D^4)^m.
\]
In this quaternionic setting, the natural analogue to the complex $\mathcal Z_{\mathcal K}$ is obtained from the sequence of pairs of spaces $(D^{4}, S^{3})$. The \emph{quaternionic moment--angle complex}
associated to \(\mathcal K\) is the polyhedral product
\[
    \mathcal Z_{\mathcal K}^{\mathbb H}
    =
    (D^4,S^3)^{\mathcal K}
    =
    \bigcup_{I\in \mathcal K}(D^4,S^3)^I
    \subset (D^4)^m,
\]
where, for \(I\subset[m]\),
\[
    (D^4,S^3)^I
    =
    \prod_{i=1}^{m}Y_i,
    \qquad
    Y_i=
    \begin{cases}
    D^4, & i\in I,\\
    S^3, & i\notin I.
    \end{cases}
\]
\noindent 
The (standard) coordinatewise left multiplication of $Q^m$ on $(D^4)^m\subset\mathbb H^m$ 
$$(q_1,\ldots,q_m)\cdot(h_1,\ldots,h_m) = (q_1h_1,\ldots,q_mh_m)$$
restricts to a canonical action on $\mathcal Z_{\mathcal K}^{\mathbb H}$. Indeed, for every simplex $\sigma\in\mathcal K$ the subspace
\[
\prod_{i\in \sigma}D^4
\times
\prod_{j\notin \sigma}S^3
\]
is preserved by the action since left multiplication by a unit quaternion preserves both the unit disc $D^4$ and its boundary $S^3$. Since $\mathcal Z_{\mathcal K}^{\mathbb H}$ is the union of these invariant subspaces, it follows that $\mathcal Z_{\mathcal K}^{\mathbb H} \subseteq (D^{4})^{m}$ is $Q^m$--invariant.

Therefore, this action preserves the quaternionic polydisc \((D^4)^m\) and its boundary factors \(S^3=\partial D^4\). The coordinatewise \(Q^m\)-action on \((D^4)^m\) restricts to an action on \(\mathcal Z_{\mathcal K}^{\mathbb H}\).

\begin{example}[The simplex]\label{ex:simplex}
Let \(\mathcal K=\partial\Delta^n\). Since every proper subset of \([n+1]\) is a simplex of \(\mathcal K\),
\[
\mathcal Z_{\mathcal K}^{\mathbb H}=\bigcup_{i=1}^{n+1} \Bigl( (D^4)^{i-1}\times S^3\times (D^4)^{n+1-i} \Bigr).
\]
This is precisely the boundary of the quaternionic polydisc
\[
(D^4)^{n+1},
\]
and therefore
\[
\mathcal Z_{\mathcal K}^{\mathbb H}=\partial (D^4)^{n+1} \cong S^{4n+3}.
\]
In particular, the quaternionic moment--angle manifold associated to the simplex \(\Delta^n\) is the sphere \(S^{4n+3}\).
\end{example}
\medskip

Let \(P^n\) be a simple polytope with facets \(\mathcal F(P)=\{F_1,\ldots,F_m\}\). Let \(K_P\) be the nerve of the covering of \(\partial P\) by the facets of \(P\), as before. The associated \emph{quaternionic moment--angle manifold} is
\[
    \mathcal Z_P^{\mathbb H}
    :=
    \mathcal Z_{K_P}^{\mathbb H}
    =
    (D^4,S^3)^{K_P}.
\]

Let
\[
P^n=\{x\in\mathbb R^n : A_Px+b_P\ge 0\}
\]
be a simple polytope with \(m\) facets. The affine map
\[
i_P:\mathbb R^n\longrightarrow\mathbb R^m, \qquad i_P(x)=A_Px+b_P,
\]
restricts to an embedding
\[
i_P:P^n\hookrightarrow \mathbb R_{\ge0}^m.
\]
The \(i\)-th coordinate of \(i_P(x)\) vanishes precisely when \(x\in F_i\) (for details see \cite{panov2010}). 

As in the classical (complex) case, $P^n$ admits a natural structure of a cubical complex embedded in the subset of faces of the $m$--cube $I^m$ (for details see \cite[Theorem 2.2]{bp} and \cite[pp.~77]{ho}). More precisely, there exists a canonical embedding
\[
i_P : P^n \hookrightarrow I^m
\]
such that if
\[
v = F_{i_1}\cap\cdots\cap F_{i_n}
\]
is a vertex of $P^n$, then a neighbourhood cube of $v$ maps onto the $n$–face of $I^m$ defined by
\[
x_j=1 \quad \text{for } j\notin\{i_1,\dots,i_n\}.
\]

\begin{lemma}\label{lem:embedding cover}
Let \(P^n\) be a simple polytope with \(m\) facets. Then the cubical embedding
\[
i_P:P^n\hookrightarrow I^m
\]
is covered by a \(Q^m\)-equivariant embedding
\[
i_e:\mathcal Z_P^{\mathbb H}\hookrightarrow (D^4)^m.
\]
\end{lemma}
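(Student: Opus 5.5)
The plan is to build \(\mathcal Z_P^{\mathbb H}\) directly as an identification space over \(P\), following the complex model \(\mathcal Z_P\cong (T^m\times P)/{\sim}\), and to define \(i_e\) from the cubical embedding \(i_P\) together with a ``polar coordinates'' map. Let
\[
\rho:(D^4)^m\longrightarrow I^m,\qquad \rho(h_1,\ldots,h_m)=(|h_1|,\ldots,|h_m|).
\]
Then \(\rho\) is the orbit map of the coordinatewise \(Q^m\)-action on \((D^4)^m\), and \((D^4)^m\cong (Q^m\times I^m)/{\approx}\). Here \((q,y)\approx(q',y')\) if and only if \(y=y'\) and \(q^{-1}q'\) lies in the coordinate subgroup \(Q_{y}=\prod_{\{j:\,y_j=0\}}S^3\). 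The homeomorphism sends \((q,y)\) to \((q_1y_1,\ldots,q_my_m)\), which is well defined because \(q_jy_j\) does not depend on \(q_j\) when \(y_j=0\).

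First I would make the identification \(\mathcal Z_P^{\mathbb H}\cong (Q^m\times P)/{\sim}\) precise. The relation is \((q,x)\sim(q',x')\) if and only if \(x=x'\) and \(q^{-1}q'\in Q_{F(x)}\), where \(Q_{F(x)}=\{q\in Q^m: q_j=1 \text{ for } j\notin\{i_1,\ldots,i_k\}\}\) and \(F(x)=F_{i_1}\cap\cdots\cap F_{i_k}\). Define
\[
i_e\bigl([q,x]\bigr)=\bigl(q_1\,i_P(x)_1,\ldots,q_m\,i_P(x)_m\bigr)\in (D^4)^m.
\]
By construction \(\rho\circ i_e=i_P\circ\pi\), so \(i_e\) covers \(i_P\). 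Equivariance is immediate, since \(Q^m\) acts by left multiplication on the first factor on both sides.

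Next I check well-definedness and injectivity. By the property recalled before the lemma, the \(j\)-th coordinate \(i_P(x)_j\) vanishes exactly when \(x\in F_j\). Hence the stabilizer of the point \(i_P(x)\in I^m\), namely \(Q_{i_P(x)}\), equals \(Q_{F(x)}\). So \(i_e\) is the map \((Q^m\times P)/{\sim}\to (Q^m\times I^m)/{\approx}\) induced by \(\mathrm{id}\times i_P\), and this is well defined. It is injective because \(i_P\) is injective and the two relations coincide fibrewise. Continuity follows from the universal property of quotient maps. The source is compact (a quotient of \(Q^m\times P\)) and the target is Hausdorff, so \(i_e\) is a closed embedding.

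Finally I identify the image with the polyhedral product \((D^4,S^3)^{K_P}\). A point \(h\) lies in the image if and only if \(\rho(h)\in i_P(P)\). Using the cubical structure, \(i_P(P)\) is the union over vertices \(v=F_{i_1}\cap\cdots\cap F_{i_n}\) of the faces \(\{x_j=1,\ j\notin\{i_1,\ldots,i_n\}\}\). Its preimage under \(\rho\) is therefore the union over vertices of \((D^4,S^3)^{\{i_1,\ldots,i_n\}}\). Since every simplex of \(K_P\) lies in a maximal simplex corresponding to a vertex, this union is \((D^4,S^3)^{K_P}\). This justifies the identification in the first step and shows \(i_e\) realizes \(\mathcal Z_P^{\mathbb H}\) as the canonical subspace.

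The main obstacle is this last step. One must use the precise form of the cubical embedding: the image of \(P\) is exactly the union of the faces \(\{x_j=1 \text{ for } j\notin\sigma\}\) over maximal simplices \(\sigma\), rather than just a neighbourhood of each vertex. I would take this from the cubical subdivision of \(P\) in \cite[Theorem 2.2]{bp}, which is purely combinatorial and independent of the coefficient pair, and then transfer it verbatim from \((D^2,S^1)\) to \((D^4,S^3)\).
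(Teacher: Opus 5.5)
Your proof is correct, but it takes a different route from the paper.

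\textbf{The paper's route.} The paper works locally. It identifies the piece \(B_v\) of \(\mathcal Z_P^{\mathbb H}\) over each vertex \(v\) with \((D^4)^n\times(S^3)^{m-n}\) via the disk--sphere description. It includes each piece into \((D^4)^m\) by the obvious coordinate inclusion, and asserts that these inclusions agree on overlaps and glue to a global equivariant embedding.

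\textbf{Your route.} You give one global formula, induced by \(\mathrm{id}\times i_P\) through the polar decomposition \((D^4)^m\cong (Q^m\times I^m)/{\approx}\). Well-definedness and injectivity both reduce to the single fact \(\{j: i_P(x)_j=0\}=\{j: x\in F_j\}\). The embedding property comes from compactness of the source and Hausdorffness of the target. The image is then \(\rho^{-1}(i_P(P))=(D^4,S^3)^{K_P}\).

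\textbf{What your route buys.}
\begin{enumerate}
\item The covering identity \(\rho\circ i_e=i_P\circ\pi\) holds by construction; the paper's proof never checks it.
\item There is no overlap compatibility to verify. The paper asserts it without argument, although it is exactly what is needed for the glued map to be well defined and injective.
\item You obtain at the same time the identification of \(\mathcal Z_P^{\mathbb H}\) with \((Q^m\times P)/{\sim}\). The paper proves this separately in the next proposition, by invoking this lemma.
\item You correctly isolate the one genuine input: the image of the cubical embedding is exactly the union of the faces \(\{y\in I^m: y_j=1 \text{ for } j\notin\{i_1,\ldots,i_n\}\}\) over all vertices. The paper uses this only implicitly.
\end{enumerate}

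\textbf{What the paper's route buys.} It is shorter when \(\mathcal Z_P^{\mathbb H}\) is taken by definition to be the polyhedral product inside \((D^4)^m\), so that \(i_e\) is literally an inclusion. Its chart description is also reused in the smoothness theorem that follows. Note, however, that the compact model \((D^4)^n\times(S^3)^{m-n}\) is the preimage of the cube at \(v\), not of the open set \(U_v\) the paper uses. Your cube-based description avoids this mismatch.
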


\begin{proof}
The proof is similar to \cite[Lemma~6.6]{bp}. For a vertex \(v\in P^n\), let \(U_v\subset P^n\) be the open set obtained by deleting all faces not containing \(v\). The sets \(U_v\) form an atlas of the manifold with corners \(P^n\) (see \cite[Construction~5.8]{bp}).

Let \(B_v=\pi^{-1}(U_v)\subset \mathcal Z_P^{\mathbb H}\). If \(v=F_{i_1}\cap\cdots\cap F_{i_n}\), then 
\[B_v\cong (D^4)^n\times (S^3)^{m-n}\] 
because by the disk--sphere description of \(\mathcal Z_P^{\mathbb H}\), the coordinates corresponding to the facets meeting at \(v\) vary in \(D^4\), whereas the remaining coordinates lie in \(S^3\). 

The obvious inclusion
\[
(D^4)^n\times (S^3)^{m-n}
\hookrightarrow
(D^4)^m
\]
is \(Q^m\)-equivariant. These local embeddings are compatible on overlaps, hence they glue to a global \(Q^m\)-equivariant embedding
\[
i_e:\mathcal Z_P^{\mathbb H}\hookrightarrow (D^4)^m.
\]
\end{proof}

The following result shows that, in contrast to the case of an arbitrary simplicial complex, the space $\mathcal Z_P^{\mathbb H}$ is a smooth manifold.

\begin{thm}\label{thm:quaternionic-ZP-smooth}
For any simple polytope \(P^n\) with \(m\) facets, \(\mathcal Z_P^{\mathbb H}\) is a smooth \((3m+n)\)-dimensional manifold and the canonical \(Q^m\)-action is smooth.
\end{thm}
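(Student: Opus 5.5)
The plan is to realize \(\mathcal Z_P^{\mathbb H}\) as a nondegenerate intersection of real quadrics, by direct analogy with the complex case \cite[Theorem 4.1.4]{tt}. Write \(P^n=\{x\in\mathbb R^n: A_Px+b_P\ge 0\}\) and use the affine embedding \(i_P:P^n\hookrightarrow\mathbb R^m_{\ge 0}\) recalled above. Its image is the intersection of \(\mathbb R^m_{\ge0}\) with an \(n\)-dimensional affine plane, and this plane can be written as \(\{y\in\mathbb R^m: \Gamma y=\Gamma b_P\}\) for a \((m-n)\times m\) matrix \(\Gamma\) whose rows span the annihilator of the column space of \(A_P\). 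Consider the map \(\mu:\mathbb H^m\to\mathbb R^m_{\ge0}\) given by \(\mu(h)=(|h_1|^2,\ldots,|h_m|^2)\), and define
\[
\mathcal Z_\Gamma=\{h\in\mathbb H^m:\ \Gamma\mu(h)=\Gamma b_P\}=\mu^{-1}(i_P(P)).
\]
This is an intersection of \(m-n\) real quadrics in \(\mathbb R^{4m}\), and it is invariant under the coordinatewise left \(Q^m\)-action, since \(|q_ih_i|=|h_i|\).

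First, I will show that \(\mathcal Z_\Gamma\) is a smooth submanifold of dimension \(4m-(m-n)=3m+n\). The map \(F(h)=\Gamma\mu(h)\) has differential
\[
dF_h(v)=2\,\Gamma\bigl(\mathrm{Re}(\bar h_1v_1),\ldots,\mathrm{Re}(\bar h_mv_m)\bigr).
\]
Let \(I(h)=\{i: h_i\neq 0\}\). The image of \(dF_h\) is then the span of the columns \(\gamma_i\) of \(\Gamma\) with \(i\in I(h)\). If \(\mu(h)\) lies in the relative interior of the face \(F_{j_1}\cap\cdots\cap F_{j_k}\), then the zero coordinates are exactly \(j_1,\ldots,j_k\). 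Simplicity of \(P\) implies that the remaining \(m-k\ge m-n\) columns of \(\Gamma\) span \(\mathbb R^{m-n}\). This is the standard linear-algebra fact: the columns \(\gamma_j\) with \(j\) not in the index set of a vertex form a basis, because the rows of \(A_P\) indexed by that vertex form a basis of \(\mathbb R^n\). Hence \(\Gamma b_P\) is a regular value, and the implicit function theorem gives smoothness. The restricted \(Q^m\)-action is then smooth, as it is the restriction of a linear action on \(\mathbb H^m\).

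Second, I will identify \(\mathcal Z_\Gamma\) \(Q^m\)-equivariantly with \((D^4,S^3)^{K_P}\). After rescaling each coordinate of \(i_P\) using a positive diagonal change, combined with the cubical embedding \(i_P:P^n\hookrightarrow I^m\), the quotient \(\mathcal Z_\Gamma/Q^m\) is identified with \(P\). The stabilizer of a point over \(x\) is the coordinate subgroup \(\prod_{i:x\in F_i}Q\). This gives the description \(\mathcal Z_\Gamma\cong(Q^m\times P)/\!\sim\), the exact quaternionic analogue of the complex quotient description, since \(Q\) acts freely on \(\mathbb H\setminus 0\) with orbit space \(\mathbb R_{>0}\). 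The same description holds for \((D^4,S^3)^{K_P}\): a point of \((D^4)^m\) has orbit \(|h|\in I^m\), and Lemma~\ref{lem:embedding cover} shows the orbit space is the cubical image of \(P\) with the same stabilizers. An equivariant homeomorphism is then obtained by a face-preserving homeomorphism \(P\to P\) between the two embeddings. I would build it chart by chart over the sets \(U_v\), using \(B_v\cong(D^4)^n\times(S^3)^{m-n}\) from the proof of Lemma~\ref{lem:embedding cover} against the local form of \(\mathcal Z_\Gamma\) near \(\mu^{-1}(v)\), which is \(\mathbb H^n\times (S^3)^{m-n}\) locally.

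The main obstacle is this second identification. Specifically, one must ensure that the equivariant homeomorphism is compatible on overlaps, so that the smooth structure transported from \(\mathcal Z_\Gamma\) genuinely makes \((D^4,S^3)^{K_P}\) a smooth manifold with a smooth action. I would handle it as in \cite[Construction~5.8 and Lemma~6.6]{bp}. The map is induced from a single global face-preserving homeomorphism of \(P\) with its cubical image, together with the identity on the \(Q^m\) factor of \((Q^m\times P)/\!\sim\), so compatibility on overlaps is automatic. The smooth structure is then defined by transport, and smoothness of the action follows from the first step.
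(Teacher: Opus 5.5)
Your proof is correct, but it takes a different route from the paper.

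The paper does not use a quadric model in the quaternionic case. It takes the pieces $B_v\cong (D^4)^n\times(S^3)^{m-n}$ of the disk--sphere decomposition and identifies them, via Lemma~\ref{lem:embedding cover}, with faces $C_{I_v}$ of $(D^4)^m$. It then declares the transition maps to be restrictions of the identity of $\mathbb H^m$, and reads off $3m+n$ from a local model over a codimension-$k$ face.

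You instead realize the space as the level set $\Gamma\mu(h)=\Gamma b_P$ in $\mathbb R^{4m}$. Surjectivity of $dF_h$ comes from the fact that the columns of $\Gamma$ outside the index set of a vertex form a basis, which is exactly where simplicity enters. You then transport the smooth structure along a $Q^m$-equivariant homeomorphism. That homeomorphism comes from comparing the two identification-space models $(Q^m\times P)/\!\sim$ built from the affine and the cubical embeddings of $P$. This is essentially the paper's later Proposition on $P^n\times Q^m/\!\sim$, proved for both models at once.

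The paper's approach stays inside the polyhedral product and needs no realization $(A_P,b_P)$. As written, however, it does not produce smooth charts:
\begin{itemize}
\item the $C_{I_v}$ are compact manifolds with corners, and their overlaps $C_{I_v\cap I_w}$ are lower-dimensional faces rather than open sets;
\item $(D^4,S^3)^{K_P}\subset\mathbb H^m$ itself has corners (for $P=\Delta^1$ it is $\partial(D^4\times D^4)$), so identity transition maps give no smooth atlas;
\item the local model should involve $\mathbb H^k$ rather than $(D^4)^k$.
\end{itemize}

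Your regular-value argument is what actually supplies the smooth structure, and it makes smoothness of the action immediate. The price is the extra comparison step. In that step, your remark about a positive diagonal rescaling is superfluous: no rescaling makes the flat affine image cubical. The global face-preserving homeomorphism between the two images of $P$, together with the identity on $Q^m$, already does the job.
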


\begin{proof}
Let
\[
\rho:\mathcal Z_P^{\mathbb H}\longrightarrow P^n
\]
denote the projection induced by the polyhedral-product structure. For every vertex \(v\in P^n\), let \(U_v\subset P^n\) be the open set obtained by deleting all faces not containing \(v\). The sets \(U_v\) form an atlas of the manifold with corners \(P^n\). Suppose
\[
v=F_{i_1}\cap\cdots\cap F_{i_n}.
\]
Since \(P^n\) is simple, these are precisely the \(n\) facets meeting at \(v\). By the disk--sphere description of \(\mathcal Z_P^{\mathbb H}\),
\[
B_v:=\rho^{-1}(U_v) \cong (D^4)^n\times (S^3)^{m-n}.
\]

Let \(I_v\subset[m]\) denote the set of facets containing \(v\). Via the \(Q^m\)-equivariant embedding
\[
i_e:\mathcal Z_P^{\mathbb H}\hookrightarrow (D^4)^m
\]
of Lemma~\ref{lem:embedding cover}, the set \(B_v\) is identified with the face
\[
C_{I_v} = \{h\in (D^4)^m : h_j\in S^3 \text{ for } j\notin I_v\}
\]
of the manifold with corners \((D^4)^m\). Hence for any two vertices \(v,w\),
\[
B_v\cap B_w = C_{I_v}\cap C_{I_w} = C_{I_v\cap I_w}.
\]

Therefore all charts \(B_v\) are realized as faces of the same ambient manifold with corners \((D^4)^m\). On overlaps, the coordinate descriptions are induced from the ambient coordinates of \(\mathbb H^m\), so the transition maps are simply restrictions of the identity map on \(\mathbb H^m\). In particular, they are smooth.

Now let \(x\in \mathcal Z_P^{\mathbb H}\) project to the relative interior of a face \(F_\tau\subset P^n\) of codimension \(k\). Since \(P^n\) is simple, exactly \(k\) facets contain \(F_\tau\). Locally near the image of \(x\) in \(P^n\), the polytope is modelled on \(\mathbb R^{n-k}\times \mathbb R_{\geq 0}^{k}\). Under the quaternionic disk--sphere construction, a neighbourhood of \(x\) in \(\mathcal Z_P^{\mathbb H}\) is therefore modelled on \(\mathbb R^{n-k}\times (D^4)^k\times Q^{m-k}\). Here the factor \(\mathbb R^{n-k}\) corresponds to directions tangent to the face \(F_\tau\), the factors \((D^4)^k\) correspond to the \(k\) facets containing \(F_\tau\), and the factor \(Q^{m-k}\) corresponds to the remaining coordinates, which lie in \(S^3=Q\).

Its dimension is
\[(n-k)+4k+3(m-k)=3m+n,\]
which is independent of \(k\). Hence \(\mathcal Z_P^{\mathbb H}\) is a smooth manifold of dimension \(3m+n\).

The $Q^m$–action is smooth in these local charts, hence globally smooth. This completes the proof.
\end{proof}

The canonical \(Q^m\)-action is smooth, and its orbit space \(\mathcal Z_P^{\mathbb H}/Q^m\) is naturally homeomorphic to \(P\). More explicitly, for \(x\in P\), let \(F(x)\) be the unique face whose
relative interior contains \(x\), and let
\[
    I_x=\{i\in[m]:x\in F_i\}
\]
be the set of facets containing \(x\).  Define the coordinate
quaternionic subtorus
\[
    Q^{I_x}
    =
    \{(q_1,\ldots,q_m)\in Q^m:
    q_j=1 \text{ for } j\notin I_x\}.
\]

Consider the map \(\mathbb R_{\ge0}\times Q \longrightarrow \mathbb H\) with \((r,q)\longmapsto rq\). Taking products yields \(\mathbb R_{\ge0}^m\times Q^m \longrightarrow \mathbb H^m\). For
\(h=(h_1,\ldots,h_m)\in \mathbb H^m\), let \(r_i=|h_i|\), and \(I(h)=\{i\in[m]:h_i=0\}\). The fibre over $h$ is
\[
(r_1,\ldots,r_m)\times Q^{|I(h)|}.
\]
\noindent
Therefore
\[
\mathbb H^m
\cong
\mathbb R_{\ge0}^m\times Q^m/\!\sim,
\]
where the equivalence is defined by
\[
(r,q_1)\sim(r,q_2)
\quad\Longleftrightarrow\quad
q_1^{-1}q_2\in Q^{I(r)},
\]
and
\[
I(r)=\{i:r_i=0\}.
\]
This identification reconstructs $\mathbb H^m$ from its orbit space $\mathbb R_{\ge0}^m$ and the $Q^m$-action; for details see \cite[\S3.2]{ho}.

Let \(P^n\) be a simple polytope with facets \(F_1,\ldots,F_m\). For \(p\in P^n\), define \(I_p = \{i\in[m]:p\in F_i\}\). The reconstruction of \(\mathbb H^m\) described above restricts to the embedded polytope \(P^n\) and yields the following quaternionic analogue of the classical identification-space model for moment--angle manifolds.

\begin{prop}
The quaternionic moment--angle manifold \(\mathcal Z_P^{\mathbb H}\) is
\(Q^m\)-equivariantly homeomorphic to the quotient
\[
P^n\times Q^m/\!\sim ,
\]
where
\[
(p,q_1)\sim(p,q_2)
\qquad\Longleftrightarrow\qquad
q_1^{-1}q_2\in Q^{I_p}.
\]
\end{prop}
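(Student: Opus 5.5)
The plan is to write down an explicit map and check that it is a continuous, $Q^m$-equivariant bijection between compact Hausdorff spaces. Using the embedding $i_P:P^n\hookrightarrow \mathbb R^m_{\ge 0}$, rescaled so that its image lies in the cube $I^m$ (coordinates $x_j=1$ away from the facets at a vertex, as in the cubical embedding above), define
\[
\Phi: P^n\times Q^m\longrightarrow (D^4)^m,\qquad \Phi(p,q)=\bigl(r_1(p)q_1,\ldots,r_m(p)q_m\bigr),
\]
where $r(p)=i_P(p)\in I^m$. This is the restriction of the map $\mathbb R^m_{\ge0}\times Q^m\to\mathbb H^m$ used in the reconstruction of $\mathbb H^m$. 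It is continuous. It is $Q^m$-equivariant for left multiplication on the $Q^m$ factor, because $q'\cdot(r_iq_i)=r_i(q'_iq_i)$, the real scalar $r_i$ being central in $\mathbb H$.

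The first step is to check that the image of $\Phi$ is exactly $i_e(\mathcal Z_P^{\mathbb H})$.
\begin{itemize}
\item \emph{Image lies in $\mathcal Z_P^{\mathbb H}$.} For $p\in P$, the set $\{i: r_i(p)<1\}$ consists of facets containing the face $F(p)$, hence is a simplex of $K_P$. So $\Phi(p,q)\in(D^4,S^3)^{K_P}$.
\item \emph{Surjectivity.} Given $h\in\mathcal Z_P^{\mathbb H}$, set $r=(|h_1|,\ldots,|h_m|)$. One must know that $r\in i_P(P)$. This is the statement that the cubical image of $P$ in $I^m$ equals the cubical complex $(I,\{1\})^{K_P}$, i.e.\ the orbit space identification $\mathcal Z_P^{\mathbb H}/Q^m\cong P$ coming from \cite[Theorem 2.2]{bp}, which the paper uses in Lemma~\ref{lem:embedding cover}. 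Then choose $q_i=h_i/|h_i|$ when $h_i\ne0$ and $q_i$ arbitrary otherwise.
\end{itemize}

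The second step identifies the fibres of $\Phi$. We have $\Phi(p,q)=\Phi(p',q')$ if and only if $r(p)=r(p')$, which by injectivity of $i_P$ means $p=p'$, and $q_i=q'_i$ for every $i$ with $r_i(p)\neq0$. Since $r_i(p)=0$ exactly when $p\in F_i$, i.e.\ $i\in I_p$, this is the condition $q^{-1}q'\in Q^{I_p}$. Hence $\Phi$ descends to a continuous equivariant bijection
\[
\bar\Phi:(P^n\times Q^m)/\!\sim\;\longrightarrow\;\mathcal Z_P^{\mathbb H}.
\]
The source is compact, being a quotient of a compact space. The target is Hausdorff, as a subspace of $\mathbb H^m$. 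So $\bar\Phi$ is a homeomorphism, and it is $Q^m$-equivariant because $\Phi$ is.

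The main obstacle is the surjectivity step, namely the identification of $i_P(P)$ with the cubical subcomplex $\bigcup_{\sigma\in K_P}\{x\in I^m: x_j=1 \text{ for } j\notin\sigma\}$. The cubical embedding only describes neighbourhood cubes of vertices, so I would argue vertex by vertex over the atlas $\{U_v\}$. Each neighbourhood cube of $v$ maps onto the face $\{x_j=1,\ j\notin I_v\}$ of $I^m$. Every point of $(I,\{1\})^{K_P}$ lies in such a face for some maximal simplex $I_v$, since $P$ is simple. The preimage of $U_v$ on the moment--angle side is $B_v$, exactly as in the proof of Theorem~\ref{thm:quaternionic-ZP-smooth}. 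This reduces the identification to the local models $(D^4)^n\times(S^3)^{m-n}$, where it is immediate.
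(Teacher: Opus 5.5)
Your proof is correct and is essentially the paper's argument: restrict the polar map $\mathbb R^m_{\ge0}\times Q^m\to\mathbb H^m$ to the embedded polytope, identify the image with $\mathcal Z_P^{\mathbb H}$, and read off the fibres from $r_i(p)=0\Leftrightarrow i\in I_p$. Your compact--Hausdorff argument simply makes explicit what the paper gets by restricting Hopkinson's quotient description of $\mathbb H^m$. One slip needs fixing: $r$ must be the cubical embedding itself, not a rescaling of the affine one, since a rescaled affine embedding has all coordinates $<1$ at interior points and $\Phi$ would then leave $(D^4,S^3)^{K_P}$. Likewise, in your first bullet $\{i:r_i(p)<1\}$ is a subset of $I_v$ for a vertex $v$ whose cube contains $p$, not the set of facets containing $F(p)$; both inclusions of the image follow at once from $r(P)=\bigcup_v\{x\in I^m: x_j=1,\ j\notin I_v\}=(I,\{1\})^{K_P}$.
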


\begin{proof}
By \cite[\S 3.2]{ho} the orbit map
\[
\mu_{\mathbb H}:\mathbb H^m\to \mathbb R_{\ge0}^m,
\qquad
(h_1,\ldots,h_m)
\longmapsto
(|h_1|^2,\ldots,|h_m|^2),
\]
identifies $\mathbb H^m$ with the quotient space
\[
\mathbb R_{\ge0}^m\times Q^m/\!\sim,
\]
where the isotropy subgroup over a point $y=(y_1,\ldots,y_m)$ is precisely $Q^{I(y)}$ with $I(y)=\{i:y_i=0\}$. Now we show that  
\[
\mathcal Z_P^{\mathbb H}
=
\mu_{\mathbb H}^{-1}(i_P(P^n)).
\]

\noindent
Indeed, by Lemma \ref{lem:embedding cover}, we may regard
\(\mathcal Z_P^{\mathbb H}\) as a \(Q^m\)-invariant subspace of \((D^4)^m\).

Recall that the orbit map
\[
\mu_{\mathbb H}(h_1,\ldots,h_m)
=
(|h_1|^2,\ldots,|h_m|^2)
\]
identifies the orbit space
\[
(D^4)^m/Q^m\cong I^m.
\]

Let $h=(h_1,\ldots,h_m)\in (D^4)^m$. The coordinates \(h_i\) that vanish determine a subset $I(h)=\{i\in[m]:h_i=0\}$. Under the cubical embedding (Lemma~\ref{lem:embedding cover}) $i_P:P^n\hookrightarrow I^m$, the face of \(P^n\) containing a point \(x\) is encoded by the set of vanishing coordinates of \(i_P(x)\). More precisely,
\[
i_P(x)_i=0
\qquad\Longleftrightarrow\qquad
x\in F_i.
\]

By the disk--sphere model,
\[
\mathcal Z_P^{\mathbb H}
=
\bigcup_{\sigma\in K}
\left(
\prod_{i\in\sigma}D^4
\times
\prod_{j\notin\sigma}S^3
\right),
\]
so a point \(h\in(D^4)^m\) belongs to \(\mathcal Z_P^{\mathbb H}\) precisely when the set \(I(h)=\{i:h_i=0\}\) coincides with the set of facets containing some face of \(P^n\).

Equivalently,
\[
h\in\mathcal Z_P^{\mathbb H}
\quad\Longleftrightarrow\quad
\mu_{\mathbb H}(h)\in i_P(P^n).
\]

Hence
\[
\mathcal Z_P^{\mathbb H}
=
\mu_{\mathbb H}^{-1}(i_P(P^n))
\]
which is the pullback of \(\mu_{\mathbb H}\) along \(i_P\). The construction is evidently \(Q^m\)-equivariant.

Restricting the above quotient description of $\mathbb H^m$ to $\mu_{\mathbb H}^{-1}(i_P(P^n))$ gives $$\mathcal Z_P^{\mathbb H} \cong P^n\times Q^m/\!\sim.$$

Finally, the coordinates of the affine embedding $i_P:P^n\hookrightarrow \mathbb R_{\ge0}^m$ vanish precisely on the facets containing a point. Hence $I(i_P(p))=I_p$, and therefore
\[
(p,q_1)\sim(p,q_2)
\quad\Longleftrightarrow\quad
q_1^{-1}q_2\in Q^{I_p}.
\]
\end{proof}

\begin{remark} The above construction can be considered as an analogue of the identification space construction, which goes back to the work of Vinberg \cite{Vinberg} on Coxeter groups and was presented in the above form in the work of Davis and Januszkiewicz \cite{dj}. 
\end{remark}

\subsubsection{Quoric manifolds}
\label{subsec:quoric_manifolds}

We now introduce the quaternionic analogue of quasitoric manifolds \cite{dj}, namely \emph{quoric manifolds}. This subsection is a brief summary of \cite{ho}. There is a natural action of $Q^n$ on $\mathbb H^n$ given by coordinatewise left multiplication,
\[
((s_1,\ldots,s_n),(h_1,\ldots,h_n))
\longmapsto
(s_1h_1,\ldots,s_nh_n).
\]
We refer to this as the \emph{standard $Q^n$-action} on $\mathbb H^n$.

\medskip

Such actions have orbit space \(\mathbb R_{\ge0}^n\), and their orbit types are indexed by the faces of the positive cone. In detail, conjugacy classes are naturally associated with the faces of the orbits. We refer to these classes as \emph{isotropy classes}.

\medskip
In \cite{ho}, Hopkinson considers general actions given by coordinatewise conjugation, which generalize the standard coordinatewise multiplication action. Let $Q^n$ act on $\HH^n$,
\begin{equation} \label{eq:general_action}
((s_1,...,s_n),(h_1,...,h_n))\mapsto (s_{l_1}h_1s_{r_1}^{-1},...,s_{l_n}h_ns_{r_n}^{-1})
\end{equation} 
where each subscript is the index of some coordinate subgroup $Q_k$ (i.e. $Q_k=1\times\cdots\times 1\times Q\times 1\times\cdots 1$, with $Q$ in the k-th position) of $Q^n$ or is the empty set, with $s_{\emptyset}:=1$.

These general actions include the standard coordinatewise left multiplication action as a special case. The choice \(l_i=i\) and \(r_i=\emptyset\) for all $i$ recovers the coordinatewise action
\[
(s_1,\ldots,s_n)\cdot(h_1,\ldots,h_n)=(s_1h_1,\ldots,s_nh_n).
\]

Hopkinson observed that the essential feature is not the specific coordinatewise multiplication but the correspondence between faces of the orbit space and isotropy classes. This leads to the notion of an isotropy functor. Associated to any action of the form \eqref{eq:general_action} is an assignment
\[
\ell: \{\text{faces of }\mathbb R_{\ge0}^n\} \to \{\text{isotropy classes of subgroups of }Q^n\},
\]
called the \emph{isotropy functor}.

\begin{definition}\label{def:acceptability}
An isotropy functor \(\ell\) associated to an action of the form
\eqref{eq:general_action} is called \emph{acceptable} if \(\ell\) is
injective.
\end{definition}

\begin{remark}\label{rmk:acceptability_rank_incidence}
Although Definition~\ref{def:acceptability} is stated in terms of injectivity, the functor \(\ell\) is not an arbitrary map from the face poset of \(\mathbb R_{\ge0}^{n}\) to isotropy classes. It is the isotropy functor arising from an action of the form \eqref{eq:general_action}. For such actions, the rank and incidence behaviour of the isotropy classes is already built into the local model: points lying over the relative interior of a codimension-\(k\) face have isotropy of rank \(k\), and inclusions of faces correspond to inclusions of the associated isotropy classes. Thus these rank and incidence conditions are not extra hypotheses in the definition of acceptability; they are consequences of the fact that \(\ell\) comes from the orbit-type stratification of a quaternionic corner.

The additional condition imposed by acceptability is precisely injectivity.  Without injectivity, two distinct faces could determine the same isotropy class, so the face stratification of the orbit space would not be faithfully recorded by isotropy data.  Acceptability rules out this degeneracy.  This is the sense in which acceptable isotropy functors give the quaternionic analogue of the nonsingularity condition for characteristic data in the classical toric setting; see \cite[Proposition~3.1.23]{ho} and the equivalent formulations in \cite[p.~60]{ho}.
\end{remark}

\begin{remark}
In the above Definition \ref{def:acceptability}, injectivity alone is not enough to discuss the relation between quoric manifolds and the quaternionic moment--angle constructions. We will introduce the notion of global characteristic functions, following \cite[\S 5]{ho} in the next subsection.
\end{remark}

\begin{definition}
An action of the form \eqref{eq:general_action} whose isotropy functor is acceptable is called a \emph{regular quaternionic action}.
\end{definition}

\begin{remark}
Regular actions retain the same orbit-space structure and orbit-type stratification, as the standard actions (see \cite[Proposition 3.1.23]{ho}). In particular, their orbit space is naturally identified with \(\mathbb R_{\ge0}^n\), and orbit types are indexed by the faces of \(\mathbb R_{\ge0}^n\).
\end{remark}

\begin{definition}\label{def:regular_corner}
The space \(\mathbb H^n\) equipped with a regular quaternionic action is called a \emph{regular \(Q^n\)-corner}.
\end{definition}

Let $M^{4n}$ be a topological manifold equipped with a continuous $Q^n$-action.

\begin{definition}\label{def:loc_reg}
An action of \(Q^{n}\) on a  \(4n\)-manifold \(M\) is called \emph{locally regular} if every point \(x\in M\) admits a \(Q^{n}\)-invariant open neighbourhood \(U_x\subseteq M\), an open \(Q^{n}\)-invariant subset \(V\) of a regular \(Q^n\)-corner, and an equivariant homeomorphism $\varphi:U_x\longrightarrow V$.
\end{definition}

We now define quoric manifolds.

\medskip
\begin{definition}\label{def:quoric}
Let $P^n$ be a simple polytope. A \emph{quoric manifold} over $P^n$ is a $4n$-dimensional manifold $M^{4n}$ equipped with a $Q^n$-action such that:
\begin{itemize}
    \item[(1)] the action of $Q^n$ on $M^{4n}$ is locally regular;
    \item[(2)] there is a continuous projection
    \[
    \pi:M^{4n}\to P^n
    \]
    whose fibres are precisely the $Q^n$-orbits.
\end{itemize}
\end{definition}

Thus the orbit space is locally modelled on \(\mathbb R_{\ge0}^n\), and hence inherits the structure of a manifold with corners. By \cite[Definition~4.2.3]{ho}, the orbit-type data of a quoric manifold are encoded by a characteristic function
\[
    \lambda:
    \{\text{faces of }P\}
    \longrightarrow
    \{\text{isotropy classes of subgroups of }Q^n\}.
\]
The pair \((P,\lambda)\) is called a \emph{characteristic pair}. Given such data, one forms the \emph{canonical model}
\[
    M(P,\lambda)
    =
    (Q^n\times P)/{\sim_\lambda}.
\]
For \(x\in P\), let \(F(x)\) be the unique face whose relative interior contains \(x\), and choose a representative subgroup
\[
    \widehat{\lambda}(F(x))\subset Q^n
\]
of the isotropy class assigned to \(F(x)\).  The equivalence relation is
defined by
\[ (q,x)\sim_\lambda(q',x') \quad\Longleftrightarrow\quad x=x'
    \text{ and }
    q^{-1}q'\in \widehat{\lambda}(F(x)).
\]
The group \(Q^n\) acts on \(M(P,\lambda)\) by left multiplication on the first factor, and the orbit space is naturally identified with \(P\).

The canonical model construction exhausts all quoric manifolds (see \cite[Proposition 4.2.10]{ho}).

\begin{example}
The quaternionic projective space \(\mathbb H P^n\), equipped with its standard \(Q^n\)-action, is a quoric manifold over the simplex \(\Delta^n\).
\end{example}

\subsubsection{Quaternionic moment--angle manifolds as principal quaternionic torus-bundles}
\label{sec:ma_principal_Q_bundle}

We now explain the principal-bundle interpretation of the relationship between quaternionic moment--angle manifolds and quoric manifolds. This is the quaternionic analogue of the Yoshida-type viewpoint used in the complex case (see details in \cite{bg26}): one starts with a space carrying a large group action and then obtains the locally regular quotient by dividing out a freely acting kernel subgroup.

In the quaternionic setting, however, an additional point must be taken into account. The acting group $Q^m=(S^3)^m$ is non-abelian, so the passage from characteristic data to a group homomorphism
\[
    Q^m\longrightarrow Q^n
\] is not formal (see \cite[\S 5]{ho}).  Thus, unlike the complex case, a characteristic matrix does not automatically define a homomorphism of acting groups. The existence of such a homomorphism, and hence of a principal \(Q^{m-n}\)-bundle
\[
    Q^{m-n}\longrightarrow \mathcal Z_P^{\mathbb H}
    \longrightarrow M,
\]
requires additional compatibility conditions on the quoric characteristic data.

The required compatibility is the \emph{globality} condition introduced by Hopkinson. It is precisely this condition that allows the characteristic data of a quoric manifold to be lifted to the canonical \(Q^m\)-action on the quaternionic moment--angle manifold.

\begin{definition}[Global characteristic function {\cite[Definition~5.1.1]{ho}}]
A characteristic function
\[
\lambda:\{\emph{faces of }P^n\}\longrightarrow
\{\emph{isotropy classes of subgroups of }Q^n\}
\]
is called \emph{global} if, for every pair of facets \(F_a,F_b\subset P^n\) (not necessarily meeting at a common vertex), the isotropy classes \(\lambda(F_a)\) and \(\lambda(F_b)\) are compatible.
\end{definition}

\begin{remark}
Compatibility in the above Definition means that, after choosing representatives of the corresponding conjugacy classes, the associated coordinate quaternionic subtori may be simultaneously realized inside a common quaternionic torus \(Q^m\); equivalently, their indexing subsets are either disjoint or one is contained in the other (cf.~\cite[Definition~2.1.14]{ho}).
\end{remark}

Under this hypothesis, in \cite[Corollary~5.1.4]{ho} Hopkinson shows that the characteristic data can be realized by a homomorphism
\[
\Lambda:Q^m\longrightarrow Q^n
\]
whose kernel is a quaternionic subtorus
\[
K_\Lambda=\ker(\Lambda)\cong Q^{m-n}.
\]

This provides precisely the additional structure needed to relate quaternionic moment–angle manifolds and quoric manifolds.

\begin{prop}[Quaternionic moment--angle/quoric quotient]
\label{prop:ma_quoric_quotient}
Let \(P^n\) be a simple polytope with \(m\) facets, and let \(M^{4n}\) be a quoric manifold over \(P\) whose characteristic function \(\lambda\) is global. Then there exists a surjective homomorphism $\Lambda:Q^m\longrightarrow Q^n$ with kernel $K_\Lambda=\ker(\Lambda)\cong Q^{m-n}$, such that \(K_\Lambda\) acts freely on \(\mathcal Z_P^{\mathbb H}\) and \(\mathcal Z_P^{\mathbb H}/K_\Lambda \cong M^{4n}\). Equivalently, the quotient map is a principal \(K_\Lambda\)-bundle
\[
    K_\Lambda
    \longrightarrow
    \mathcal Z_P^{\mathbb H}
    \longrightarrow
    M^{4n}.
\]
\end{prop}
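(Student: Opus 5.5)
The plan is to run the argument used in the complex case, with the characteristic matrix replaced by the homomorphism supplied by Hopkinson's globality condition.

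\emph{Step 1: the homomorphism.} Since $\lambda$ is global, \cite[Corollary~5.1.4]{ho} gives a homomorphism $\Lambda:Q^m\to Q^n$ realizing the characteristic data, with kernel $K_\Lambda\cong Q^{m-n}$. More concretely, $\Lambda$ sends each coordinate subgroup $Q_j$ (the $j$-th facet coordinate) onto a representative $\widehat\lambda(F_j)$ of the isotropy class assigned to $F_j$. Surjectivity would follow by restricting $\Lambda$ to the $n$ coordinate factors indexed by the facets at a vertex $v=F_{i_1}\cap\cdots\cap F_{i_n}$. Acceptability, in the form of \cite[Proposition~3.1.23]{ho}, says the images $\widehat\lambda(F_{i_1}),\dots,\widehat\lambda(F_{i_n})$ generate $Q^n$, and in fact that the restriction $Q^{I_v}\to Q^n$ is an isomorphism. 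The kernel then has dimension $3m-3n$, and globality ensures it is the subtorus $Q^{m-n}$ rather than merely a closed subgroup of that dimension.

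\emph{Step 2: freeness.} Use the model $\mathcal Z_P^{\mathbb H}\cong P\times Q^m/\!\sim$. The isotropy group of a point over $p$ is the coordinate subtorus $Q^{I_p}$. Since $K_\Lambda$ is normal, freeness reduces to showing $K_\Lambda\cap Q^{I_p}=1$ for every $p$. Each $I_p$ is contained in some $I_v$ for a vertex $v$, and $\Lambda|_{Q^{I_v}}$ is injective by Step 1. Hence $\ker\Lambda\cap Q^{I_v}=1$, which gives the required trivial intersection. I expect this to be the main obstacle. In the non-abelian setting the injectivity of $\Lambda$ on $Q^{I_v}$ is not automatic from the injectivity of the isotropy functor, because the local action can be by two-sided conjugation $s_{l}hs_r^{-1}$. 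I would verify it directly from Hopkinson's normal form of a global characteristic function. In that normal form, after reordering, each $\widehat\lambda(F_j)$ is a diagonal or coordinate copy of $Q$ indexed by nested-or-disjoint subsets, and an induction on the nesting shows that the $n$ subsets at a vertex determine a triangular (hence invertible) map $Q^n\to Q^n$.

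\emph{Step 3: identifying the quotient.} By freeness and compactness, the quotient map $\mathcal Z_P^{\mathbb H}\to \mathcal Z_P^{\mathbb H}/K_\Lambda$ is a principal $K_\Lambda$-bundle, by Gleason's slice theorem for free actions of compact Lie groups. The quotient is
\[
\mathcal Z_P^{\mathbb H}/K_\Lambda\cong (P\times Q^m/K_\Lambda)/\!\sim\;\cong\;(P\times Q^n)/\!\sim',
\]
where $(p,s)\sim'(p,s')$ if and only if $s^{-1}s'\in\Lambda(Q^{I_p})=\widehat\lambda(F(p))$. This is exactly the canonical model $M(P,\lambda)$, and it is $Q^n=Q^m/K_\Lambda$-equivariant. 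By \cite[Proposition~4.2.10]{ho}, $M^{4n}$ is equivariantly homeomorphic to its canonical model, which gives $\mathcal Z_P^{\mathbb H}/K_\Lambda\cong M^{4n}$. As a dimension check, $(3m+n)-3(m-n)=4n$.
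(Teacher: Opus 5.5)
\textbf{There is a genuine gap, and it sits in the structure you attribute to $\Lambda$ in Steps~1--2.} It fails for a reason specific to the non-abelian setting.

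Distinct coordinate factors $Q_j,Q_k\subset Q^m$ commute, so $\Lambda(Q_j)$ and $\Lambda(Q_k)$ must commute elementwise. Every continuous homomorphism $Q\to Q$ is trivial or an automorphism: its kernel is $1$, $\{\pm1\}$ or $Q$, and $\mathrm{SO}(3)$ does not embed in $Q$. Two surjections onto $Q$ with commuting images would force $Q$ to be abelian. Hence for each coordinate $i$ of $Q^n$, at most one factor $Q_j$ has nontrivial $i$-th component. This has three consequences:
\begin{enumerate}
\item[(a)] the images of the $Q_j$ have pairwise disjoint supports in $[n]$;
\item[(b)] at most $n$ of the $m$ factors map nontrivially;
\item[(c)] $\ker\Lambda=\prod_{j\in J}Q_j$ is a coordinate subtorus with $|J|\ge m-n\ge 1$.
\end{enumerate}
So $\Lambda$ cannot map every $Q_j$ onto the $3$-dimensional group $\widehat\lambda(F_j)$. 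Nor can $\Lambda|_{Q^{I_v}}$ be injective for every vertex $v$, since every facet contains a vertex, and that injectivity is exactly what your freeness argument needs.

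Your proposed triangular verification does not rescue this. A map like $(a,b)\mapsto(a,ab)$ is a homeomorphism $Q^2\to Q^2$ but not a homomorphism, so it yields no kernel. Step~3 carries the same presupposition: the description $(P\times Q^n)/\!\sim'$ with $s^{-1}s'\in\Lambda(Q^{I_p})$ uses $Q^m/K_\Lambda\cong Q^n$ as groups.

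\textbf{This cannot be repaired by a better argument within the stated formulation.} Any $K_\Lambda\cong Q^{m-n}$ arising as a kernel is a connected closed normal subgroup of $Q^m$. Since the Lie algebra of $Q^m$ is a sum of $m$ simple ideals, such a subgroup is a product of coordinate factors. So it contains some $Q_j$, and under the canonical left action $Q_j$ fixes every point of $\mathcal Z_P^{\mathbb H}$ over $F_j$, where $h_j=0$.

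Already for $P=\Delta^1$, every surjection $Q^2\to Q$ has kernel $Q\times 1$ or $1\times Q$. These fix the points of $S^7$ with $h_1=0$ or $h_2=0$, respectively. The Hopf bundle $S^3\to S^7\to\mathbb H P^1$ instead uses the diagonal $S^3$ acting by right multiplication. That subgroup is not normal in $Q^2$ and commutes with the left $Q^2$-action. The quoric $Q$-action on $\mathbb H P^1$ then comes from a left coordinate factor, not from a quotient group $Q^2/K$.

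The paper's proof never meets this obstruction because it consists only of citations to Hopkinson: Corollary~5.1.4 and Propositions~4.2.10, 5.2.4, 5.2.6. Your attempt to supply the mechanism is precisely what exposes it. A correct formulation has to work with a free subgroup $K\cong Q^{m-n}$ that is not the kernel of any homomorphism out of $Q^m$, as in the Hopf example. The quoric $Q^n$-action then cannot be the induced $Q^m/K$-action.
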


\begin{proof}
By Hopkinson's classification theorem for quoric manifolds \cite[Proposition~4.2.10]{ho}, the quoric manifold \(M^{4n}\) is \(Q^n\)-equivariantly homeomorphic to the canonical model
\[
    \mathcal M(P,\lambda)
    =
    (Q^n\times P)/{\sim_\lambda}.
\]

Since the characteristic function \(\lambda\) is global, \cite[Corollary~5.1.4]{ho} implies that the characteristic data are induced by an acceptable isotropy functor on the set of facets of \(P\). Equivalently, there is a surjective homomorphism
\[
    \Lambda:Q^m\longrightarrow Q^n
\]
whose kernel is a quaternionic subtorus
\[
    K_\Lambda=\ker(\Lambda)\cong Q^{m-n}.
\]

By \cite[Propositions~5.2.4 and~5.2.6]{ho} we have that \(K_\Lambda\) acts freely on the quaternionic moment--angle manifold \(\mathcal Z_P^{\mathbb H}\), and that the quotient is naturally \(Q^n\)-equivariantly homeomorphic to the canonical model:
\[
    \mathcal Z_P^{\mathbb H}/K_\Lambda
    \cong
    \mathcal M(P,\lambda).
\]
Combining this with the classification homeomorphism
\[
    \mathcal M(P,\lambda)\cong M^{4n}
\]
gives
\[
    \mathcal Z_P^{\mathbb H}/K_\Lambda
    \cong
    M^{4n}.
\]
Since \(K_\Lambda\) acts freely and is a compact Lie group, the quotient map
\[
    \mathcal Z_P^{\mathbb H}
    \longrightarrow
    \mathcal Z_P^{\mathbb H}/K_\Lambda
\]
is a principal \(K_\Lambda\)-bundle.  Therefore
\[
    K_\Lambda
    \longrightarrow
    \mathcal Z_P^{\mathbb H}
    \longrightarrow
    M^{4n}
\]
is a principal \(Q^{m-n}\)-bundle.
\end{proof}

\begin{remark}
Globality is an essential hypothesis. Indeed, Hopkinson proves \cite[Proposition~5.2.7]{ho} that a quoric manifold arises as the quotient of a quaternionic moment--angle manifold by a free quaternionic subtorus if and only if its characteristic function is global.
\end{remark}

\begin{example}[The quaternionic Hopf fibration]
Let \(P=\Delta^n\) be the \(n\)-simplex.  Then \(P\) has \(m=n+1\) facets, and the associated quaternionic moment--angle manifold is
\[
    \mathcal Z_{\Delta^n}^{\mathbb H}
    \cong
    S^{4n+3}.
\]
Indeed,
\[
    K_{\Delta^n}=\partial\Delta^n,
\]
and therefore
\[
    \mathcal Z_{\Delta^n}^{\mathbb H}
    =
    (D^4,S^3)^{\partial\Delta^n}
    =
    \partial(D^4)^{n+1}
    \cong
    S^{4n+3}.
\]

The quaternionic projective space \(\mathbb H P^n\) is the standard quoric manifold over \(\Delta^n\). Since \(m-n=1\), the kernel of the corresponding characteristic homomorphism is
\[
    K_\Lambda\cong Q=S^3.
\]
Thus the principal bundle above becomes
\[
    S^3
    \longrightarrow
    S^{4n+3}
    \longrightarrow
    \mathbb H P^n,
\]
which is precisely the \emph{quaternionic Hopf fibration}.

In particular, for \(n=1\), we recover
\[
    S^3
    \longrightarrow
    S^7
    \longrightarrow
    \mathbb H P^1\cong S^4.
\]
\end{example}

\section{Reduction to rigidity of quasitoric and quoric manifolds}
\label{sec:reduction}

The topological classification results proved in this paper rely on the equivariant rigidity theorems for quasitoric and quoric manifolds established in \cite{mp} and \cite{gp}, respectively. In this section we recall the relevant constructions and explain how rigidity of the quotient spaces reduces the classification of moment--angle manifolds to the comparison of the associated principal bundles.

We now recall the notion of topological rigidity, which provides the conceptual framework for our approach (see \cite[Def.~7.1]{luck25}).

\begin{definition}[Topological Rigidity]
\label{def:rigidity}
Let $N$ be a closed topological manifold. We say that $N$ is \emph{topologically rigid} if every homotopy equivalence 
\[
f : M \longrightarrow N
\]
with $M$ a closed topological manifold as source is homotopic to a homeomorphism. 
\end{definition}

\begin{example}
The sphere $S^n$ is the prototypical example of a topologically rigid manifold.
The classical Poincaré Conjecture asserts that the $n$--sphere $S^n$ is topologically rigid.
\end{example}

\begin{remark}
Topological rigidity provides a broad generalization of the Poincar\'e Conjecture to manifolds with nontrivial fundamental groups. Classical rigidity results include:
\begin{itemize}
    \item[(1)] The Borel Conjecture for aspherical manifolds, asserting that manifolds with contractible universal cover are topologically rigid.
    \item[(2)] The Farrell--Jones Rigidity Theorem for negatively curved manifolds.
\end{itemize}
\end{remark}

Our goal is to establish a similar rigidity property for moment--angle manifolds, which in general are not aspherical. More precisely, we need the corresponding equivariant version for locally standard quotient manifolds.

\begin{definition}[Equivariant topological rigidity]
\label{def:eq_rigidity}
Let \(G\) be a compact Lie group, and let \(N\) be a closed \(G\)-manifold. We say that \(N\) is \emph{\(G\)-topologically rigid} if every \(G\)-equivariant homotopy equivalence
\[
    f:M\longrightarrow N
\]
from a closed \(G\)-manifold \(M\) is \(G\)-homotopic to a \(G\)-homeomorphism.
\end{definition}

The form of rigidity used in this paper is slightly weaker than the usual formulation of equivariant topological rigidity: we do not require the given equivariant homotopy equivalence itself to be equivariantly homotopic to a homeomorphism, but only that the existence of such an equivariant homotopy equivalence forces the two \(G\)-manifolds to be \(G\)-equivariantly homeomorphic. Thus this is a homeomorphism-type rigidity statement rather than a homotopy-to-homeomorphism statement.

\begin{definition}[Equivariant rigidity up to homeomorphism]
\label{def:eq_rigidity_weak}
Let \(G\) be a compact Lie group, and let \(N\) be a closed \(G\)-manifold. We say that \(N\) is \emph{\(G\)-rigid up to homeomorphism} if every \(G\)-equivariant homotopy equivalence
\[
    f:M\longrightarrow N
\]
from a closed \(G\)-manifold \(M\) implies the existence of a \(G\)-equivariant homeomorphism
\[
    h:M\longrightarrow N.
\]
\end{definition}

As a preliminary step, we recall that the quasitoric and quoric manifolds are rigid. Since the arguments in the two cases are closely related, we discuss them in parallel. The main difference arises from the noncommutativity of the quaternionic torus $(S^3)^n$, which makes the representation theory more involved than in the classical toric case.

\medskip

Throughout this section $G$ will denote either the torus $T^n=(S^1)^n$ or the quaternionic torus $Q^n=(S^3)^n$. The following Definition as well as details on locally linear actions can be found in \cite[p.171]{bredon}.

\begin{definition}
Let $N$ be a manifold with an action of a compact Lie group $G$. The action is called \emph{locally linear} if for every point $y\in N$ with isotropy subgroup $G_y$ there exists a $G_y$--invariant neighbourhood of $y$, a linear representation $V$ of $G_y$, and an open $G$--equivariant embedding
\[
G\times_{G_y} V \longrightarrow N .
\]
The image of such an embedding is called a \emph{linear tube}.
\end{definition}

Locally standard torus actions and locally regular quaternionic actions are locally linear (see \cite{mp,gp}). This condition allows one to analyze the orbit structure of the action using linear representations of the isotropy subgroups.

\medskip
Let $\mathcal Z_P$ be a moment--angle manifold equipped with its natural $G$--action, and let $N$ be a locally linear $G$--manifold. Suppose that there exists a $G$--equivariant homotopy equivalence
\[
f:N\longrightarrow \mathcal Z_P .
\]

Now we show that the orbit--type structure of $\mathcal Z_P$ transfers to $N$. In particular, the isotropy subgroups and the local linear models of the action on $N$ coincide with those of the model space.

\begin{prop}\label{prop:isotropy_transfer}
Let \(\mathcal Z_P\) be a moment--angle manifold with the canonical action of \(G=T^m\) respectively \(G=Q^m\), and let \(N\) be a locally linear \(G\)-manifold. Suppose that
\[
    f:N\longrightarrow \mathcal Z_P
\]
is a \(G\)-equivariant homotopy equivalence. Then the following hold:
\begin{enumerate}
\item[(1)] the \(G\)-action on \(N\) is effective;
\item[(2)] if \(H\) is an isotropy subgroup occurring in \(N\), then \(H\) is contained in an isotropy subgroup occurring in the canonical \(G\)-action on \(\mathcal Z_P\).
\end{enumerate}
\end{prop}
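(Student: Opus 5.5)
The key tool for both statements is the elementary fact that an equivariant map can only enlarge isotropy: if \(f(g\cdot y)=g\cdot f(y)\) for all \(g\in G\), then \(G_y\subseteq G_{f(y)}\) for every \(y\in N\). Part (2) follows from this directly, and part (1) will follow by combining it with a description of the isotropy groups of the canonical action on \(\mathcal Z_P\) coming from the identification-space models. So the plan is to first record the isotropy structure of \(\mathcal Z_P\), and then push it back to \(N\) along \(f\).

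\textbf{Step 1: isotropy of \(\mathcal Z_P\).} By the quotient descriptions \(\mathcal Z_P\cong (T^m\times P)/{\sim}\) and \(\mathcal Z_P^{\mathbb H}\cong (P\times Q^m)/{\sim}\), the stabiliser of the class of \((t,x)\) (respectively \((x,q)\)) is a conjugate of the coordinate subgroup \(T_{F(x)}\) (respectively \(Q^{I_x}\)). In the complex case it is exactly \(T_{F(x)}\), since \(T^m\) is abelian. These groups consist of elements with \(t_j=1\) (respectively \(q_j=1\)) for all \(j\) outside the set of facets containing \(x\). Since \(P\) is simple, each point lies on at most \(n<m\) facets, so every such subgroup is a proper coordinate subgroup. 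Over an interior point of \(P\) the isotropy is trivial.

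\textbf{Step 2: part (2).} Let \(H=G_y\) for some \(y\in N\). By the observation above, \(H\subseteq G_{f(y)}\), and by Step 1 \(G_{f(y)}\) is an isotropy subgroup of the canonical action on \(\mathcal Z_P\). This proves (2).

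\textbf{Step 3: part (1).} Consider the kernel \(\bigcap_{y\in N}G_y\) of the action on \(N\). By Step 2 it is contained in \(\bigcap_{y\in N}G_{f(y)}\). The task is to show this intersection is trivial, and this is where the homotopy-equivalence hypothesis, not just equivariance, has to be used.
\begin{itemize}
\item I would first use that \(f\) is surjective. A homotopy equivalence between closed manifolds of the same dimension has degree \(\pm1\), hence is onto. If \(f\) is onto, then \(f(N)\) contains points over the interior of \(P\), which have trivial isotropy by Step 1, so the kernel is trivial.
\item The difficulty is that \(\dim N\) is not a priori known. To handle this, I would use the \(G\)-homotopy inverse \(g:\mathcal Z_P\to N\). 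Equivariance of \(g\) gives \(G_z\subseteq G_{g(z)}\) for every \(z\in\mathcal Z_P\). Alternatively one can argue directly: a normal subgroup \(A\) acting trivially on \(N\) makes \(f\circ g\), which is \(G\)-homotopic to the identity, factor through \(N=N^A\), and hence land in \(\mathcal Z_P^A\).
\item So \(\mathrm{id}_{\mathcal Z_P}\) would be \(G\)-homotopic to a map into the fixed set \(\mathcal Z_P^A\). An equivariant homotopy preserves \(A\)-fixed points setwise, but its time-one image lies in \(\mathcal Z_P^A\). Evaluating at a point \(z\) with trivial isotropy, the homotopy \(h_s\) satisfies \(a\,h_s(z)=h_s(az)\). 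The cleanest way to turn this into a contradiction is to apply the functor \((-)^A\) to \(f\). Since \(f\) is a \(G\)-homotopy equivalence, \(f^A:N^A\to\mathcal Z_P^A\) is a homotopy equivalence. But \(N^A=N\simeq\mathcal Z_P\), whereas \(\mathcal Z_P^A\) is a proper union of lower-dimensional submanifolds, namely the points whose coordinates vanish on the support of \(A\), or is empty.
\item To conclude, I would compare top-dimensional mod \(2\) homology, using \(H_{\dim\mathcal Z_P}(\,\cdot\,;\mathbb Z/2)\). This is nonzero for the closed manifold \(\mathcal Z_P\) and zero for \(\mathcal Z_P^A\), which has dimension smaller than \(\dim\mathcal Z_P\). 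This contradiction forces \(A\) to be trivial.
\end{itemize}

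\textbf{Expected obstacle.} The main point is to justify that \(f^A\) is a homotopy equivalence and to control \(\dim\mathcal Z_P^A\) for a general closed normal subgroup \(A\), which need not be a coordinate subgroup in the quaternionic case. In the quaternionic case I would reduce to a nontrivial element \(a\in A\) and use the closed subgroup it generates. Since \(a\) is nontrivial, it has some coordinate \(a_j\neq1\). Then \(a\) fixes only points with \(h_j=0\) in the left-multiplication model, because left multiplication by \(a_j\neq1\) has no fixed points on \(S^3\). So \(\mathcal Z_P^{\langle a\rangle}\) lies in the codimension-\(4\) set \(\{h_j=0\}\), which gives the required dimension drop.
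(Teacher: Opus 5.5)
Your proof is correct, and it differs from the paper's in two places.

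\textbf{Part (2).} The paper uses the fixed-point functor here. With \(H=G_x\), it invokes that \(f^H:N^H\to\mathcal Z_P^H\) is a homotopy equivalence to get \(\mathcal Z_P^H\neq\varnothing\), and hence \(H\subseteq G_z\) for some \(z\). Your observation \(G_y\subseteq G_{f(y)}\) uses only equivariance, needs no homotopy hypothesis at all, and names the isotropy group that contains \(H\).

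\textbf{Part (1).} Your final route is the paper's. Take a nontrivial \(a\) acting trivially on \(N\), set \(H=\overline{\langle a\rangle}\), and use that \(f^H:N=N^H\to\mathcal Z_P^H\) is a homotopy equivalence. The paper then only remarks that \(\mathcal Z_P^H\) is a proper subspace and declares a contradiction. You supply the input that makes this rigorous. By the coordinatewise description, \(\mathcal Z_P^H\) is either empty or the set of points whose coordinates vanish on \(\mathrm{supp}(a)\), i.e.\ the preimage of the face \(\bigcap_{j\in\mathrm{supp}(a)}F_j\) under the orbit map. This is a closed submanifold of codimension \(2|\mathrm{supp}(a)|\), respectively \(4|\mathrm{supp}(a)|\). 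Hence \(H_{\dim\mathcal Z_P}(\mathcal Z_P^H;\mathbb Z/2)=0\neq H_{\dim\mathcal Z_P}(\mathcal Z_P;\mathbb Z/2)\).

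\textbf{Your side remark.} The argument in your second bullet is worth keeping. Equivariance gives \(f(N)=f(N^A)\subseteq\mathcal Z_P^A\), and \(\mathrm{id}\simeq f\circ g\). So the identity of \(H_{\dim\mathcal Z_P}(\mathcal Z_P;\mathbb Z/2)\) factors through zero. For this, an ordinary homotopy \(f\circ g\simeq\mathrm{id}\) suffices, so it works even if \(f\) is only an equivariant map that is a nonequivariant homotopy equivalence. The fixed-point route, by contrast, needs a genuine \(G\)-homotopy inverse.

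\textbf{What to drop.} You can remove the surjectivity bullet. It is not needed, and since \(N\) is not assumed closed in this proposition, the degree argument would not be available anyway.
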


\begin{proof}
\begin{enumerate}
\item[(1)]
Suppose that some nontrivial element \(g\in G\) fixes \(N\) pointwise. Let \(H=\overline{\langle g\rangle}.\) We take the closure because fixed-point-set results for compact Lie group actions are applied to closed subgroups. Since the action is continuous and \(g\) fixes \(N\) pointwise, every element of \(H\) fixes \(N\) pointwise. Hence \(N^H=N.\) Since \(f\) is a \(G\)-equivariant homotopy equivalence and \(H\) is closed, it induces a homotopy equivalence
\[
f^H:N^H\longrightarrow \mathcal Z_P^H
\]
by \cite[Cor.~II.5.5]{bredon}. Thus \(\mathcal Z_P^H\) is homotopy equivalent to \(N\). But \(H\) is nontrivial and the canonical \(G\)-action on \(\mathcal Z_P\) is effective, so \(\mathcal Z_P^H\) is a proper fixed-point subspace of \(\mathcal Z_P\). This contradicts \(N^H=N\). Hence the action on \(N\) is effective.

\item[(2)]
Let \(x\in N\), and let \(H=G_x\). Since the action is locally linear, \(H\) is a closed subgroup of \(G\). Moreover, \(x\in N^H\), so \(N^H\neq\varnothing\). Applying \cite[Cor.~II.5.5]{bredon} again, we obtain a homotopy equivalence
\[
f^H:N^H\longrightarrow \mathcal Z_P^H.
\]
Hence \(\mathcal Z_P^H\neq\varnothing\). Therefore there exists a point \(z\in \mathcal Z_P\) fixed by \(H\), which means that
\[
    H\subseteq G_z.
\]
The isotropy subgroups \(G_z\) of the canonical \(G\)-action on \(\mathcal Z_P\) are the coordinate subgroups associated to faces of \(P\). Hence \(H\) is contained in a coordinate isotropy subgroup of the model action.
\end{enumerate}
\end{proof}

\begin{cor}\label{cor:isotropy_equal}
The isotropy subgroups of the $G$--actions on $N$ and $\mathcal Z_P$ are the same.
\end{cor}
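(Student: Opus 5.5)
The plan is to show that the two families of isotropy subgroups contain each other. Proposition~\ref{prop:isotropy_transfer}(2) already gives one direction up to containment: every isotropy group $H=G_x$ of $N$ lies in some coordinate isotropy group $G^{\sigma}$ of $\mathcal Z_P$. Here $G^{\sigma}$ is $T^\sigma$ or $Q^\sigma$, for $\sigma\in K_P$. What remains is to show two things. First, every coordinate subgroup $G^\sigma$ actually occurs as an isotropy group in $N$. Second, every isotropy group of $N$ is exactly some $G^\sigma$, and not just contained in one.

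For the first point, fix $\sigma\in K_P$. The fixed set $\mathcal Z_P^{G^\sigma}$ is nonempty, since it contains the points with $z_i=0$ for $i\in\sigma$. Because $f^{G^\sigma}:N^{G^\sigma}\to \mathcal Z_P^{G^\sigma}$ is a homotopy equivalence by \cite[Cor.~II.5.5]{bredon}, the set $N^{G^\sigma}$ is nonempty. A point $x\in N^{G^\sigma}$ has $G_x\supseteq G^\sigma$. By Proposition~\ref{prop:isotropy_transfer}(2), $G_x\subseteq G^\tau$ for some $\tau\in K_P$, so $\sigma\subseteq\tau$. Applying the same reasoning to $\tau$ (or choosing $\sigma$ maximal at the start) and inducting downward on the face poset, I aim to produce points whose isotropy is exactly $G^\sigma$.

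To pass from containment to equality, I will compare fixed sets of $H$ and of $G^\sigma$. I plan to argue by induction from the vertices of $P$, i.e.\ from maximal $\sigma$ with $|\sigma|=n$, toward smaller faces. Suppose $H=G_x\subseteq G^\sigma$, with $\sigma$ chosen minimal among the faces for which this holds. In the model, the fixed set is $\mathcal Z_P^H=\mathcal Z_P^{G^{\sigma(H)}}$, where $\sigma(H)$ is the set of coordinates $i$ for which the projection of $H$ to the $i$-th factor is nontrivial. This holds because a closed subgroup $H$ of $G^\sigma$ fixes exactly those points whose coordinates vanish wherever $H$ acts nontrivially on that coordinate. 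Then $N^H\simeq \mathcal Z_P^{G^{\sigma(H)}}$, which is itself a moment--angle manifold of the face $F_{\sigma(H)}$, a closed manifold of the appropriate dimension.

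Local linearity now enters. Near $x$, the set $N^H$ is locally $V^H$ inside the slice representation $V$ of $H$, while $N^{G^{\sigma(H)}}\subseteq N^H$. Comparing dimensions via the homotopy equivalences $N^{K}\simeq\mathcal Z_P^{K}$, where $K$ runs over $H$ and over the coordinate groups between them, and using that both sides have the same top $\mathbb Z/2$-homology (they are homotopy equivalent, so they have the same formal dimension), I expect to conclude that $N^H$ and $N^{G^{\sigma(H)}}$ coincide near $x$. That forces $G^{\sigma(H)}\subseteq G_x=H$, and since $H\subseteq G^{\sigma(H)}$ by definition, $H=G^{\sigma(H)}$.

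The main obstacle is this last step. It requires control of the dimensions of fixed-point components of $N$ from homotopy data. In the complex case, Smith theory or Borel localization for tori, together with local linearity, should suffice. In the quaternionic case, $H$ may be a non-toral closed subgroup of $Q^\sigma$, such as a finite group or a circle inside an $S^3$ factor. For such $H$, I would first reduce to maximal tori and $p$-subgroups and apply Smith theory to each factor. I would then check that the resulting fixed sets agree with those of the full coordinate factor, using that $(S^3)$-fixed points in $D^4$ and $S^1$-fixed points in $D^4$ both reduce to $\{0\}$.
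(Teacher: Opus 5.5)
You are right that Proposition~\ref{prop:isotropy_transfer} does not by itself give equality. The paper records this corollary with no further argument, but part~(2) only yields containment, so both of your extra steps are genuinely needed. Your key observation is also correct: $H\subseteq G^{\sigma(H)}$ and $\mathcal Z_P^H=\mathcal Z_P^{G^{\sigma(H)}}$, since a nontrivial unit complex number or unit quaternion fixes only $0$ under left multiplication. This is exactly why non-toral $H$ cause no difficulty.

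The gap is at the decisive step. You leave it at ``I expect to conclude'' and then propose Smith theory, Borel localization and a reduction to tori and $p$-subgroups. That route is unnecessary, and as sketched it is not an argument. The fixed-point homotopy equivalences of \cite[Cor.~II.5.5]{bredon} hold for \emph{every} closed subgroup, and that is all you need:
\begin{enumerate}
\item[(a)] $N^{G^{\sigma(H)}}\subseteq N^H$, and both are homotopy equivalent via $f$ to $\mathcal Z_P^{G^{\sigma(H)}}$, the preimage of the face $F_{\sigma(H)}$ under the orbit map.
\item[(b)] This preimage is a nonempty \emph{connected} closed manifold, being the image of $G\times F_{\sigma(H)}$; it is $\mathcal Z_{F_{\sigma(H)}}$ times a torus, not $\mathcal Z_{F_{\sigma(H)}}$ itself. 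Its dimension is $d=m+n-2|\sigma(H)|$, respectively $3m+n-4|\sigma(H)|$.
\item[(c)] By local linearity, using the tubes $G\times_{G_y}V$, every fixed set is a closed topological manifold. Hence $N^H$ and $N^{G^{\sigma(H)}}$ are connected closed manifolds, both of dimension $d$ by the $\mathbb Z/2$-argument of Lemma~\ref{lem:dim_match}.
\item[(d)] Invariance of domain makes $N^{G^{\sigma(H)}}$ open in $N^H$. It is also closed and nonempty, so $N^{G^{\sigma(H)}}=N^H\ni x$, i.e.\ $G^{\sigma(H)}\subseteq G_x=H$.
\end{enumerate}
This argument is global; no local analysis at $x$ is needed.

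Your first point is likewise not established. Taking $\sigma$ maximal handles only vertices. For smaller $\sigma$, ``inducting downward on the face poset'' gives nothing: knowing that all $G^\tau$ with $\tau\supsetneq\sigma$ occur does not prevent $N^{G^\sigma}$ from being exhausted by $\bigcup_{\tau\supsetneq\sigma}N^{G^\tau}$. The missing ingredient is again a dimension count, and the order should be reversed.
\begin{enumerate}
\item[(a)] First show, as above, that every isotropy group of $N$ is some $G^\rho$.
\item[(b)] Then any $x\in N^{G^\sigma}$ with $G_x\neq G^\sigma$ lies in $N^{G^\tau}$ for some $\tau\supsetneq\sigma$ with $\tau\in K_P$.
\item[(c)] These are finitely many closed submanifolds of dimension $d_\tau<d_\sigma=\dim N^{G^\sigma}$, with dimensions read off from $\mathcal Z_P^{G^\tau}$ as before. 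By invariance of dimension each has empty interior in $N^{G^\sigma}$, so their finite union cannot cover $N^{G^\sigma}$.
\item[(d)] A point outside this union has isotropy exactly $G^\sigma$.
\end{enumerate}
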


\medskip

In both cases there exists a subgroup $K \subset G$ acting freely on $\mathcal Z_P$ such that the quotient
\[
\mathcal Z_P/K
\]
is a quasitoric (respectively quoric) manifold over $P$.

Let $N$ be a locally linear $G$--manifold equipped with a $G$--equivariant homotopy equivalence
\[
f:N\longrightarrow \mathcal Z_P .
\]

The following Lemma is the consequence of the fixed-point argument that will be used in the sequel.

\begin{lemma}\label{lem:Kfree}
The subgroup $K$ acts freely on $N$.
\end{lemma}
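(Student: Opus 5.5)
The plan is to show that every isotropy group of the \(G\)-action on \(N\) meets \(K\) trivially. Since \(K\cap G_x\) is exactly the isotropy group of \(x\) for the restricted \(K\)-action, this is the same as saying that \(K\) acts freely on \(N\). We will use Proposition~\ref{prop:isotropy_transfer}(2): for \(x\in N\), the isotropy group \(H=G_x\) is closed, and the equivariant fixed-point equivalence \(f^H:N^H\to\mathcal Z_P^H\) of \cite[Cor.~II.5.5]{bredon} gives a point \(z\in\mathcal Z_P\) with \(H\subseteq G_z\).

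The model isotropy groups \(G_z\) are the coordinate subgroups \(T_{F}\) (resp.\ \(Q^{I_F}\)) attached to faces \(F\) of \(P\). By construction of the characteristic data, \(K\) meets each of them trivially. In the complex case this is the nonsingularity condition: \(\Lambda\) restricted to \(T_F\) is injective because the columns \(\lambda(F_{i_1}),\dots,\lambda(F_{i_k})\) span a rank-\(k\) direct summand. In the quaternionic case it is exactly the freeness statement of Proposition~\ref{prop:ma_quoric_quotient}, namely \cite[Prop.~5.2.4, 5.2.6]{ho} under globality. Hence
\[
K\cap G_x\subseteq K\cap G_z=\{1\},
\]
so every \(K\)-isotropy group on \(N\) is trivial, and \(K\) acts freely on \(N\).

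The one point that needs care is the application of the fixed-point equivalence for an arbitrary closed subgroup \(H\). We need \(f^H\) to be a homotopy equivalence, and in particular \(N^H\ne\varnothing\Rightarrow\mathcal Z_P^H\ne\varnothing\). This uses that \(f\) has a \(G\)-homotopy inverse \(g\): equivariant maps send \(H\)-fixed points to \(H\)-fixed points. Only this weak consequence is actually needed, not the full homotopy equivalence, and it holds for any equivariant map \(f\), since \(f(x)\) is fixed by \(G_x\). So the cleanest route avoids Bredon entirely: put \(z=f(x)\). Then \(G_x\subseteq G_{f(x)}\) directly, and \(K\cap G_x\subseteq K\cap G_{f(x)}=1\).
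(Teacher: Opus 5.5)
Your proposal is correct. Its final paragraph is the cleanest proof, and it reaches the result by a more elementary route than the paper.

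The paper argues by contradiction. If $1\neq k\in K$ fixes $x\in N$, it sets $H=\overline{\langle k\rangle}$ and invokes Bredon's Cor.~II.5.5 to obtain a homotopy equivalence $f^H:N^H\to\mathcal Z_P^H$. It concludes $\mathcal Z_P^H\neq\varnothing$, which contradicts freeness of $K$ on $\mathcal Z_P$.

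The only thing actually extracted from that homotopy equivalence is the nonemptiness of $\mathcal Z_P^H$. Your observation $G_x\subseteq G_{f(x)}$, that is $f(N^H)\subseteq\mathcal Z_P^H$, gives this directly from equivariance. So your argument shows that Lemma~\ref{lem:Kfree} holds for any $G$-map $N\to\mathcal Z$. None of the following is needed:
\begin{itemize}
\item the homotopy-equivalence hypothesis on $f$;
\item local linearity or compactness of $N$;
\item passing to the closure $\overline{\langle k\rangle}$;
\item Bredon's fixed-point theorem.
\end{itemize}
You also do not need the identification of the model isotropy groups with coordinate subgroups, or the nonsingularity computation from your first route. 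Only the standing hypothesis that $K$ acts freely on $\mathcal Z$ is used.

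The paper's formulation gains nothing for this lemma beyond uniformity with the fixed-point framework of Proposition~\ref{prop:isotropy_transfer}. Your first route, through part~(2) of that proposition with $H=G_x$, is valid but redundant once you have the direct argument.
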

\begin{proof}
Suppose that some nontrivial element $k\in K$ fixes a point $x\in N$. Let $H=\overline{\langle k\rangle}$. Then \(H\) is a closed subgroup of \(G\). Since the action is continuous, $N^H=N^{\langle k\rangle}$. Then $x\in N^H$, so the fixed point set $N^H$ is nonempty.

Since $f$ is a $G$--equivariant homotopy equivalence, it induces a homotopy equivalence
\[
f^H:N^H\longrightarrow \mathcal Z_P^H .
\]

However, by construction the subgroup $K$ acts freely on $\mathcal Z_P$, so
\[
\mathcal Z_P^H=\varnothing
\]
whenever $H$ is a nontrivial subgroup of $K$. This contradicts the fact that $N^H$ is nonempty. Therefore no nontrivial element of $K$ can fix a point of $N$, and the $K$--action on $N$ is free.
\end{proof}

Since the action of $K$ on both $N$ and $\mathcal Z_P$ is free, the map $f$ descends to a map between the quotient spaces.

\begin{prop}\label{prop:pushdown}
The equivariant homotopy equivalence
\[
f:N\longrightarrow \mathcal Z_P
\]
induces a homotopy equivalence
\[
\bar f:N/K\longrightarrow \mathcal Z_P/K .
\]
\end{prop}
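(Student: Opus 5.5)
We need a plan for showing that the \(G\)-equivariant homotopy equivalence \(f:N\to\mathcal Z_P\) descends to a homotopy equivalence \(\bar f:N/K\to\mathcal Z_P/K\), and in fact to a \(G/K\)-equivariant one.

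The map itself is formal. Since \(f\) is \(G\)-equivariant, it is in particular \(K\)-equivariant, so \(f(kx)=kf(x)\). Hence \(f\) sends \(K\)-orbits to \(K\)-orbits and induces a continuous map \(\bar f\) on quotients. Because \(K\) is normal in \(G\) (it is the kernel of \(\Lambda\)), the residual group \(G/K\) acts on both quotients and \(\bar f\) is \(G/K\)-equivariant. Next we take a \(G\)-homotopy inverse \(g:\mathcal Z_P\to N\) together with \(G\)-homotopies \(H:g\circ f\simeq \mathrm{id}_N\) and \(H':f\circ g\simeq\mathrm{id}\). Each stage \(H_t\) is \(K\)-equivariant, so \(H\) descends to a homotopy \(\bar H:(N/K)\times I\to N/K\). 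To check continuity, we use that \(N\times I\to (N/K)\times I\) is a quotient map: the orbit map of a compact group action is open, and a product of open surjections is open, hence a quotient map. Thus \(\bar g\circ\bar f\simeq\mathrm{id}\) and \(\bar f\circ\bar g\simeq\mathrm{id}\), and these homotopies are even \(G/K\)-equivariant. So \(\bar f\) is a \(G/K\)-homotopy equivalence, which is the form needed to apply \cite{mp,gp}.

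The only nonformal content is the role of Lemma~\ref{lem:Kfree}. Descent of the map and of the homotopies does not require freeness. Freeness is what makes \(N\to N/K\) a principal \(K\)-bundle, so that \(N/K\) is a reasonable space. We would remark this here: \(N\) is a locally linear \(G\)-manifold on which the compact Lie group \(K\) acts freely, so by the slice theorem (Gleason) \(N\to N/K\) is a principal \(K\)-bundle and \(N/K\) is Hausdorff and compact. This ensures that the descended homotopy equivalence lives in the right category for the next steps.

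The main obstacle is not the homotopy equivalence but the later claim that \(N/K\) is a closed locally linear \(G/K\)-manifold. That needs the linear tubes of \(N\) to pass to the quotient. For the present proposition, the only care required is continuity of the descended homotopies, and the open-map argument above handles it.
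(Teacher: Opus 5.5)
Your argument is correct, but it justifies the key step differently from the paper. The paper descends $f$ using $K$-equivariance and then asserts that $\bar f$ is a homotopy equivalence \emph{because} $f$ is one and $K$ acts freely on both spaces. The natural way to fill this in is to compare the principal $K$-bundles $N\to N/K$ and $\mathcal Z_P\to\mathcal Z_P/K$: the $K$-map $f$ is a homeomorphism on each free orbit, so the five lemma on homotopy groups makes $\bar f$ a weak equivalence, and Whitehead's theorem upgrades this once one knows the quotients have CW homotopy type. You instead push down a $G$-homotopy inverse and the $G$-homotopies themselves, and you get continuity from the fact that $q_N\times\mathrm{id}_I$ is an open surjection. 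Your route is purely formal. It needs neither freeness nor the manifold structure of $N/K$, which the paper only establishes afterwards in Proposition~\ref{prop:quotient_manifold}. It also yields directly that $\bar f$ is a $G/K$-homotopy equivalence. That stronger conclusion is exactly what Corollary~\ref{cor:quotient_satisfies_rigidity_hypotheses} and the appeal to \cite{mp,gp} require, whereas the freeness argument by itself only produces a nonequivariant homotopy equivalence. In exchange, the freeness route would work under the weaker hypothesis that $f$ is merely a $K$-map which is a nonequivariant homotopy equivalence. Your assumption of an equivariant homotopy inverse matches the paper's meaning of $G$-homotopy equivalence, the one already used in Lemma~\ref{lem:Kfree} to get homotopy equivalences on fixed sets, so it is legitimate here.
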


\begin{proof}
Since $f$ is $G$--equivariant, it is in particular $K$--equivariant. Therefore it descends to a continuous map between the quotient spaces. Because $f$ is a homotopy equivalence and $K$ acts freely on both spaces, the induced map $\bar f$ is also a homotopy equivalence.
\end{proof}

The quotient $\mathcal Z_P/K$ is a quasitoric manifold in the complex case and a quoric manifold in the quaternionic case. The induced map $\bar f$ is equivariant with respect to the residual action of $G/K$.

We now verify the hypotheses needed in order to apply the equivariant rigidity results for quasitoric \cite{mp} and quoric manifolds \cite{gp}. Namely, we show that the quotients by the free kernel action are closed locally linear manifolds of the correct dimensions, \(2n\) in the complex case and \(4n\) in the quaternionic case, carrying the residual \(T^n\)- and \(Q^n\)-actions, respectively.

\begin{lemma}\label{lem:dim_match}
Let \(X\) and \(Y\) be closed topological manifolds, and suppose that \(f:X\to Y\) is a homotopy equivalence. Then
\[
    \dim X=\dim Y.
\]
\end{lemma}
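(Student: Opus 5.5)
The plan is to read off the dimension from homology, using Poincaré duality with $\mathbb Z/2$ coefficients. With these coefficients every closed topological manifold is orientable, so no orientation hypothesis is needed. I will also assume $X$ and $Y$ are nonempty, which is implicit in the statement. If they are disconnected, I first reduce to the connected case. A homotopy equivalence induces a bijection $\pi_0(X)\to\pi_0(Y)$ and restricts to homotopy equivalences between corresponding components. Since a closed manifold has finitely many components and each component is a closed manifold, it suffices to prove the claim for connected $X$ and $Y$. This handles each pair of matching components; for a topological manifold of pure dimension the conclusion then follows.

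So let $X$ be a connected closed topological manifold of dimension $d$. I claim that
\[
d=\max\{k : H_k(X;\mathbb Z/2)\neq 0\}.
\]
By Poincaré duality for closed topological manifolds with $\mathbb Z/2$ coefficients, which uses the $\mathbb Z/2$-fundamental class (see, e.g., Hatcher, Thm.~3.26 and 3.30), we have $H_d(X;\mathbb Z/2)\cong H^0(X;\mathbb Z/2)\cong\mathbb Z/2\neq 0$. On the other hand, $H_k(X;\mathbb Z/2)=0$ for $k>d$; this is the standard vanishing result for $d$-manifolds (Hatcher, Prop.~3.29). Hence $d$ is the top nonvanishing degree of $\mathbb Z/2$-homology.

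Since $f$ is a homotopy equivalence, $f_*:H_*(X;\mathbb Z/2)\to H_*(Y;\mathbb Z/2)$ is an isomorphism in every degree. Therefore the top nonvanishing degrees agree, and $\dim X=\dim Y$.

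The only point that needs care is that the manifolds are merely topological rather than smooth. I would therefore rely on the version of duality and top-degree vanishing that holds for topological manifolds, which is exactly what is cited above, rather than on triangulations or smooth structures. I do not expect any real obstacle beyond this.
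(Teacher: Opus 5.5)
Your argument is correct and is essentially the paper's proof: in both, the dimension of a closed manifold is the top degree in which $\mathbb Z/2$-(co)homology is nonzero, and a homotopy equivalence preserves this degree. The paper uses cohomology and argues by contradiction rather than with homology. Your reduction to connected components is harmless but unnecessary, since for a closed $d$-manifold $H_d(X;\mathbb Z/2)$ is a nonzero direct sum over its components.
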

\begin{proof}
We use cohomology with \(\mathbb Z/2\)-coefficients.  If \(X\) is a closed \(d\)-dimensional topological manifold, then
\[
    H^d(X;\mathbb Z/2)\neq 0
\]
and \(H^i(X;\mathbb Z/2)=0\) for all \(i>d\).  Since \(f\) is a homotopy equivalence, it induces an isomorphism on cohomology in all degrees. If \(\dim X\neq \dim Y\), say \(\dim X>\dim Y\), then
\[
    H^{\dim X}(Y;\mathbb Z/2)=0
\]
whereas
\[
    H^{\dim X}(X;\mathbb Z/2)\neq 0,
\]
contradicting the cohomology isomorphism. Hence \(\dim X=\dim Y\).
\end{proof}

In particular, if \(N\) is a closed manifold equipped with a homotopy equivalence \(f:N\longrightarrow \mathcal Z_P\) or \(f:N\longrightarrow \mathcal Z_P^{\mathbb H}\), then \(\dim N=\dim \mathcal Z_P=m+n\) in the complex case, and \(\dim N=\dim \mathcal Z_P^{\mathbb H}=3m+n\) in the quaternionic case, respectively.

\begin{prop}\label{prop:quotient_manifold}
Let \(G=T^m\) in the complex case and \(G=Q^m\) in the quaternionic case.  Let \(N\) be a closed locally linear \(G\)-manifold equipped with a \(G\)-equivariant homotopy equivalence
\[
    f:N\longrightarrow \mathcal Z_P
\]
in the complex case, or
\[
    f:N\longrightarrow \mathcal Z_P^{\mathbb H}
\]
in the quaternionic case. Let \(K\subset G\) be the kernel subgroup associated to the characteristic data. By Lemma~\ref{lem:Kfree}, \(K\) acts freely on \(N\). Then \(N/K\) is a closed topological manifold and the residual \(G/K\)-action on \(N/K\) is locally linear.

Moreover, \(\dim(N/K)=2n\) in the complex case, and
\(\dim(N/K)=4n\) in the quaternionic case.
\end{prop}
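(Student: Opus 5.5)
The plan has three parts: the manifold structure, the dimension count, and local linearity of the residual action.

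\emph{Manifold structure.} Since \(K\) is a compact Lie group acting freely on \(N\) (Lemma~\ref{lem:Kfree}), I will first produce local slices. The \(G\)-action on \(N\) is locally linear, so each \(x\in N\) has a linear tube \(G\times_{G_x}V\hookrightarrow N\) with \(V\) a representation of \(G_x\). Restricting to \(K\), this tube is \(K\)-invariant, and \(K\cap gG_xg^{-1}\) is trivial because the action of \(K\) is free. Since \(G\) is a product of copies of \(S^1\) or \(S^3\), I will split \(G\times_{G_x}V\) as a \(K\)-space into \(K\times S\), where \(S\) is a Euclidean slice transverse to the \(K\)-orbit. This can be done because \(K\) acts freely on the homogeneous part \(G/G_x\), and \(G\to G/G_x\) is a principal bundle. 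It follows that \(N\to N/K\) is a principal \(K\)-bundle (Gleason's slice theorem for free actions of compact Lie groups gives this directly), and \(N/K\) is locally homeomorphic to \(S\). Hence \(N/K\) is a topological manifold. It is compact because \(N\) is, and Hausdorff because \(K\) is compact.

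\emph{Dimension count.} The fibre \(K\) is a closed manifold of dimension \(m-n\), respectively \(3(m-n)\). By Lemma~\ref{lem:dim_match}, \(\dim N=m+n\), respectively \(3m+n\). Local triviality of the bundle gives \(\dim N=\dim(N/K)+\dim K\), which yields \(2n\), respectively \(4n\). As a cross-check, \(\bar f:N/K\to\mathcal Z/K\) is a homotopy equivalence by Proposition~\ref{prop:pushdown}, so Lemma~\ref{lem:dim_match} applied to the quotients gives the same answer.

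\emph{Local linearity of the residual action.} Write \(\bar x=Kx\) for a point of \(N/K\). Its \(G/K\)-isotropy is the image of \(G_xK\) in \(G/K\), which is isomorphic to \(G_x\) because \(G_x\cap K=1\). The open embedding \(G\times_{G_x}V\to N\) descends to an open embedding
\[
(G/K)\times_{G_x}V\cong (G\times_{G_x}V)/K\longrightarrow N/K,
\]
where \(G_x\) is identified with its image in \(G/K\). This embedding is \(G/K\)-equivariant, so it is a linear tube around \(\bar x\).

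The main obstacle I expect is the identification \((G\times_{G_x}V)/K\cong (G/K)\times_{G_x}V\) as an open tube. This is where freeness of \(K\) and \(G_x\cap K=1\) are used: the map \(G\times_{G_x}V\to (G/K)\times_{G_x}V\) is a \(K\)-quotient, with \(K\) acting freely on the first factor. I will check this through the short exact sequence \(1\to K\to G\to G/K\to1\), using that \(G_x\) maps injectively into \(G/K\). In the quaternionic case I also need \(K\) to be normal in \(G\); this holds because \(K=\ker\Lambda\).
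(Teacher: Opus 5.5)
Your proposal is correct and follows the paper's proof essentially step for step: freeness of the compact Lie group \(K\) makes \(N\to N/K\) a principal \(K\)-bundle, Lemma~\ref{lem:dim_match} together with \(\dim K\) gives \(2n\), respectively \(4n\), and linear tubes descend via \((G\times_{G_x}V)/K\cong (G/K)\times_{G_x}V\) using \(G_x\cap K=1\). You also make explicit several points the paper leaves implicit: you get the manifold structure of \(N/K\) from Euclidean slices in the linear tubes rather than from local triviality alone, you identify the residual isotropy as \(KG_x/K\cong G_x\), and you note that \(K=\ker\Lambda\) is normal so that \(G/K\) acts.
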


\begin{proof}
Since \(K\) is a compact Lie group acting freely and locally linearly on the closed manifold \(N\), the quotient map
\[
    N\longrightarrow N/K
\]
is a locally trivial principal \(K\)-bundle. In particular, \(N/K\) is a closed topological manifold and
\[
    \dim(N/K)=\dim N-\dim K.
\]

By Lemma~\ref{lem:dim_match}, in the complex case
\[
    \dim N=\dim\mathcal Z_P=m+n.
\]
Since \(K\cong T^{m-n}\), we obtain
\[
    \dim(N/K)
    =
    (m+n)-(m-n)
    =
    2n.
\]

In the quaternionic case,
\[
    \dim N=\dim\mathcal Z_P^{\mathbb H}=3m+n.
\]
Since \(K\cong Q^{m-n}=(S^3)^{m-n}\), we have
\[
\dim K=3(m-n),
\]
and therefore
\[
\dim(N/K)=(3m+n)-3(m-n)=4n.
\]

It remains to check local linearity of the residual action. Let \([x]\in N/K\), and let \(G_x\) be the isotropy subgroup of \(x\) in \(G\). Since \(K\) acts freely on \(N\), we have
\[
G_x\cap K=\{1\}.
\]
By local linearity of the \(G\)-action on \(N\), there is a \(G\)-invariant neighbourhood of the orbit \(Gx\) of the form
\[
    G\times_{G_x} V,
\]
where \(V\) is a linear \(G_x\)-representation.  Quotienting by the free \(K\)-action gives
\[
(G\times_{G_x}V)/K\cong (G/K)\times_{G_x}V,
\]
where \(G_x\) is identified with its image in \(G/K\). This is a linear tube for the residual \(G/K\)-action near \([x]\). Hence the residual action of \(G/K\) on \(N/K\) is locally linear.
\end{proof}

We now record the consequence needed in order to apply the rigidity theorems. Proposition~\ref{prop:quotient_manifold} verifies the manifold-theoretic hypotheses on the source quotient, while Proposition~\ref{prop:pushdown} gives the descended equivariant homotopy equivalence.

\begin{cor}\label{cor:quotient_satisfies_rigidity_hypotheses}
The quotient \(N/K\) satisfies the manifold-theoretic hypotheses of the corresponding equivariant rigidity theorem. More precisely, \(N/K\) is a closed locally linear \(T^n\)-manifold of dimension \(2n\) in the complex case, and a closed locally linear \(Q^n\)-manifold of dimension \(4n\) in the quaternionic case. Moreover, the descended map \(\bar f\) is an equivariant homotopy equivalence onto the corresponding quotient, \(\mathcal Z_P/K\) or \(\mathcal Z_P^{\mathbb H}/K\), respectively.
\end{cor}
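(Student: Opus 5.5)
The corollary follows by assembling the three preceding results. It needs no new geometric input. First we record the manifold-theoretic part, then the equivariance of the descended map, and finally we check that it is an equivariant homotopy equivalence rather than just a homotopy equivalence.

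\emph{Manifold structure.} By Lemma~\ref{lem:Kfree}, \(K\) acts freely on \(N\). Proposition~\ref{prop:quotient_manifold} then says that \(N/K\) is a closed topological manifold of dimension \(2n\) (complex case) or \(4n\) (quaternionic case). It also says that the residual \(G/K\)-action is locally linear, with linear tubes \((G/K)\times_{G_x}V\). We then identify \(G/K\) with \(T^n\), respectively \(Q^n\), via the characteristic homomorphism \(\Lambda\). In the complex case this uses the short exact sequence \(1\to K_\Lambda\to T^m\to T^n\to 1\). In the quaternionic case it uses the surjection \(\Lambda:Q^m\to Q^n\) of Proposition~\ref{prop:ma_quoric_quotient}, which exists under the globality hypothesis.

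\emph{Equivariance of \(\bar f\).} Since \(K\) is normal in \(G\) (it is a kernel), the map \(f\), being \(G\)-equivariant, satisfies \(\bar f([gx])=[g f(x)]\). Hence \(\bar f\) intertwines the two \(G/K\)-actions. By Proposition~\ref{prop:pushdown}, \(\bar f\) is a homotopy equivalence.

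\emph{Upgrading to an equivariant homotopy equivalence.} This is the main point. Take a \(G\)-homotopy inverse \(g:\mathcal Z\to N\) of \(f\), together with \(G\)-homotopies \(H:gf\simeq \mathrm{id}_N\) and \(H':fg\simeq\mathrm{id}_{\mathcal Z}\). These exist by definition of \(G\)-homotopy equivalence, or by the equivariant Whitehead theorem applied to the \(G\)-CW structures; Corollary~\ref{cor:isotropy_equal} guarantees that the fixed-point maps \(f^H\) are homotopy equivalences for all closed \(H\). All of these maps and homotopies are in particular \(K\)-equivariant, with \(K\) acting trivially on the interval factor. They therefore descend to maps \(\bar g\), \(\bar H\), \(\bar H'\) on the quotients, and these are \(G/K\)-equivariant by the same normality argument. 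This exhibits \(\bar f\) as a \(G/K\)-homotopy equivalence \(N/K\to\mathcal Z/K\).

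The only subtlety I expect is this last step. One must work with \(G\)-homotopy inverses rather than the underlying homotopy inverse, so the argument should explicitly use the equivariant hypothesis on \(f\), not merely Proposition~\ref{prop:pushdown}.
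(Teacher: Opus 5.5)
Your proposal is correct. The manifold-theoretic part is exactly the paper's argument: Lemma~\ref{lem:Kfree} and Proposition~\ref{prop:quotient_manifold}, with $G/K$ identified with $T^n$ or $Q^n$ via $\Lambda$.

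The difference is in the final clause. The paper gives no separate proof; it cites Proposition~\ref{prop:pushdown}, whose argument (``$f$ is a homotopy equivalence and $K$ acts freely on both spaces'') only yields a non-equivariant homotopy equivalence. That can be made precise, e.g., by the five lemma on the homotopy sequences of the two principal $K$-bundles plus Whitehead's theorem. The paper then adds the separate remark that $\bar f$ is $G/K$-equivariant. An equivariant map that is a homotopy equivalence is not yet a $G/K$-homotopy equivalence, and the latter is what Theorems~\ref{thm:top_rigidity_quasitoric} and~\ref{thm:top_rigidity_quoric} require.

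Your route descends a $G$-homotopy inverse and both $G$-homotopies to the quotients. Here $(N\times I)/K\cong (N/K)\times I$ because orbit maps are open, so $q_N\times\mathrm{id}_I$ is an open surjection. This gives the $G/K$-homotopy equivalence directly. It uses only normality of $K$, not freeness, and it fills the gap the paper leaves.

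One correction: drop your alternative justification via the equivariant Whitehead theorem. Corollary~\ref{cor:isotropy_equal} only compares isotropy groups and says nothing about the maps $f^H$. The fact that the $f^H$ are homotopy equivalences comes from Bredon's corollary, which already presupposes a $G$-homotopy inverse. Moreover, no $G$-CW structure on a topological locally linear manifold is established in the paper. The existence of the $G$-homotopy inverse by definition is all you need.
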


We can therefore apply the equivariant rigidity results for quasitoric and quoric manifolds. In the complex case and quasitoric manifolds we have:

\medskip

\begin{thm}[Topological rigidity of quasitoric manifolds \cite{mp}]\label{thm:top_rigidity_quasitoric}
Let $M^{2n}$ be a quasitoric manifold and let $N$ be a locally linear $T^n$--manifold. If
\[
f:N\longrightarrow M^{2n}
\]
is a $T^n$--equivariant homotopy equivalence, then $f$ is equivariantly homotopic to a $T^n$--homeomorphism.
\end{thm}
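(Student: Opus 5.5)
This theorem is quoted from \cite{mp}, so the plan is to reconstruct the Metaftsis--Prassidis argument rather than derive it from earlier results in this paper. The strategy is to reduce equivariant rigidity to an isovariant, stratified problem over the orbit space $P$. Then we solve it face by face, using the fact that every stratum is a product of a torus with an open face, or a free $T^n$-manifold.

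First, as in Proposition~\ref{prop:isotropy_transfer} and Corollary~\ref{cor:isotropy_equal}, the Bredon fixed-point argument shows that for every closed subgroup $H\subseteq T^n$ the restriction $f^H:N^H\to M^H$ is a homotopy equivalence. Together with local linearity, this forces the isotropy groups of $N$ to be exactly the coordinate subtori $T^\lambda_F$. Comparing dimensions of the fixed-point components via Lemma~\ref{lem:dim_match} forces the slice representations to agree with those of $M$. Consequently $N$ is locally standard, and $N/T^n$ is a topological manifold with corners whose face poset matches that of $P$. Next we would promote $f$ to an isovariant homotopy equivalence. In the gap-hypothesis-free torus setting this is done by the stratified approach of Browder--Quinn / Weinberger: the strata $N_{(H)}$ are closed codimension $\ge 2$ submanifolds, and equivariant homotopy equivalences between locally standard manifolds with identical isotropy data can be deformed to isovariant ones.

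Second, we would pass to orbit spaces. The map $f$ induces a stratum-preserving homotopy equivalence $N/T^n\to P$, and each open stratum is the interior of a face $F$. Over the interior of $F$, the manifold $N$ is a principal $T^n/T^\lambda_F$-bundle over a contractible space, hence trivial. On each face, $f$ induces a homotopy equivalence of compact manifolds with corners, each homotopy equivalent to a disk. By the topological Poincaré conjecture and $h$-cobordism theorems, including the low-dimensional cases via Freedman and Perelman, one can inductively on the dimension of faces build a face-preserving homeomorphism $N/T^n\cong P$ homotopic to $\bar f$. The vanishing of the relevant Whitehead groups and structure sets of disks rel boundary makes this work.

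Finally, we would reconstruct equivariantly. Since $N$ is locally standard over $P$ with the same characteristic function $\lambda$, the principal $T^n$-bundle over $P$ in Yoshida's description is trivial, because $P$ is contractible. Hence $N\cong (T^n\times P)/{\sim_\lambda}=M(P,\lambda)\cong M$ equivariantly, via the Davis--Januszkiewicz classification \cite[Proposition~1.8]{dj}. To make this homeomorphism $T^n$-homotopic to $f$, we would compare the two maps on each stratum and use that the space of equivariant self-maps of $T^n\times(\text{contractible})$ covering a map homotopic to the identity deforms to the identity. The main obstacle is the upgrade from homotopy equivalence to isovariant equivalence with matched slice data, and controlling the homotopy class. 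I would try first to do this stratum by stratum, inducting on the codimension of faces, using the equivariant tubular neighbourhoods provided by local linearity.
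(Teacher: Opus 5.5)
The paper does not prove this statement. It quotes it from Metaftsis--Prassidis \cite{mp}, so your sketch has to stand on its own.

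\textbf{What works.} The part of your sketch that produces \emph{some} $T^n$-homeomorphism $N\cong M$ is a sound outline.
\begin{enumerate}
\item[(a)] Comparing components of $N^H\simeq M^H$ forces the isotropy groups to be the $T^\lambda_F$ and the slices to be standard. Here the bijection on $\pi_0$ is compatible with the inclusions $N^{T_F}\subseteq N^H$, and Lemma~\ref{lem:dim_match} applies componentwise.
\item[(b)] The closed faces of $N/T^n$ are homotopy equivalent to the faces of $P$, because $T^n$-homotopies restrict to fixed sets and descend to orbit spaces.
\item[(c)] Induction on dimension, using the Poincar\'e conjecture and the Alexander trick, gives a face-preserving homeomorphism $N/T^n\cong P$.
\item[(d)] The Davis--Januszkiewicz/Yoshida classification then gives $N\cong_{T^n}M$.
\end{enumerate}
None of this needs isovariance: $f(N^{T_F})\subseteq M^{T_F}$ already makes $\bar f$ carry closed faces into closed faces.

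\textbf{The gap.} The statement asserts more: that the given $f$ is $T^n$-homotopic to a homeomorphism. This is exactly what the paper uses later, as $h\simeq\bar f$ in Theorem~\ref{thm:main_rigidity}. Both of your mechanisms for this step fail as stated.

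First, no available theorem deforms $f$ to an isovariant map. Browder's equivariant-to-isovariant result needs a gap hypothesis, which fails here because each fixed stratum has codimension $2$ in the next. Browder--Quinn or Weinberger stratified surgery takes a stratified homotopy equivalence as input; it does not produce one.

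Second, your claim about equivariant self-maps of $T^n\times(\text{contractible})$ only sees the free stratum.
\begin{enumerate}
\item[(i)] A $T^n$-map $g:M\to M$ is the same as a map $\varphi:P\to M$, $\varphi(x)=g([1,x])$, with $\varphi(F)\subseteq M^{T_F}$ for every face $F$. $T^n$-homotopies are homotopies through such maps.
\item[(ii)] $\varphi$ may push interior points into lower strata. There the gauge $\psi$ in $g([t,x])=[\psi(x)t,\bar g(x)]$ is only defined modulo isotropy.
\item[(iii)] Deforming $\varphi$ to the section $x\mapsto[1,x]$ face by face meets, on each $k$-face $F$, a difference class in $\pi_k(M_F)$. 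Already $\pi_2(M_F)\neq 0$ for $2$-faces.
\item[(iv)] These classes must be killed by re-choosing the homotopies on lower faces. This is a Bredon-type obstruction problem over $P$ with coefficients $F\mapsto\pi_k(M_F)$, and your proposal never addresses it.
\end{enumerate}

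Nor can this be sidestepped by choosing the homeomorphism cleverly. Every $T^n$-homeomorphism of $M$ inducing the identity on faces is $T^n$-homotopic to $\mathrm{id}_M$: lift its gauge to $\mathbb R^n$ and scale it to $0$, then straighten the base map by a straight-line homotopy on the convex faces. So the statement is equivalent to every face-identity $T^n$-self-equivalence of $M$ being $T^n$-homotopic to $\mathrm{id}_M$.

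As written, your argument proves only rigidity up to homeomorphism in the sense of Definition~\ref{def:eq_rigidity_weak}, not the homotopy-to-homeomorphism form stated here.
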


\begin{cor}\label{cor:quotient_quasitoric}
The assumptions of Theorem \ref{thm:top_rigidity_quasitoric} imply that the manifold $N$ is a quasitoric manifold.
\end{cor}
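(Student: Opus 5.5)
Under the assumptions of Theorem~\ref{thm:top_rigidity_quasitoric}, that theorem gives a $T^n$--homeomorphism $h:N\to M^{2n}$ equivariantly homotopic to $f$. The plan is therefore to transport the quasitoric structure of $M^{2n}$ to $N$ along $h$. We must check three things. First, $N$ is a closed $2n$-manifold. Second, the $T^n$--action on $N$ is locally standard. Third, the orbit space $N/T^n$ is a simple polytope.

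For the first point, $N$ is homeomorphic to $M^{2n}$. Hence $N$ is closed, and $\dim N=2n$, which also follows from Lemma~\ref{lem:dim_match}.

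For local standardness, take $y\in N$. Put $x=h(y)$ and choose a $T^n$--invariant neighbourhood $U$ of $x$ in $M^{2n}$. Let $\varphi:U\to V\subset\mathbb C^n$ be a weakly equivariant homeomorphism, twisted by an automorphism $\theta$, onto an invariant open subset of the standard representation. Then $h^{-1}(U)$ is a $T^n$--invariant neighbourhood of $y$, because $h$ is equivariant. Moreover $\varphi\circ h|_{h^{-1}(U)}$ is a homeomorphism onto $V$ satisfying
\[
\varphi(h(t\cdot y'))=\varphi(t\cdot h(y'))=\theta(t)\cdot\varphi(h(y')).
\]
This is exactly a weakly equivariant chart, so the action on $N$ is locally standard.

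For the orbit space, an equivariant homeomorphism induces a homeomorphism of orbit spaces $\bar h:N/T^n\to M^{2n}/T^n\cong P$. It also preserves isotropy subgroups, so it carries the orbit-type stratification of $N/T^n$ to the face stratification of $P$. Composing the orbit map $N\to N/T^n$ with $\bar h$ gives a projection $N\to P$ whose fibres are exactly the orbits. This identifies $N/T^n$ with the simple polytope $P$ as a manifold with corners. The characteristic function of $N$ is then that of $M^{2n}$, since by Corollary~\ref{cor:isotropy_equal} the isotropy data agree.

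The main subtlety is not any step of this transport argument, which is formal once the equivariant homeomorphism exists. It is the appeal to Theorem~\ref{thm:top_rigidity_quasitoric} itself. That theorem needs $N$ to be closed and locally linear, and this is the standing hypothesis in our situation (cf.\ Corollary~\ref{cor:quotient_satisfies_rigidity_hypotheses}). Without closedness one could not even say that $N$ has the correct dimension. With that in place, the corollary follows directly, and in our application it shows that $N/K$ is itself quasitoric over $P$.
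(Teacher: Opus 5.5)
Your argument is correct and is essentially the paper's own. The paper gives no separate proof and treats the corollary as immediate from Theorem~\ref{thm:top_rigidity_quasitoric}: the $T^n$-homeomorphism $h:N\to M^{2n}$ transports local standardness and the orbit-space identification with $P$, exactly as you spell out. One small point: Corollary~\ref{cor:isotropy_equal} compares $N$ with $\mathcal Z_P$ under $T^m$, not with $M^{2n}$ under $T^n$, so it is the wrong citation for the characteristic function; it is also unnecessary, since $h$ itself gives $T^n_y=T^n_{h(y)}$.
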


\medskip

In the quaternionic case and quoric manifolds we have the following results: 

\medskip

\begin{thm}[Topological rigidity of quoric manifolds \cite{gp}]\label{thm:top_rigidity_quoric}
Let $M^{4n}$ be a quoric manifold and let $N$ be a locally linear $Q^n$--manifold. If
\[
f:N\longrightarrow M^{4n}
\]
is a $Q^n$--equivariant homotopy equivalence, then $f$ is equivariantly homotopic to a $Q^n$--homeomorphism.
\end{thm}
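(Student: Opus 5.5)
The plan is to follow the strategy of \cite{mp} for quasitoric manifolds, adapted to the non-abelian group \(Q^n\). Throughout, let \(\pi:M^{4n}\to P^n\) be the orbit map and write \(M=\mathcal M(P,\lambda)\) for the canonical model, using \cite[Proposition~4.2.10]{ho}. The proof reduces to three points: \(N\) is again quoric over a polytope-like orbit space; that orbit space is face-preservingly homeomorphic to \(P\); and the canonical model then rebuilds the \(Q^n\)-homeomorphism.

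\textbf{Step 1: isotropy and local structure of \(N\).} I would argue exactly as in Proposition~\ref{prop:isotropy_transfer} and Corollary~\ref{cor:isotropy_equal}, now for \(f:N\to M^{4n}\).
\begin{itemize}
\item For every closed subgroup \(H\subseteq Q^n\), \cite[Cor.~II.5.5]{bredon} shows that \(f^H:N^H\to M^H\) is a homotopy equivalence.
\item Hence \(N\) has exactly the isotropy subgroups \(\widehat\lambda(F)\) of \(M\), and \(f\) restricts to homotopy equivalences between corresponding fixed-point strata.
\end{itemize}
Next I use local linearity. At a point \(x\) with \(G_x=\widehat\lambda(F)\), \(F\) of codimension \(k\), the slice is a linear representation \(V\) of a group isomorphic to \(Q^k\). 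A dimension count uses Lemma~\ref{lem:dim_match} applied to \(N^{H}\simeq M^{H}\) for all \(H\). Together with the fact that \(V^{H'}\) must have the same dimension as in the model for every subgroup \(H'\), this forces \(V\) to be the regular corner representation. Therefore the action on \(N\) is locally regular with the same isotropy functor.

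\textbf{Step 2: the orbit space.} Set \(B=N/Q^n\). By Step 1, \(B\) is a manifold with corners whose faces are the images of the strata \(N^{H}\). The map \(f\) induces a face-preserving map \(\bar f:B\to P\). On each open stratum, \(\bar f\) is covered by a homotopy equivalence of free \(Q^n/H\)-spaces, so it is itself a homotopy equivalence. By induction on the dimension of faces, each face of \(B\) is then a compact contractible manifold with corners whose boundary is glued from lower faces already identified with the corresponding faces of \(P\).

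I expect this to be the main obstacle. The difficulty is upgrading these homotopy balls, face by face, to face-preserving homeomorphisms with \(P\) that are homotopic to \(\bar f\) through face-preserving maps. The boundary of each face is a homotopy sphere, homeomorphic to a sphere by the topological Poincar\'e conjecture (Freedman, Perelman). A contractible manifold bounded by a sphere is a ball, by the topological \(h\)-cobordism theorem, or by Perelman in dimension \(4\). I would then extend homeomorphisms of the boundary over the face by the Alexander trick, which also supplies the homotopy to \(\bar f\).

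\textbf{Step 3: reconstruction.} Since \(B\cong P\) is contractible, the principal bundle over the free part is trivial and admits a section. By \cite[Proposition~4.2.10]{ho}, \(N\) is therefore \(Q^n\)-homeomorphic to \((Q^n\times P)/\sim_\lambda\), with the same \(\lambda\) as \(M\) because the isotropy data agree. This gives a \(Q^n\)-homeomorphism \(h:N\to M\) covering the homeomorphism from Step 2.

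It remains to show \(h\simeq_{Q^n} f\). I would compare both maps on the canonical model. Equivariant maps covering a fixed map \(P\to P\) correspond to choices of \(Q^n\)-valued gauge functions on \(P\) that are compatible with \(\sim_\lambda\) over each face. Since \(P\) is contractible and \(Q^n\) is connected, these gauge data can be deformed face by face. Combined with the homotopy from Step 2, this yields an equivariant homotopy from \(f\) to \(h\).
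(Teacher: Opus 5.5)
The paper gives no proof of this statement. It is imported from \cite{gp} and used as a black box, so there is nothing internal to compare with, and your proposal has to stand on its own.

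Steps~1--2 and the first half of Step~3 are a plausible outline for producing \emph{some} $Q^n$-homeomorphism $N\cong M$, but they still need filling in:
\begin{itemize}
\item The fixed-point argument only shows that each $G_x$ is \emph{contained} in a model isotropy group; it does not give equality.
\item The claim that a dimension count forces the slice to be a regular corner is asserted, not shown.
\item Recognising a contractible $4$-dimensional face bounded by $S^3$ as a ball needs Freedman, not Perelman; Perelman is what handles the $3$-dimensional faces.
\end{itemize}

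The genuine gap is the last paragraph of Step~3, and it is exactly the part the paper relies on: the proof of Theorem~\ref{thm:main_rigidity} uses that $h$ is equivariantly homotopic to $\bar f$.

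\textbf{The gauge-data claim is false.} You assert that gauge data over a fixed map $P\to P$ can be deformed face by face ``since $P$ is contractible and $Q^n$ is connected.'' For maps covering $\mathrm{id}_P$, over the relative interior of a face with isotropy $H$ the gauge must lie in $N_{Q^n}(H)$ modulo $H$. For the non-normal isotropy groups that occur in regular quaternionic actions, $N_{Q^n}(H)/H$ can be disconnected.

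Here is an example. Let $Q^2$ act on $M=\mathbb HP^1\times\mathbb HP^1$ by $(s_1,s_2)\cdot([a:b],[c:d])=([s_1a:b],[s_1c:s_2d])$.
\begin{itemize}
\item In the affine charts this is the regular corner $(h_1,h_2)\mapsto(s_1h_1,s_1h_2s_2^{-1})$ and its analogues, so $M$ is quoric over the square.
\item Over the facet $\{a=0\}$ the isotropy is conjugate to the diagonal $\Delta=\{(s,s)\}$, and $N_{Q^2}(\Delta)=\Delta\sqcup(1,-1)\Delta$.
\item The central element $z=(1,-1)$ acts by a $Q^2$-homeomorphism $\varphi_z$ covering $\mathrm{id}_P$.
\item On the component $\{[0:1]\}\times\mathbb RP^1$ of $M^{\Delta}$, $\varphi_z$ is the degree $-1$ reflection $[1:u]\mapsto[1:-u]$ of a circle.
\item A $Q^2$-homotopy restricts to a component-preserving homotopy of $M^{\Delta}$, so $\varphi_z$ is not $Q^2$-homotopic to $\mathrm{id}_M$.
\end{itemize}
Now take $N=M$ and $f=\varphi_z$. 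Nothing in your canonical-model construction prevents $h=\mathrm{id}_M$, and no deformation of gauge data joins $h$ to $f$. You would need a separate argument that the equivariant homotopy class of $f$ contains a homeomorphism, and then correct $h$ accordingly. That class is detected, for instance, by degrees on fixed-point strata, i.e.\ by $\pi_0(N_{Q^n}(H)/H)$-data.

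\textbf{The lifting step is also unjustified.} ``Combined with the homotopy from Step~2'' hides a second problem. $\bar f$ is only face-preserving, not orbit-type preserving: equivariance gives only $G_x\subseteq G_{f(x)}$, so free orbits may be sent to singular ones. The straight-line homotopy from $\bar f$ to $\bar h$ in $P$ is therefore not stationary in orbit type, and Palais' covering homotopy theorem does not lift it. Deforming $f$ equivariantly to a map covering $\bar h$ is a substantive step, not a formality.
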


\begin{cor}\label{cor:quotient_quoric}
The assumptions of Theorem \ref{thm:top_rigidity_quoric} imply that the manifold $N$ is a quoric manifold.
\end{cor}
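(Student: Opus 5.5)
The corollary concerns the setting of Theorem~\ref{thm:top_rigidity_quoric}. There \(M^{4n}\) is quoric over \(P^n\), \(N\) is a (closed) locally linear \(Q^n\)-manifold, and \(f:N\to M^{4n}\) is a \(Q^n\)-equivariant homotopy equivalence. The claim is that \(N\) itself is quoric. The plan is to transport the quoric structure of \(M^{4n}\) to \(N\) along a \(Q^n\)-homeomorphism. Quoric structure is defined by local models and an orbit projection, so it is invariant under equivariant homeomorphism.

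First I invoke Theorem~\ref{thm:top_rigidity_quoric}. It says \(f\) is \(Q^n\)-equivariantly homotopic to a \(Q^n\)-homeomorphism \(h:N\to M^{4n}\); only the existence of \(h\) is needed. By Lemma~\ref{lem:dim_match}, or directly because \(h\) is a homeomorphism, \(\dim N=4n\).

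Next I check the two conditions of Definition~\ref{def:quoric}.

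\emph{Local regularity.} Take \(x\in N\). Since \(M^{4n}\) is locally regular, \(h(x)\) has a \(Q^n\)-invariant open neighbourhood \(U\) together with an equivariant homeomorphism \(\varphi:U\to V\) onto an invariant open subset \(V\) of a regular \(Q^n\)-corner. Then \(h^{-1}(U)\) is an invariant open neighbourhood of \(x\), and \(\varphi\circ h|_{h^{-1}(U)}\) is an equivariant homeomorphism onto \(V\), because a composite of equivariant homeomorphisms is equivariant.

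\emph{Orbit projection.} Let \(\pi:M^{4n}\to P^n\) be the given projection whose fibres are the \(Q^n\)-orbits. Set \(\pi_N=\pi\circ h\). This map is continuous. Its fibres are \(h^{-1}\) of the orbits of \(M^{4n}\), and these are exactly the orbits of \(N\), since \(h\) is an equivariant bijection and so \(h(Q^n\cdot x)=Q^n\cdot h(x)\).

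Hence \(N\) is a quoric manifold over \(P^n\). Its characteristic function is that of \(M^{4n}\), because \(h\) preserves isotropy subgroups.

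I see no real obstacle here; all the substance is in Theorem~\ref{thm:top_rigidity_quoric}. The one point to watch is that quoric structure is defined up to equivariant homeomorphism and not weak equivariance. This causes no problem, since \(h\) is genuinely \(Q^n\)-equivariant.
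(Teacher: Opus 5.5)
Your proposal is correct and takes the same approach as the paper. The paper states this corollary without proof, as an immediate consequence of Theorem~\ref{thm:top_rigidity_quoric}: the resulting \(Q^n\)-homeomorphism \(h:N\to M^{4n}\) transports the locally regular charts and the orbit projection \(\pi\circ h\) to \(N\), and you have simply written out this routine check of Definition~\ref{def:quoric}.
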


\medskip

Summarizing all the above, we obtain the following immediate result.

\begin{cor}\label{cor:quotient_homeo}
The quotient manifolds $N/K$ and $\mathcal Z_P/K$ are equivariantly homeomorphic. 
\end{cor}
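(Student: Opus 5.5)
The plan is to combine the reduction of Section~\ref{sec:reduction} with the quotient rigidity theorems. Start from a closed locally linear \(G\)-manifold \(N\) and a \(G\)-equivariant homotopy equivalence \(f:N\to\mathcal Z_P\) (or \(\mathcal Z_P^{\mathbb H}\)). Lemma~\ref{lem:Kfree} shows that the kernel subgroup \(K\) acts freely on \(N\). Proposition~\ref{prop:pushdown} then gives the descended map \(\bar f:N/K\to\mathcal Z_P/K\).

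The first thing to check is that \(\bar f\) is a \(G/K\)-equivariant homotopy equivalence, not merely a nonequivariant one. Let \(g:\mathcal Z_P\to N\) be a \(G\)-homotopy inverse of \(f\), with \(G\)-homotopies \(gf\simeq \mathrm{id}\) and \(fg\simeq\mathrm{id}\). Each of these maps and homotopies is in particular \(K\)-equivariant, so it descends to the \(K\)-orbit spaces. The descended maps are \(G/K\)-equivariant, and the descended homotopies are \(G/K\)-homotopies. Hence \(\bar g\) is a \(G/K\)-homotopy inverse of \(\bar f\).

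By Proposition~\ref{prop:quotient_manifold} and Corollary~\ref{cor:quotient_satisfies_rigidity_hypotheses}, \(N/K\) is a closed locally linear \(T^n\)-manifold of dimension \(2n\) (respectively a closed locally linear \(Q^n\)-manifold of dimension \(4n\)). On the target side, \(\mathcal Z_P/K\) is the quasitoric manifold \(M(P,\Lambda)\) of Section~\ref{sec:ma_principal_bundle}. In the quaternionic case, global characteristic data and Proposition~\ref{prop:ma_quoric_quotient} make \(\mathcal Z_P^{\mathbb H}/K\) a quoric manifold. We identify \(G/K\) with \(T^n\) (resp.\ \(Q^n\)) through the surjection \(\Lambda\). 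Theorem~\ref{thm:top_rigidity_quasitoric} (resp.\ Theorem~\ref{thm:top_rigidity_quoric}) applied to \(\bar f\) then produces a \(G/K\)-homeomorphism \(h:N/K\to\mathcal Z_P/K\) that is \(G/K\)-homotopic to \(\bar f\). This proves the corollary, and we also record \(h\simeq_{G/K}\bar f\) for use in Section~\ref{sec:classification}.

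I expect the main obstacle to be matching the hypotheses of the cited rigidity theorems exactly.

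\begin{enumerate}
\item[(a)] The residual actions must coincide with the actions used in \cite{mp,gp}. In the quaternionic case the isomorphism \(Q^m/K_\Lambda\cong Q^n\) induced by \(\Lambda\) is essential here; the identification of \(Q^m/K\) with \(Q^n\) is not automatic, since \(Q^m\) is non-abelian.
\item[(b)] The cited theorems must allow locally linear sources rather than only locally standard or locally regular ones. Theorems~\ref{thm:top_rigidity_quasitoric} and~\ref{thm:top_rigidity_quoric} are stated in exactly this form, so this point only needs to be verified against the references.
\end{enumerate}

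If (a) turns out to be delicate, I would first try to handle it by transporting the canonical-model description \((Q^n\times P)/{\sim_\lambda}\) along \(\Lambda\). The goal is to check that the isotropy subgroups of \(Q^m/K\) acting on \(\mathcal Z^{\mathbb H}_P/K\) are the images under \(\Lambda\) of the coordinate subtori \(Q^{I_x}\), which are precisely the representatives \(\widehat\lambda(F(x))\).
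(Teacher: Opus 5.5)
Your proposal is correct and follows the paper's own route. The paper compresses it into ``summarizing all the above'': freeness of the $K$-action on $N$ (Lemma~\ref{lem:Kfree}), descent of $f$ to $\bar f$, the manifold-theoretic checks of Proposition~\ref{prop:quotient_manifold}, and then Theorem~\ref{thm:top_rigidity_quasitoric} or Theorem~\ref{thm:top_rigidity_quoric} applied to $\bar f$ after identifying $G/K$ with $T^n$ or $Q^n$ via $\Lambda$. Your explicit descent of a $G$-homotopy inverse and of the $G$-homotopies to $G/K$-equivariant data is a useful addition. The paper's proof of Proposition~\ref{prop:pushdown} only justifies a nonequivariant homotopy equivalence, whereas the rigidity theorems need $\bar f$ to be a $G/K$-homotopy equivalence.
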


\medskip

The classification problem for $N$ therefore reduces to understanding the possible principal $K$--bundles over the quasitoric or quoric manifold $\mathcal Z_P/K$.

This reduction will be used in the following section to complete the classification of moment--angle manifolds.

\section{Topological classification of moment--angle manifolds}
\label{sec:classification}

Throughout this section \(G\) denotes \(T^m\) in the complex case and \(Q^m\) in the quaternionic case.  We write
\[
    \mathcal Z=
    \begin{cases}
    \mathcal Z_P, & \text{in the complex case},\\
    \mathcal Z_P^{\mathbb H}, & \text{in the quaternionic case}.
    \end{cases}
\]
Let \(K\subset G\) be the kernel subgroup associated to the chosen characteristic data, as in Section~\ref{sec:reduction}. We write
\[
    M=\mathcal Z/K
\]
for the corresponding quasitoric, respectively quoric, quotient, and
\[
    q:\mathcal Z\longrightarrow M
\]
for the quotient map.

The aim of this section is to prove that \(\mathcal Z\) is \(G\)-rigid up to homeomorphism in the sense of Definition~\ref{def:eq_rigidity_weak}. The proof uses the reduction results of Section~\ref{sec:reduction} together with a pullback argument for the associated principal \(K\)-bundles.  We emphasize that this is a homeomorphism-type rigidity statement: we do not claim that the original equivariant homotopy equivalence is itself equivariantly homotopic to a homeomorphism.

\subsection{The pullback construction}

We shall use the following standard fact about principal bundles over a fixed base.

\begin{lemma}\label{lem:bundle_map}
Let \(K\) be a compact Lie group, and let
\[
    p:E\to B,
    \qquad
    p':E'\to B
\]
be principal \(K\)-bundles over the same base.  If
\[
    u:E\longrightarrow E'
\]
is a \(K\)-equivariant continuous map covering \(\mathrm{id}_B\), then \(u\) is an isomorphism of principal \(K\)-bundles.
\end{lemma}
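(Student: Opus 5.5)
The plan is to show directly that \(u\) is a bijection and that its inverse is continuous; continuity of the inverse is the only nontrivial point. First, \(u\) is injective and surjective fibrewise. Fix \(b\in B\) and \(e\in p^{-1}(b)\). Since \(u\) covers \(\mathrm{id}_B\), we have \(u(e)\in p'^{-1}(b)\). The fibre \(p^{-1}(b)\) is the free transitive orbit \(eK\), and by equivariance \(u(ek)=u(e)k\). Hence \(u\) maps \(eK\) onto the orbit \(u(e)K\), which is all of \(p'^{-1}(b)\) because \(K\) acts transitively on fibres of \(E'\). Freeness of the \(K\)-action on \(E'\) shows that \(u(e)k=u(e)k'\) forces \(k=k'\), so \(u\) is injective on each fibre. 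Since it covers the identity, \(u\) is a global bijection.

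Second, I will prove continuity of \(u^{-1}\) by working locally over \(B\). Choose an open cover \(\{U_\alpha\}\) of \(B\) over which both bundles are trivial, so that
\[
p^{-1}(U_\alpha)\cong U_\alpha\times K,\qquad p'^{-1}(U_\alpha)\cong U_\alpha\times K,
\]
with \(K\) acting by right multiplication on the second factor. In these trivializations \(u\) takes the form \((b,k)\mapsto (b,\phi_\alpha(b,k))\). Equivariance gives \(\phi_\alpha(b,k)=\phi_\alpha(b,1)\,k\). Put \(g_\alpha(b):=\phi_\alpha(b,1)\). Then \(g_\alpha:U_\alpha\to K\) is continuous, being a composite of \(u\) with a section and a projection. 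The local inverse is \((b,k)\mapsto (b,g_\alpha(b)^{-1}k)\), which is continuous because inversion and multiplication in the topological group \(K\) are continuous. These local inverses agree with the global set-theoretic inverse \(u^{-1}\) on each \(p'^{-1}(U_\alpha)\), so they glue to a continuous map. Therefore \(u\) is a \(K\)-equivariant homeomorphism covering \(\mathrm{id}_B\), that is, an isomorphism of principal \(K\)-bundles.

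The main point needing care is that local triviality must hold for both \(p\) and \(p'\) simultaneously. Taking a common refinement of the two trivializing covers handles this. Compactness of \(K\) is not actually needed beyond its being a topological group acting freely with locally trivial quotient. Alternatively, if \(B\) were compact Hausdorff one could invoke the closed-map argument, but the local computation above avoids any such hypothesis.
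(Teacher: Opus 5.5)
Your proposal is correct and follows the same route as the paper. You get fibrewise bijectivity from the free transitive action on fibres. Then, in local trivializations, \(u\) is multiplication by a continuous \(K\)-valued function \(g_\alpha\), which gives the continuous local inverse \((b,k)\mapsto (b,g_\alpha(b)^{-1}k)\). You simply write out the explicit local inverse and the common-refinement step that the paper's short argument leaves implicit.
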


\begin{proof}
This is a standard fact about equivalence of principal bundles over a fixed base; see Steenrod \cite[Ch.~I, \S2]{Steenrod}. For completeness, we recall the argument.  For each \(b\in B\), the restriction
\[
    u_b:E_b\longrightarrow E'_b
\]
is a \(K\)-equivariant map between two free transitive \(K\)-spaces. Hence \(u_b\) is a bijection. In local trivializations, \(u\) is given by multiplication by a continuous \(K\)-valued function, and is therefore a local homeomorphism. Thus \(u\) is a homeomorphism and hence an isomorphism of principal \(K\)-bundles.
\end{proof}

\begin{prop}[Pullback identification]
\label{prop:pullback_construction}
Let \(N\) be a closed locally linear \(G\)-manifold, and let
\[
    f:N\longrightarrow \mathcal Z
\]
be a \(G\)-equivariant homotopy equivalence. Let
\[
    \bar N=N/K,
\]
let
\[
    \bar f:\bar N\longrightarrow M
\]
be the induced map of Proposition~\ref{prop:pushdown}, and let
\[
    \bar f^*\mathcal Z
    =
    \{(x,z)\in \bar N\times \mathcal Z \mid \bar f(x)=q(z)\}
\]
be the pullback of \(q:\mathcal Z\to M\) along \(\bar f\).  Then the map
\[
    F:N\longrightarrow \bar f^*\mathcal Z,
    \qquad
    F(z)=\bigl(q_N(z),f(z)\bigr),
\]
where \(q_N:N\to \bar N\) is the quotient map, is a \(G\)-equivariant homeomorphism.
\end{prop}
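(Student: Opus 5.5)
The plan is to reduce the statement to Lemma~\ref{lem:bundle_map}. The idea is to view both \(q_N:N\to\bar N\) and the projection \(\mathrm{pr}_1:\bar f^*\mathcal Z\to\bar N\) as principal \(K\)-bundles over the common base \(\bar N\), and to show that \(F\) is a \(K\)-equivariant map covering \(\mathrm{id}_{\bar N}\). The proof then has four steps.

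\emph{Step 1: \(F\) is well defined and continuous.} For \(z\in N\) we have \(q(f(z))=\bar f(q_N(z))\), because \(\bar f\) is by construction (Proposition~\ref{prop:pushdown}) the map induced by \(f\) on \(K\)-orbits. Hence \(F(z)\in\bar f^*\mathcal Z\). Continuity is clear, since both components of \(F\) are continuous.

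\emph{Step 2: the bundle structures.} By Lemma~\ref{lem:Kfree} and Proposition~\ref{prop:quotient_manifold}, \(q_N\) is a locally trivial principal \(K\)-bundle. The map \(q\) is a principal \(K\)-bundle by Section~\ref{sec:moment_angle}; in the quaternionic case this is Proposition~\ref{prop:ma_quoric_quotient}. Its pullback along \(\bar f\) is therefore a principal \(K\)-bundle over \(\bar N\), with \(K\) acting on the second factor. Clearly \(\mathrm{pr}_1\circ F=q_N\), and \(F(kz)=(q_N(z),kf(z))=k\cdot F(z)\). Lemma~\ref{lem:bundle_map} then shows that \(F\) is a homeomorphism.

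\emph{Step 3: \(G\)-equivariance.} This is the step needing care, because \(G\) need not be abelian in the quaternionic case. The correct \(G\)-action on \(\bar f^*\mathcal Z\) is
\[
g\cdot(x,z)=(\bar g x,\,gz),
\]
where \(\bar g\) is the image of \(g\) in \(G/K\). Since \(K\) is normal, \(G\) acts on \(\bar N\) and on \(M\) through \(G/K\). Because \(\bar f\) and \(q\) are \(G/K\)-equivariant, this formula preserves the subspace \(\bar f^*\mathcal Z\). In the complex case \(K\) is a subtorus of an abelian group, so normality is automatic. In the quaternionic case \(K=\ker\Lambda\) is the kernel of the homomorphism \(\Lambda:Q^m\to Q^n\) of Proposition~\ref{prop:ma_quoric_quotient}, so it is normal. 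Equivariance of \(F\) then follows from \(q_N(gz)=\bar g\,q_N(z)\) and \(f(gz)=gf(z)\).

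\emph{Step 4: conclusion.} Since \(F\) is a continuous bijective \(G\)-map with continuous inverse by Step~2, it is a \(G\)-equivariant homeomorphism.

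The main obstacle I expect is verifying the hypotheses of Lemma~\ref{lem:bundle_map}: that the pullback is locally trivial and that \(F\) restricts to \(K\)-equivariant maps on each fibre. Both points are standard once \(q_N\) is known to be locally trivial, which is Proposition~\ref{prop:quotient_manifold}; that proposition relies on Gleason's slice theorem for free actions of compact Lie groups.
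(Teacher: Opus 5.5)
Your proposal is correct and follows the paper's proof essentially step for step. Well-definedness comes from \(q\circ f=\bar f\circ q_N\); \(F\) is \(K\)-equivariant and covers \(\mathrm{id}_{\bar N}\), so Lemma~\ref{lem:bundle_map} makes it a homeomorphism; and \(F\) is \(G\)-equivariant for the action \(g\cdot(x,z)=(\bar g x,gz)\) on the pullback, with your check that \(K\) is normal in \(G\) (so that \(G\) acts on \(\bar N\), \(M\) and \(\bar f^*\mathcal Z\)) making explicit a point the paper leaves implicit.
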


\begin{proof}
The maps involved fit into the following commutative diagram:

\[
\begin{tikzcd}[column sep=large,row sep=large]
N
    \arrow[r,"F"]
    \arrow[d,"q_N"']
    \arrow[rr,bend left=20,"f"]
&
\bar f^{\,*}\mathcal Z
    \arrow[d,"\mathrm{pr}_1"]
    \arrow[r,"\mathrm{pr}_2"]
&
\mathcal Z
    \arrow[d,"q"]
\\
\bar N
    \arrow[r,equal]
&
\bar N
    \arrow[r,"\bar f"']
&
M .
\end{tikzcd}
\]

Here \(F(z)=\bigl(q_N(z),f(z)\bigr)\). The map \(F\) is well defined because \(q(f(z))=\bar f(q_N(z))\). Moreover, \(\mathrm{pr}_1\circ F=q_N\), \(\mathrm{pr}_2\circ F=f\). The \(G\)-action on the pullback is given by
\[
    g\cdot(x,z)=(\bar g x,gz),
\]
where \(\bar g\) denotes the image of \(g\) in \(G/K\). Hence
\[
    F(gz)
    =
    \bigl(q_N(gz),f(gz)\bigr)
    =
    \bigl(\bar g\,q_N(z),g f(z)\bigr)
    =
    g\cdot F(z).
\]
Thus \(F\) is \(G\)-equivariant.

Both \(q_N:N\to \bar N\) and \(\bar f^*\mathcal Z\to \bar N\) are principal \(K\)-bundles. The second is the pullback of the principal \(K\)-bundle \(q:\mathcal Z\to M\). The map \(F\) covers \(\mathrm{id}_{\bar N}\), and it is \(K\)-equivariant because \(f\) is \(G\)-equivariant. Hence Lemma~\ref{lem:bundle_map} applies, and \(F\) is an isomorphism of principal \(K\)-bundles. In particular, \(F\) is a \(G\)-equivariant homeomorphism.
\end{proof}

\subsection{Equivariant homotopy invariance of pullbacks}

All principal bundles appearing below are locally trivial bundles over closed manifolds, hence over paracompact spaces. Therefore they are numerable. We may consequently apply the homotopy invariance theorem for numerable principal bundles (in the nonequivariant setting; see \cite{husemoller} and for the equivariant form; see \cite{lashofmay}.).

\begin{lemma}[Equivariant pullback lemma]
\label{lem:equiv_pullback}
Let
\(q:\mathcal Z\to M\) be the \(G\)-equivariant principal \(K\)-bundle above. Suppose that \(u_0,u_1:\bar N\to M\) are \(G/K\)-equivariantly homotopic. Then the pullback principal \(K\)-bundles \(u_0^*\mathcal Z\) and \(u_1^*\mathcal Z\) are \(G\)-equivariantly isomorphic.
\end{lemma}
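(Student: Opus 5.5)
The plan is to reduce the statement to the standard homotopy invariance of pullbacks, applied to a single bundle over \(\bar N\times I\). Let \(H:\bar N\times I\to M\) be a \(G/K\)-equivariant homotopy with \(H_0=u_0\) and \(H_1=u_1\). Let \(G\) act on \(\bar N\times I\) through \(G\to G/K\), trivially on the \(I\) factor. Form the pullback
\[
    H^*\mathcal Z=\{(x,t,z)\in \bar N\times I\times\mathcal Z \mid H(x,t)=q(z)\},
\]
with \(G\) acting by \(g\cdot(x,t,z)=(\bar g x,t,gz)\). Since \(q\) is a \(G\)-map and \(K\subset G\) acts freely along the fibres, \(H^*\mathcal Z\to\bar N\times I\) is a \(G\)-equivariant principal \(K\)-bundle. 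This is exactly the kind of object treated by Lashof--May, namely a principal \((\Pi;K,G)\)-bundle with \(\Pi=G/K\). Its restrictions to \(\bar N\times\{0\}\) and \(\bar N\times\{1\}\) are canonically \(u_0^*\mathcal Z\) and \(u_1^*\mathcal Z\).

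The key step is to show that a \(G\)-equivariant principal \(K\)-bundle \(E\to B\times I\), with \(B\) a paracompact \(G/K\)-space, is \(G\)-isomorphic to \((E|_{B\times 0})\times I\). For this I would invoke the equivariant covering homotopy theorem in \cite{lashofmay}, which holds for numerable bundles. Numerability here is automatic. The base \(\bar N\times I\) is compact Hausdorff. Local triviality in the equivariant sense comes from the local linearity established in Proposition~\ref{prop:quotient_manifold}. Equivariant invariant partitions of unity exist because \(G/K\) is compact, so one can average an ordinary partition of unity over \(G/K\) with respect to Haar measure.

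If one wishes to avoid citing the full Lashof--May machinery, a fallback is available. Since \(K\) is normal in \(G\), one can instead work with the \(G\)-action on the total space and use a \(G\)-invariant connection-type argument. Choose a \(G\)-equivariant retraction-lifting \(\Phi:E\to E|_{B\times 0}\) covering the projection \(B\times I\to B\times 0\). Such a \(\Phi\) is built from the classical Dold/Husemoller construction applied with a \(G\)-invariant numeration, and then averaged over \(G\). Averaging is only meaningful in a convex setting, so the cleaner route is indeed to cite the equivariant theorem.

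Finally, restrict the resulting \(G\)-isomorphism \(E\cong E|_{B\times 0}\times I\) to \(t=1\). This gives \(u_1^*\mathcal Z\cong_G u_0^*\mathcal Z\), and the isomorphism covers the identity of \(\bar N\). By Lemma~\ref{lem:bundle_map}, it suffices to produce a \(G\)-equivariant map covering the identity, since such a map is automatically a homeomorphism.

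I expect the main obstacle to be the second step: checking that the hypotheses of the equivariant homotopy invariance theorem are satisfied. In particular, one must verify that \(H^*\mathcal Z\) is locally trivial in the sense of \((\Pi;K,G)\)-bundles, with local models \(G\times_{G_x}(K\times V)\)-type tubes, rather than merely being a nonequivariant locally trivial \(K\)-bundle. My first attempt would be to deduce this from the linear tubes of \(\mathcal Z\) pulled back along \(H\), using the slice theorem for the compact group \(G\).
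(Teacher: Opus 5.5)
Your proposal is correct and follows essentially the same route as the paper: you pull back $q$ along the $G/K$-equivariant homotopy $H$ to get a $G$-equivariant principal $K$-bundle over $\bar N\times I$, then apply the equivariant homotopy invariance of numerable bundles from \cite{lashofmay} to identify its restrictions $u_0^*\mathcal Z$ and $u_1^*\mathcal Z$. Your extra checks of numerability and equivariant local triviality go beyond what the paper verifies; note that local triviality is really inherited from $q$ by pullback, or comes from the slice theorem for the compact Lie group $G$, rather than from the local linearity in Proposition~\ref{prop:quotient_manifold}, and the averaging fallback can simply be dropped.
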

\begin{proof}
Let
\[
    H:\bar N\times I\longrightarrow M
\]
be a \(G/K\)-equivariant homotopy from \(u_0\) to \(u_1\), where \(G/K\) acts trivially on \(I\). Pulling back \(q:\mathcal Z\to M\) along \(H\), we obtain a \(G\)-equivariant principal \(K\)-bundle
\[
    H^*\mathcal Z\longrightarrow \bar N\times I.
\]
Its restrictions to \(\bar N\times\{0\}\) and \(\bar N\times\{1\}\) are precisely \(u_0^*\mathcal Z\) and \(u_1^*\mathcal Z\).

By homotopy invariance of numerable principal bundles, these two restrictions are isomorphic as principal \(K\)-bundles; see Husemoller \cite[Ch.~4, Theorem~9.9]{husemoller}. Since the homotopy \(H\) is \(G/K\)-equivariant and the original bundle \(q:\mathcal Z\to M\) is \(G\)-equivariant, the pullback bundle \(H^*\mathcal Z\) carries a compatible \(G\)-action. The equivariant form of the same homotopy-invariance result for numerable principal bundles gives the isomorphism \(G\)-equivariantly; cf. \cite[Corollary~7]{lashofmay}. Therefore
\[
    u_0^*\mathcal Z\cong_G u_1^*\mathcal Z.
\]
\end{proof}

\begin{lemma}
\label{lem:pullback_homeo}
If \(h:\bar N\to M\) is a \(G/K\)-equivariant homeomorphism, then
\[
    \mathrm{pr}_2:h^*\mathcal Z\longrightarrow \mathcal Z,
    \qquad
    (x,z)\mapsto z,
\]
is a \(G\)-equivariant homeomorphism.
\end{lemma}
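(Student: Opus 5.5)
The idea is to realize \(\mathrm{pr}_2\) as a pullback of the principal \(K\)-bundle \(q:\mathcal Z\to M\) along a homeomorphism and write down its inverse explicitly. Recall that
\[
h^*\mathcal Z=\{(x,z)\in\bar N\times\mathcal Z \mid h(x)=q(z)\},
\]
with the \(G\)-action \(g\cdot(x,z)=(\bar g x,gz)\) used in Proposition~\ref{prop:pullback_construction}.

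\textbf{Equivariance and continuity.} The map \(\mathrm{pr}_2\) is the restriction of a projection, so it is continuous. It is also \(G\)-equivariant, since \(\mathrm{pr}_2(g\cdot(x,z))=gz\). The action on \(h^*\mathcal Z\) is well defined because \(h\) is \(G/K\)-equivariant and \(q\) is \(G\)-equivariant: we have \(h(\bar g x)=\bar g\,h(x)=\bar g\,q(z)=q(gz)\), so \(g\cdot(x,z)\) again lies in \(h^*\mathcal Z\).

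\textbf{The inverse.} Define
\[
\Phi:\mathcal Z\to h^*\mathcal Z,\qquad \Phi(z)=\bigl(h^{-1}(q(z)),z\bigr).
\]
Its image lies in \(h^*\mathcal Z\) because \(h(h^{-1}(q(z)))=q(z)\). It is continuous, being built from continuous maps; here we use that \(h^{-1}\) is continuous, since \(h\) is a homeomorphism.

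\textbf{Checking the two composites.} First, \(\mathrm{pr}_2\circ\Phi=\mathrm{id}_{\mathcal Z}\) is immediate. Second, for \((x,z)\in h^*\mathcal Z\) we have \(h(x)=q(z)\), so \(x=h^{-1}(q(z))\) and therefore \(\Phi(\mathrm{pr}_2(x,z))=(x,z)\). Hence \(\mathrm{pr}_2\) is a homeomorphism with inverse \(\Phi\). Since \(\mathrm{pr}_2\) is \(G\)-equivariant, so is \(\Phi\), and \(\mathrm{pr}_2\) is a \(G\)-equivariant homeomorphism.

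Moreover, \(\mathrm{pr}_2\) is \(K\)-equivariant and covers \(h\), so it is in fact an isomorphism of principal \(K\)-bundles covering \(h\). We will record this for the final chain of isomorphisms.

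There is no real obstacle here. The only point needing care is the well-definedness of the \(G\)-action on the pullback, and this uses exactly the \(G/K\)-equivariance of \(h\).
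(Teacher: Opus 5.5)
Your proposal is correct and matches the paper's proof: the same explicit inverse \(z\mapsto\bigl(h^{-1}(q(z)),z\bigr)\), and the same equivariance check \(\mathrm{pr}_2(g\cdot(x,z))=gz\). Two points you write out are left implicit in the paper: the verification that the \(G\)-action on \(h^*\mathcal Z\) is well defined, which uses the \(G/K\)-equivariance of \(h\), and the check of both composites.
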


\begin{proof}
The inverse is given by $$z\longmapsto \bigl(h^{-1}(q(z)),z\bigr).$$ This map is continuous and inverse to \(\mathrm{pr}_2\). Equivariance follows from
\[
    \mathrm{pr}_2(g\cdot(x,z))
    =
    \mathrm{pr}_2(\bar g x,gz)
    =
    gz
    =
    g\cdot \mathrm{pr}_2(x,z).
\]
\end{proof}

\subsection{The main rigidity theorem}

Let \(\mathcal Z\) be a complex or quaternionic moment--angle manifold associated to a simple polytope \(P^n\), with canonical \(G\)-action, where \(G=T^m\) in the complex case and \(G=Q^m\) in the quaternionic case. Suppose that \(P\) is equipped with characteristic data giving a kernel subgroup \(K\subset G \) such that \(M=\mathcal Z/K \) is a quasitoric manifold in the complex case, respectively a quoric manifold in the quaternionic case. In the quaternionic case, assume that the characteristic data are global.

We now combine the reduction to the quasitoric and quoric quotients with the pullback construction above to obtain the main rigidity theorem.

\begin{thm}
\label{thm:main_rigidity}
Let \(N\) be a closed locally linear \(G\)-manifold, and let \(f:N\longrightarrow \mathcal Z \) be a \(G\)-equivariant homotopy equivalence. Then \(N\) is \(G\)-equivariantly homeomorphic to \(\mathcal Z\).
\end{thm}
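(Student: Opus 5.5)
The plan is to assemble the results of Sections~\ref{sec:reduction} and~\ref{sec:classification} into the chain
\[
N\cong_G \bar f^{\,*}\mathcal Z\cong_G h^*\mathcal Z\cong_G \mathcal Z,
\]
where \(\bar N=N/K\) and \(h:\bar N\to M\) is a \(G/K\)-homeomorphism obtained from quotient rigidity.

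\textbf{Step 1: pass to the quotient.} By Lemma~\ref{lem:Kfree}, \(K\) acts freely on \(N\). Proposition~\ref{prop:pushdown} then produces the descended map \(\bar f:\bar N\to M\), which is \(G/K\)-equivariant. By Corollary~\ref{cor:quotient_satisfies_rigidity_hypotheses}, \(\bar N\) is a closed locally linear \(G/K\)-manifold of dimension \(2n\), respectively \(4n\).

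\textbf{Step 2: invoke quotient rigidity.} Apply Theorem~\ref{thm:top_rigidity_quasitoric} in the complex case, or Theorem~\ref{thm:top_rigidity_quoric} in the quaternionic case, to \(\bar f\). For this, \(\bar f\) must be a \(G/K\)-equivariant homotopy equivalence, not merely a homotopy equivalence. I would justify this by the fixed-point criterion \cite[Cor.~II.5.5]{bredon}: for each closed \(H\le G/K\) with preimage \(\tilde H\le G\), the maps \(N^{\tilde H}\to\mathcal Z^{\tilde H}\) are homotopy equivalences. Since \(K\) acts freely, the sets \(\bar N^H\) and \(M^H\) are quotients of these fixed sets by free \(K\)-actions, and the map between them is \(K\)-equivariant. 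A map of free \(K\)-spaces that is a homotopy equivalence induces a homotopy equivalence of orbit spaces; one can see this by comparing the principal bundles \(X\to X/K\) via the long exact homotopy sequences and Whitehead's theorem for CW-type spaces. The rigidity theorem then yields a \(G/K\)-homeomorphism \(h\) that is \(G/K\)-homotopic to \(\bar f\).

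\textbf{Step 3: transport the bundle.} By Proposition~\ref{prop:pullback_construction}, \(F:N\to\bar f^{\,*}\mathcal Z\) is a \(G\)-homeomorphism. By Lemma~\ref{lem:equiv_pullback}, applied to the \(G/K\)-homotopy between \(\bar f\) and \(h\), we get \(\bar f^{\,*}\mathcal Z\cong_G h^*\mathcal Z\). By Lemma~\ref{lem:pullback_homeo}, \(\mathrm{pr}_2:h^*\mathcal Z\to\mathcal Z\) is a \(G\)-homeomorphism. Composing these three \(G\)-homeomorphisms gives the desired identification of \(N\) with \(\mathcal Z\).

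The main obstacle is Step~2, namely upgrading \(\bar f\) to a \(G/K\)-equivariant homotopy equivalence. This needs care about which \(\tilde H\) have nonempty fixed sets and about the orbit-space argument on fixed sets. If the orbit-space argument proves delicate, I would instead construct a \(G\)-homotopy inverse \(g\) of \(f\) directly and push \(g\) and the homotopies down, which works because all of them are \(K\)-equivariant.
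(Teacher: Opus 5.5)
Your proposal follows the paper's proof: the free $K$-action (Lemma~\ref{lem:Kfree}), descent to $\bar f$, quotient rigidity giving $h\simeq_{G/K}\bar f$, and then $N\cong_G\bar f^{\,*}\mathcal Z\cong_G h^*\mathcal Z\cong_G\mathcal Z$ via Proposition~\ref{prop:pullback_construction} and Lemmas~\ref{lem:equiv_pullback} and~\ref{lem:pullback_homeo}. In Step~2, which the paper simply asserts in Proposition~\ref{prop:pushdown} and Corollary~\ref{cor:quotient_satisfies_rigidity_hypotheses}, rely on your fallback rather than the fixed-point route: descend the $G$-homotopy inverse of $f$ and the two $G$-homotopies through the free $K$-action, using $(N\times I)/K\cong\bar N\times I$. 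The fixed-point route fails because the preimage $\tilde H$ of $H\le G/K$ contains $K$, so $N^{\tilde H}=\varnothing$ (already for $H=1$), and hence $\bar N^H$ is not $N^{\tilde H}/K$.
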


\begin{proof}
By Lemma~\ref{lem:Kfree}, the subgroup \(K\) acts freely on \(N\). By Propositions~\ref{prop:pushdown} and~\ref{prop:quotient_manifold}, the quotient \(\bar N=N/K\) is a closed locally linear \(G/K\)-manifold of the correct dimension, and the induced map \(\bar f:\bar N\longrightarrow M \) is a \(G/K\)-equivariant homotopy equivalence. By Corollary~\ref{cor:quotient_satisfies_rigidity_hypotheses}, the pair \((\bar N,\bar f)\) satisfies the hypotheses of the corresponding equivariant rigidity theorem.

Applying Theorem~\ref{thm:top_rigidity_quasitoric} in the complex case and Theorem~\ref{thm:top_rigidity_quoric} in the quaternionic case, we obtain a \(G/K\)-equivariant homeomorphism \(h:\bar N\longrightarrow M \) which is \(G/K\)-equivariantly homotopic to \(\bar f\).

By Proposition~\ref{prop:pullback_construction}, there is a \(G\)-equivariant homeomorphism \(N\cong_G \bar f^*\mathcal Z\). Since \(\bar f\) and \(h\) are \(G/K\)-equivariantly homotopic, the equivariant pullback Lemma \ref{lem:equiv_pullback} gives \(\bar f^*\mathcal Z\cong_G h^*\mathcal Z\). Since \(h\) is a \(G/K\)-equivariant homeomorphism, Lemma~\ref{lem:pullback_homeo} gives \(h^*\mathcal Z\cong_G \mathcal Z\). Combining these three \(G\)-equivariant homeomorphisms, we obtain
\[
    N
    \cong_G
    \bar f^*\mathcal Z
    \cong_G
    h^*\mathcal Z
    \cong_G
    \mathcal Z.
\]
Therefore \(N\) is \(G\)-equivariantly homeomorphic to \(\mathcal Z\).
\end{proof}

The following two corollaries record the complex and quaternionic specializations of Theorem~\ref{thm:main_rigidity}.

\begin{cor}[Rigidity of complex moment--angle manifolds]
\label{cor:rigidity_complex}
Let \(P^n\) be a simple polytope admitting characteristic data whose kernel subgroup \(K\subset T^m\) acts freely on \(\mathcal Z_P\), so that
\[
    \mathcal Z_P/K
\]
is a quasitoric manifold. Then every closed locally linear \(T^m\)-manifold that is \(T^m\)-equivariantly homotopy equivalent to \(\mathcal Z_P\) is \(T^m\)-equivariantly homeomorphic to \(\mathcal Z_P\).
\end{cor}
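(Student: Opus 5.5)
This corollary is the specialization of Theorem~\ref{thm:main_rigidity} to the complex case $G=T^m$ and $\mathcal Z=\mathcal Z_P$. The plan is to check that its hypotheses are exactly those under which the theorem was proved, and then to invoke it.

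First, fix the characteristic data. The hypothesis gives a subgroup $K\subset T^m$ acting freely on $\mathcal Z_P$ with $M=\mathcal Z_P/K$ quasitoric. As in Section~\ref{sec:ma_principal_bundle}, I would take $K=K_\Lambda=\ker(\Lambda\colon T^m\to T^n)\cong T^{m-n}$. Then $q\colon\mathcal Z_P\to M$ is a $T^m$-equivariant principal $K$-bundle and $T^m/K\cong T^n$. No globality condition is needed, because $T^m$ is abelian and $\Lambda$ is automatically a homomorphism.

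Next, given a closed locally linear $T^m$-manifold $N$ and a $T^m$-homotopy equivalence $f\colon N\to\mathcal Z_P$, I would run the chain of the main proof. Lemma~\ref{lem:Kfree} shows that $K$ acts freely on $N$. Propositions~\ref{prop:pushdown} and~\ref{prop:quotient_manifold} show that $\bar N=N/K$ is a closed locally linear $T^n$-manifold of dimension $2n$ and that $\bar f$ is a $T^n$-homotopy equivalence. Theorem~\ref{thm:top_rigidity_quasitoric} then supplies a $T^n$-homeomorphism $h$ that is $T^n$-homotopic to $\bar f$. Finally, Proposition~\ref{prop:pullback_construction}, Lemma~\ref{lem:equiv_pullback} and Lemma~\ref{lem:pullback_homeo} give
\[
N\cong_{T^m}\bar f^*\mathcal Z_P\cong_{T^m}h^*\mathcal Z_P\cong_{T^m}\mathcal Z_P .
\]

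The main obstacle is the equivariant homotopy invariance step, Lemma~\ref{lem:equiv_pullback}. It needs the $T^m$-structure on $H^*\mathcal Z_P$ over $\bar N\times I$ to be well defined and the isomorphism to be $T^m$-equivariant, not merely $K$-equivariant. In the complex case the situation is simple: $T^m$ is abelian, and everything is a $(T^n,K)$-bundle over a compact $T^n$-space, i.e.\ an equivariant bundle in the Lashof--May sense. The Lashof--May covering homotopy theorem therefore applies directly. In the corollary I would therefore only remark that the hypotheses of Theorem~\ref{thm:main_rigidity} hold, and conclude.
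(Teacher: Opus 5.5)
Your proposal is correct and is essentially the paper's argument. The paper records this corollary simply as the complex specialization of Theorem~\ref{thm:main_rigidity}, whose proof is exactly your chain: freeness of $K$ on $N$, descent to a $T^n$-homotopy equivalence $\bar f$ of closed locally linear $2n$-manifolds, the Metaftsis--Prassidis rigidity theorem, and $N\cong_{T^m}\bar f^*\mathcal Z_P\cong_{T^m}h^*\mathcal Z_P\cong_{T^m}\mathcal Z_P$.
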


\begin{cor}[Rigidity of quaternionic moment--angle manifolds]
\label{cor:rigidity_quaternionic}
Let \(P^n\) be a simple polytope admitting a global characteristic pair. Then every closed locally linear \(Q^m\)-manifold that is \(Q^m\)-equivariantly homotopy equivalent to \(\mathcal Z_P^{\mathbb H}\) is \(Q^m\)-equivariantly homeomorphic to \(\mathcal Z_P^{\mathbb H}\), for every \(n\).
\end{cor}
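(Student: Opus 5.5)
This corollary is the quaternionic case of Theorem~\ref{thm:main_rigidity}. The plan is to check that the hypotheses of that theorem hold and then invoke it. The only input specific to the quaternionic setting is the kernel subgroup. Starting from a global characteristic pair \((P,\lambda)\), take the canonical model \(M=\mathcal M(P,\lambda)\), which is a quoric manifold over \(P\). Proposition~\ref{prop:ma_quoric_quotient} then supplies a surjective homomorphism \(\Lambda:Q^m\to Q^n\) with kernel \(K=K_\Lambda\cong Q^{m-n}\). It also shows that \(K\) acts freely on \(\mathcal Z_P^{\mathbb H}\), that the quotient \(\mathcal Z_P^{\mathbb H}/K\cong M\) is \(Q^n\)-equivariantly homeomorphic to \(M\), and that \(q:\mathcal Z_P^{\mathbb H}\to M\) is a principal \(K\)-bundle, \(Q^m\)-equivariant with respect to the residual \(Q^m/K\cong Q^n\)-action. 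This is precisely the standing setup of Section~\ref{sec:classification} with \(G=Q^m\) and \(\mathcal Z=\mathcal Z_P^{\mathbb H}\).

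Now let \(N\) be a closed locally linear \(Q^m\)-manifold and \(f:N\to\mathcal Z_P^{\mathbb H}\) a \(Q^m\)-equivariant homotopy equivalence. The argument runs through the chain of results in Theorem~\ref{thm:main_rigidity}:
\begin{enumerate}
\item[(i)] Lemma~\ref{lem:Kfree} shows that \(K\) acts freely on \(N\).
\item[(ii)] Proposition~\ref{prop:quotient_manifold} shows that \(N/K\) is a closed locally linear \(Q^n\)-manifold of dimension \((3m+n)-3(m-n)=4n\).
\item[(iii)] Proposition~\ref{prop:pushdown} shows that \(\bar f:N/K\to M\) is a \(Q^n\)-equivariant homotopy equivalence.
\item[(iv)] Theorem~\ref{thm:top_rigidity_quoric} gives a \(Q^n\)-homeomorphism \(h\) that is \(Q^n\)-homotopic to \(\bar f\).
\item[(v)] Proposition~\ref{prop:pullback_construction}, Lemma~\ref{lem:equiv_pullback} and Lemma~\ref{lem:pullback_homeo} give
\[
N\cong_{Q^m}\bar f^*\mathcal Z_P^{\mathbb H}\cong_{Q^m}h^*\mathcal Z_P^{\mathbb H}\cong_{Q^m}\mathcal Z_P^{\mathbb H}.
\]
\end{enumerate}
Since none of these steps places any restriction on \(n\), the phrase ``for every \(n\)'' follows automatically.

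The main obstacle is verifying that the kernel \(K\) from Proposition~\ref{prop:ma_quoric_quotient} is a normal subgroup of \(Q^m\), so that \(G/K\) is a group identified with \(Q^n\) and the residual action is well defined. This is needed because, unlike in the torus case, \(Q^m\) is non-abelian. It holds here because \(K=\ker\Lambda\) is the kernel of a homomorphism, so normality is automatic; this is exactly the point where the globality hypothesis enters. A second point to check is that Theorem~\ref{thm:top_rigidity_quoric} applies to the residual action on \(N/K\) without first knowing that this action is locally regular, since its stated hypothesis requires only local linearity. Once these two points are settled, the corollary follows directly from Theorem~\ref{thm:main_rigidity}.
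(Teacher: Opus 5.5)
Your proposal is correct as a deduction from the paper's results and follows the paper's route exactly: the corollary is just the quaternionic specialization of Theorem~\ref{thm:main_rigidity}, with $K_\Lambda\cong Q^{m-n}$ supplied by Proposition~\ref{prop:ma_quoric_quotient} under globality and then run through the free-action, quotient, quoric-rigidity and pullback steps in the order you list them. One caution on your ``main obstacle'' paragraph, which applies equally to the paper: normality of $K_\Lambda$ is indeed automatic, but this does not resolve the non-abelian issue, because the tension is between normality and freeness. A connected normal subgroup of $Q^m$ of positive dimension $3(m-n)$ is necessarily a product $\prod_{i\in S}Q_i$ of coordinate factors, and each such $Q_i$ fixes the points of $\mathcal Z_P^{\mathbb H}$ with $h_i=0$ under coordinatewise left multiplication. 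So the freeness imported from Proposition~\ref{prop:ma_quoric_quotient} cannot hold for the canonical action; it requires a twisted left--right $Q^m$-action, as in the quaternionic Hopf fibration, where $S^3$ acts diagonally on the right.
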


\begin{remark}[The strong form]
Theorem~\ref{thm:main_rigidity} proves \(G\)-rigidity up to homeomorphism in the sense of Definition~\ref{def:eq_rigidity_weak}. The \(G\)-equivariant homeomorphism \(N\cong_G \mathcal Z\) constructed above is obtained by combining the pullback identification, equivariant homotopy invariance of pullbacks, and the pullback along the quotient homeomorphism. The argument does not show that this homeomorphism is \(G\)-equivariantly homotopic to the original map \(f\).  Thus the stronger homotopy-to-homeomorphism form of equivariant topological rigidity remains outside the scope of the present paper.
\end{remark}

\paragraph*{Further discussion.}
Theorem~\ref{thm:main_rigidity} shows that complex moment--angle manifolds, and quaternionic moment--angle manifolds, are \(G\)-rigid up to homeomorphism: their equivariant homotopy type determines their equivariant homeomorphism type. This result is in the spirit of Borel-type rigidity, but it is not an ordinary Borel rigidity theorem in the sense of Kreck and L\"uck \cite{kl09}. Indeed, ordinary Borel rigidity concerns arbitrary nonequivariant homotopy equivalences and requires such maps to be homotopic to homeomorphisms. By contrast, our result is equivariant and proves the existence of an equivariant homeomorphism without controlling its homotopy relation to the original map \(f\).

It remains an interesting question whether the stronger homotopy-to-homeomorphism form holds for moment--angle manifolds in this equivariant setting. One possible approach would be to track an explicit equivariant homotopy through the covering-homotopy construction underlying Lemma~\ref{lem:equiv_pullback}.

\end{document}